\documentclass[]{amsart}
\usepackage[a4paper, total={7in, 8in}]{geometry}

\usepackage{algpseudocode}

\usepackage{todonotes}
\usepackage{thm-restate}

\usepackage[maxbibnames=99, style = alphabetic]{biblatex}

\usepackage[]{amsfonts}
\usepackage{color}
\usepackage[]{amsmath}
\usepackage{amssymb}
\usepackage[]{amsthm}
\usepackage{graphicx}
\usepackage{enumerate}
\usepackage{hyperref}
\usepackage{thmtools}
\usepackage{tikz}
\usepackage{subcaption}

\title{Combinatorial interpretation of the coefficients of the order polynomial of fence posets}

\author[Yakob Kahane]{Yakob Kahane}
\address[Y.~Kahane]{LACIM, Université du Québec à Montréal, Canada}
\email{kahane.yakob@courrier.uqam.ca}

\newtheorem{theorem}{Theorem}[section]
\newtheorem*{theorem*}{Theorem}
\newtheorem{lemma}[theorem]{Lemma}
\newtheorem*{lemma*}{Lemma}

\newtheorem*{corollary*}{Corollary}
\newtheorem{conjecture}[theorem]{Conjecture}
\newtheorem*{conjecture*}{Conjecture}
\newtheorem{proposition}[theorem]{Proposition}
\newtheorem*{proposition*}{Proposition}

\newtheorem{remark}[theorem]{Remark}

\newcommand{\new}[1]{\textit{\textbf{\color{blue}{#1}}}}

\theoremstyle{definition}
\newtheorem{definition}[theorem]{Definition}

\numberwithin{equation}{section}

\begin{document}

\begin{abstract}
Given a fence poset $P$, we define a new statistic on permutations, denoted by $bl_P$, that provides a combinatorial interpretation of the coefficients of the order polynomial of $P$, answering a question of Ferroni, Morales, and Panova (2025). Using the fact that the base polytope of a lattice path matroid can be decomposed into order polytopes of fence posets, we also obtain a combinatorial interpretation of the coefficients of the Ehrhart polynomial of the base polytope of Schubert matroids, answering a question of Stanley (1999). As an application of this statistic, we establish the first nontrivial lower bound for the linear coefficient of the Ehrhart polynomial of an order polytope. Finally, we conjecture generalizations of this statistic to skew-shape posets and circular fence posets.
\end{abstract}

\maketitle

\section{Introduction}
Let $P$ be a poset of size $n$. We are interested in the coefficients of the order polynomial  
\[
\Omega(P;t)=\sum_{k=1}^{n} c_k(P)\, t^k .
\]
This polynomial encodes substantial information about $P$. In particular, the leading coefficient of $n!\cdot \Omega(P;t)$ is the number of linear extensions of $P$, denoted $e(P)$ (see \cite[Ch.~3]{EC2}), a central and notoriously difficult to compute invariant \cite{brigthwellwrinckler}.  

From a geometric viewpoint, following Stanley \cite{stanley1986two}, we define the \emph{order polytope}
\[
\mathcal{O}(P):=\{ f\in [0,1]^P : f(u)\le f(v)\ \text{whenever }u\preceq v \text{ in } P\}.
\]
The Ehrhart polynomial of $\mathcal{O}(P)$ is equal to $\Omega(P;t+1)$. It is well known that the coefficients $a_k := n!\, c_k(P)$ are integers, though they are not always nonnegative. Determining when these coefficients are positive remains an open problem \cite{liu2019positivity}.A recent result of Ferroni, Morales, and Panova \cite{ferroni2025skew} shows that to prove nonnegativity of all coefficients, it suffices to establish positivity of the linear coefficient of the order polynomial of all convex subposets of~$P$. Kahane \cite{kahane2025yoshida} extended their result and expressed the coefficient $c_k$ as the sum of products of linear coefficient of order polynomial of subposets rediscovering a result of Shareshian, Wright and Zhao \cite{shareshian2003newapproachorderpolynomials}. 
For fence posets, skew shape posets and circular fence posets (see Figure \ref{fig:example_posets} for some examples), all the coefficients $a_k$ are non-negative integers \cite{ferroni2025skew}, and $\sum_{k=0}^{n} a_k = n! \Omega(1) = n!$. Ferroni, Morales and Panova \cite{ferroni2025skew} asked whether we can interpret the coefficients $a_k$ as a statistic on the group of permutation $S_n$.

\begin{figure}[ht]
    \centering
    \includegraphics[width=0.5\linewidth]{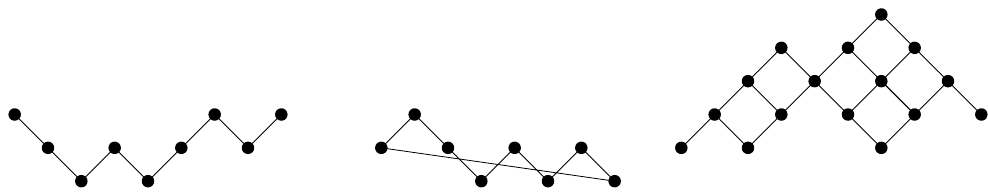}
    \caption{The fence poset of size $9$ with descents at position $\{2,3,5,8\}$, the circular fence poset of size $8$ with descents at position $\{3,4,6,8\}$ and the skew shape poset $P_{\lambda /\mu}$ with $\lambda /\mu = 544221/11$.}
    \label{fig:example_posets}
\end{figure}

We construct a statistics inspired by decompositions into convex subposet to define new candidate statistics which could give an interpretation of the $a_k = n!c_k$. This approach gives positive results for fence posets. Given a fence poset $P$ and a permutation, we introduced a statistic called $bl_P(\sigma)$ in order to obtain 

\begin{theorem*}[\ref{thm: stat_fence_equiv}]
    Let $P$ be a fence poset of size $n$, then 
    \[
    n! \, \Omega(P,t) = \sum_{\sigma \in S_n}t^{bl_P(\sigma)}. 
    \]
\end{theorem*}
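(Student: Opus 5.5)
The plan is to induct on the size $n$ of the fence, matching a recursion for $\Omega(P;t)$ against a recursion for the generating function of $bl_P$. Since the paper does not yet state the definition of $bl_P$, I will assume the natural construction suggested in the introduction. Reading $\sigma$ as a labeling of the elements of $P$, we cut the fence into \emph{blocks}, which are maximal convex segments on which $\sigma$ satisfies a compatibility condition. Then $bl_P(\sigma)$ counts these blocks. The identity will follow from a decomposition of $\Omega(P;t)$ into products of linear coefficients of convex subposets, in the style of \cite{kahane2025yoshida} and \cite{shareshian2003newapproachorderpolynomials}.

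\textbf{Step 1 (algebraic side).} By the result recalled in the introduction, $c_k(P)$ is a sum over ordered set partitions of $P$ into $k$ convex pieces, of products of linear coefficients $c_1(Q)$ of the pieces. For a fence, convex subposets that are connected are subfences (intervals of positions). So I first restrict the decomposition to partitions of $\{1,\dots,n\}$ into intervals, with suitable multiplicities, and obtain
\[
n!\,c_k(P)=\sum_{\substack{\text{decompositions } P=Q_1\sqcup\cdots\sqcup Q_k}} \binom{n}{|Q_1|,\dots,|Q_k|}\prod_i |Q_i|!\,c_1(Q_i).
\]

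\textbf{Step 2 (linear coefficient).} Next I show that $m!\,c_1(Q)$ equals the number of permutations of $S_m$ for which $Q$ forms a single block, i.e.\ $bl_Q(\sigma)=1$. Here $m!\,c_1(Q)$ is, up to sign, the Ehrhart-type quantity $m!\,\Omega'(Q;0)$, which can be written as a signed sum over linear extensions. I would prove the claim by induction on the length of the fence. Removing an endpoint of the fence gives a recursion for $\Omega(Q;t)$: the endpoint contributes $\sum_{j\le f(\text{neighbor})}$ or $\sum_{j\ge\cdots}$. Differentiating at $t=0$ yields a recursion for $c_1$, and I check that the count of one-block permutations satisfies the same recursion.

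\textbf{Step 3 (combinatorial side).} Finally, I give a bijection. A permutation $\sigma\in S_n$ with $bl_P(\sigma)=k$ should correspond to three pieces of data:
\begin{itemize}
\item a choice of the blocks $Q_1,\dots,Q_k$;
\item a distribution of the value set among the blocks, which accounts for the multinomial;
\item a one-block standardized permutation on each $Q_i$.
\end{itemize}
Summing over $k$ then gives $\sum_\sigma t^{bl_P(\sigma)}=n!\,\Omega(P;t)$. As a sanity check, setting $t=1$ recovers $n!\,\Omega(P;1)=n!$.

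\textbf{Main obstacle.} I expect the hardest part to be making the block decomposition \emph{canonical}, so that the map in Step 3 is a genuine bijection. Blocks must be determined greedily, for instance left to right along the fence, so that each $\sigma$ has a unique decomposition. The greedy rule must also be compatible with the ordering used in the convex-subposet expansion of Step 1. My first attempt would be to define blocks by scanning positions and starting a new block exactly when the relative order of $\sigma$ at a cover relation disagrees with the one-block condition, and then to verify uniqueness through the recursion of Step 2.
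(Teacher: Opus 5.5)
\textbf{There is a genuine gap.} Your Steps 1 and 3 try to match permutations with $bl_P(\sigma)=k$ to convex decompositions \emph{one decomposition at a time}, and no such matching exists.

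Your Step 1 formula simplifies, via $\binom{n}{|Q_1|,\dots,|Q_k|}\prod_i|Q_i|!\,c_1(Q_i)=n!\prod_i c_1(Q_i)$, to $c_k(P)=\sum\prod_i c_1(Q_i)$. This drops the weight $e(\mu)/k!$ of Theorem~\ref{thm:formulmain}. Step 3 makes the matching mistake on the combinatorial side: it forgets that a block partition must be \emph{valid}. Whenever $B_i<B_j$, the roots must satisfy $\sigma(x_i)<\sigma(x_j)$, so values cannot be distributed freely among the blocks. This constraint also interacts with the within-block conditions, which fix the rank of each root inside its block. Hence the number of $\sigma$ with a prescribed block partition is not a product.

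Concretely, take $P=\{z_0\prec z_1\succ z_2\}$, for which $3!\,\Omega(P;t)=2t^3+3t^2+t$. For $\mu=\{z_0\}\sqcup\{z_1,z_2\}$ your triple count is $\binom{3}{1,2}=3$. But only $\sigma=(1,2,3)$ has this block partition: $(2,1,3)$ forms a single block, and $(3,1,2)$ gives $\{z_0,z_1\}\sqcup\{z_2\}$. The true counts for the two interval decompositions are $1$ and $2$. The correct weights $\frac{3!}{2!}\,e(\mu)\prod c_1(P_i)$ are $\frac32$ and $\frac32$. Since these are not integers, no choice of ``suitable multiplicities'' makes the expansion bijective term by term; the identity holds only after summing over $\mu$.

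Your Step 2 is correct (it is Lemma~\ref{lem:c1_fence}: the root must have rank exactly $r$ in its block), but it cannot carry the argument. Note also that the greedy rule must compare each new label with the \emph{root} of the current block, not with its neighbour. For a chain, the root rule gives left-to-right maxima, with distribution $t(t+1)\cdots(t+n-1)=n!\,\Omega$. Neighbour comparisons give Eulerian numbers instead, which already differ at $n=3$.

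\textbf{The missing idea} is to abandon any termwise correspondence and compare recursions, as the paper does. Conditioning on the position of the largest label $n$ gives Lemma~\ref{lem:rec_a}. If $n$ sits at a descent, it can be deleted without changing $bl_P$. If it sits at an ascent $i$, it is a root whose block ends just before $f(i)$. Together these give $nA(P)=\sum_{i\in\mathrm{desc}(P)}A(P_{/i})+\sum_{i\in\mathrm{asc}(P)}A(P_{[0,i-1]})\,t\,A(P_{[f(i),n-1]})$.

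On the other side, Theorem~\ref{thm:rec_omega} expresses $\frac{d}{dt}\Omega(P;t)$ as a sum over connected filters $P_{[k,\ell-1]}$ of $\Omega(P_{[0,k-1]})\,c_1(P_{[k,\ell-1]})\,\Omega(P_{[\ell,n-1]})$. The paper differentiates the first recursion and substitutes the induction hypothesis together with the second recursion. It then cancels terms using Lemma~\ref{lem:c_1_to_desc} and Lemma~\ref{lem:magic}, arriving at $\frac{d}{dt}\bigl(nA(P)\bigr)=\frac{d}{dt}\bigl(n\Omega(P)\bigr)$. Since both polynomials vanish at $t=0$, this gives $A=\Omega$.
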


Using the same ideas we also introduced analogs for this statistics, $\widetilde{bl}_P(\sigma)$ for skew posets and $\widehat{bl}_P(\sigma)$ for circular fence posets. We conjectured the two following statement 

\begin{conjecture*}[\ref{conj:skew shape stat}]
	Let $P$ be a skew shape poset of size $n$. Then
	\[
	n! \, \Omega(P;t) = \sum_{\sigma \in S_n(P)} t^{\widetilde{bl}_P(\sigma)},
	\]
\end{conjecture*}
and
\begin{conjecture*}[\ref{conj:circular_poset_stat}]
let $P$ be a circular fence poset of size $n$, then 
    \[
    n! \; \Omega(P; t) = \sum_{\sigma \in S_n(P)} t^{\widehat{bl}_P(\sigma)}. 
    \]
\end{conjecture*}

Recently, Ferroni, Morales, and Panova \cite{ferroni2026ehrhartpositivitylatticepath} proved the positivity of the coefficients of the Ehrhart polynomial of the base polytope of lattice path matroids. Their proof relies on a decomposition of the Ehrhart polynomial into a sum of order polynomials of certain fence posets. We reformulate their result by expressing the Ehrhart polynomial of the base polytope of a lattice path matroid as a sum of polynomials indexed by the elements of the matroid, using the \new{shade} of a lattice path (see Figure \ref{fig:shade_example} for an illustration).

\begin{figure}
\centering
\includegraphics[width=0.15\linewidth]{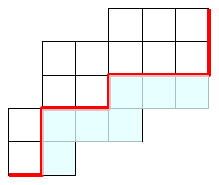}
\caption{The shade of a lattice path.}
\label{fig:shade_example}
\end{figure}

\begin{proposition*}[\ref{prop:main_lattice_path}]
Let $\lambda / \mu$ be a skew shape. Then the Ehrhart polynomial of the lattice path matroid $M(\lambda / \mu)$ can be expressed as
\[
\mathcal{E}{(M(\lambda / \mu); t)} =  \sum_{x \in M(\lambda / \mu)}\Omega(\mathsf{sh(x)}; t).
\]
\end{proposition*}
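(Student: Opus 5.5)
We will derive the statement from the decomposition of Ferroni, Morales and Panova \cite{ferroni2026ehrhartpositivitylatticepath}. That decomposition writes the Ehrhart polynomial of the base polytope of $M(\lambda/\mu)$ as a sum of order polynomials of fence posets. The plan is to reindex its summands by the bases of the matroid, which are lattice paths inside $\lambda/\mu$, and to show that each summand is the order polynomial of the shade $\mathsf{sh}(x)$ of the corresponding path.

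\textbf{Step 1 (lattice points).} First describe the lattice points of the dilate $t\,\mathcal{P}(M(\lambda/\mu))$. Each is an integer vector $y\in[0,t]^n$ with fixed coordinate sum $rt$ whose partial sums are bounded above and below by the partial sums of the two boundary paths of $\lambda/\mu$. So $y$ is a ``thick'' lattice path of multiplicity $t$, squeezed between the boundaries.

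\textbf{Step 2 (assigning a base to each point).} Next assign to each such $y$ a canonical base $x\in M(\lambda/\mu)$, for instance via a rounding rule. Choose $x$ to be the lattice path obtained by taking $\lfloor\cdot\rfloor$ of the normalized partial sums, or equivalently the lowest path weakly dominating $y/t$ in the appropriate order. This partitions the lattice points into classes indexed by bases.

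\textbf{Step 3 (counting a class).} Then show that the class of $x$ is in bijection with order-preserving maps $\mathsf{sh}(x)\to\{1,\dots,t\}$, or with the appropriate half-open variant, so that the count is exactly $\Omega(\mathsf{sh}(x);t)$. The shade of $x$ records the cells between $x$ and the boundary on the relevant side. The fence relations among those cells come from the up/down steps of $x$. These should match the inequalities between consecutive partial sums of $y$ that keep the rounding equal to $x$.

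\textbf{Step 4 (agreement with the cited decomposition).} Finally, compare with \cite{ferroni2026ehrhartpositivitylatticepath}. Their summands are indexed by certain data and are order polynomials of fences. We identify that data with bases $x$, and identify each of their fences with $\mathsf{sh}(x)$. If the empty shade contributes $\Omega(\emptyset;t)=1$, this is consistent with vertices of the polytope counted once. As a check, evaluate at $t=1$: since $\Omega(P;1)=1$ for every $P$, the right side equals the number of bases, which matches the number of lattice points of the polytope itself.

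\textbf{Main obstacle.} The hard step is Step 3: fixing the correct convention for the shade, including which side of the path and strict versus weak inequalities at its ends, so that the rounding classes are exactly counted by $\Omega(\mathsf{sh}(x);t)$ with no boundary over- or undercounting. I would first check small cases, such as the uniform matroid $U_{1,2}$ and a single rectangle, to pin down the convention. I would then prove the bijection by induction on the number of steps of the path, peeling off the last step.
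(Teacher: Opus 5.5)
You combine two routes and complete neither, so there is a genuine gap. Step 3, the direct count, is explicitly left open. Step 4, the matching with the Ferroni--Morales--Panova summands, names no map.

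\textbf{What is wrong as written.} The two rules in Step 2 are not equivalent. $\lfloor s_i(y)/t\rfloor$ gives the highest path weakly below $y/t$, while the lowest path weakly above it is $\lceil s_i(y)/t\rceil$. With the floor, the per-base claim already fails for $U_{1,2}$. The path going north then east has the single cell as its ideal, so $\Omega(\mathsf{sh}(x);t)=t$, yet its floor class is just $\{(t,0)\}$. The floor rule instead produces the rim on the other side of the path; only the totals agree.

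Nor is the shade a convention you may adjust, or ``the cells between $x$ and the boundary''. It is fixed as the rim of the ideal $I$ below $x$: the cells of $I$ whose north-west neighbour is not in $I$. Taking all cells on one side of $x$ fails by degree. For $U_{2,4}$, one of the two boundary paths would be assigned the whole $2\times 2$ square, a degree-$4$ term for a $3$-dimensional polytope.

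\textbf{The paper's route.} The map you leave out of Step 4 is the whole of the paper's proof. It sends $\gamma\setminus F$ to the smallest ideal $\phi(\gamma\setminus F)$ containing it. It then checks that $\mathsf{sh}(I)$ is one of the posets $\gamma\setminus F$, and that $\phi(\mathsf{sh}(I))=I$ because $I$ and $\mathsf{sh}(I)$ have the same maximal cells. Bijectivity comes from evaluating at $t=1$: $\Omega(P;1)=1$, and the lattice points of the polytope are exactly the bases, so the number of summands equals the number of ideals. Your ``$t=1$ check'' is precisely the step that turns injectivity into a bijection.

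\textbf{Completing your direct route.} The direct route does work, independently of Ferroni--Morales--Panova, if you round up. Step 1 is fine: the system is totally unimodular in the partial sums, so the polytope is $\{y\in[0,1]^n:\sum_i y_i=r,\ L_i\le s_i(y)\le U_i\}$. For a lattice point $y$ of the $t$-th dilate, let $x$ be the base with partial sums $\lceil s_i(y)/t\rceil$. Put $w_i=t\,s_i(x)-s_i(y)\in\{0,\dots,t-1\}$, so that $w_0=w_n=0$.
\begin{itemize}
\item The conditions $0\le y_i\le t$ become $w_{i-1}\le w_i$ at a north step of $x$ and $w_{i-1}\ge w_i$ at an east step.
\item The bound $s_i(y)\le tU_i$ is automatic.
\item The bound $s_i(y)\ge tL_i$ forces $w_i=0$ exactly when the $i$-th lattice point $p_i$ of $x$ lies on $L$.
\end{itemize}
These indices form a down-set of the fence containing $0$ and $n$. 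Indeed, if $p_i$ lies on $L$, so does $p_{i+1}$ when step $i+1$ is east, and so does $p_{i-1}$ when step $i$ is north. Hence the class of $x$ has exactly $\Omega(Q;t)$ elements, where $Q$ is the fence on the points of $x$ not on $L$.

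Finally, $p\mapsto$ (the cell with north-west corner $p$) gives $Q\cong\mathsf{sh}(x)$. That cell lies in $\lambda/\mu$ iff $p\notin L$. A north step puts the later cell directly north of the earlier one, and an east step puts the earlier cell directly west of the later one. These match the two cover relations above.

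Written out, this is a self-contained proof that needs neither Step 4 nor the cited decomposition. The paper's proof is shorter but rests entirely on that decomposition.
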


Schubert matroids form an important subclass of lattice path matroids. They arise naturally in algebraic geometry, combinatorics, and representation theory. Specializing Proposition \ref{prop:main_lattice_path} to Schubert matroids allows us to introduce a new statistic, $TrBl_P(\sigma)$, which provides a combinatorial interpretation of the order polynomial of their base polytopes.

\begin{theorem*}[\ref{thm:Ehrart_schubert}]
Let $U$ be a lattice path and let $M[U]$ be the associated Schubert matroid. Then
\[
\mathcal{E}(M[U];t) = \frac{1}{(n+1)!}\sum_{x \in M[U]} \sum_{\sigma \in S_{n+1}}  t^{TrBl_{P_x}(\sigma)}.
\]
\end{theorem*}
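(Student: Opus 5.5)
The plan is to combine Proposition~\ref{prop:main_lattice_path} with Theorem~\ref{thm: stat_fence_equiv}. A Schubert matroid $M[U]$ is the lattice path matroid $M(\lambda/\mu)$ in which the lower boundary path is the extreme path (all east steps then all north steps, i.e.\ $\mu=\emptyset$) and the upper boundary is $U$. Applying Proposition~\ref{prop:main_lattice_path} therefore gives
\[
\mathcal{E}(M[U];t)=\sum_{x\in M[U]}\Omega(\mathsf{sh}(x);t).
\]
It then suffices to show that, for each $x$, the polynomial $\Omega(\mathsf{sh}(x);t)$ equals $\frac{1}{(n+1)!}\sum_{\sigma\in S_{n+1}}t^{TrBl_{P_x}(\sigma)}$. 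Here $P_x$ is the poset attached to $x$, and $TrBl$ is the statistic to be defined from $bl$.

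The key structural step is to identify the shade of a lattice path $x$ in the Schubert case. Because the lower boundary is a hook, I expect $\mathsf{sh}(x)$ to be a fence poset, or a fence poset with some degenerate part (possibly empty or a chain). I would construct $P_x$ as a fence on $n+1$ elements, obtained by adjoining one extra element to $\mathsf{sh}(x)$, for instance a new minimum or a ``truncation'' point. I would then prove
\[
\Omega(\mathsf{sh}(x);t)=\frac{1}{(n+1)!}\cdot (n+1)!\,\Omega'(P_x;t),
\]
where $\Omega'$ is obtained from $\Omega(P_x;t)$ by a truncation. The natural candidate is division by $t$ or a shift $t\mapsto t-1$. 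It comes from the standard identity relating order polynomials of $Q$ and $Q\oplus\hat 0$:
\[
\Omega(Q\oplus \hat0;t)=\sum_{j=1}^{t}\Omega(Q;j),
\]
together with the identity $\Omega(Q;t)=\Omega(\hat0\oplus Q;t+1)-\Omega(\hat0\oplus Q;t)$. Equivalently, the order polytope of the shade is a slice of that of the fence $P_x$.

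With the identity in hand, I would apply Theorem~\ref{thm: stat_fence_equiv} to $P_x$, writing $(n+1)!\,\Omega(P_x;t)=\sum_{\sigma\in S_{n+1}}t^{bl_{P_x}(\sigma)}$. I would then define $TrBl_{P_x}(\sigma)$ as the statistic obtained by pushing the truncation operation through this generating function. Concretely, I would track how the block containing the adjoined element contributes to $bl_{P_x}(\sigma)$ and delete or shift that contribution. Checking that this is a genuine statistic, with nonnegative integer values and no cancellation, requires a bijection. The bijection should match permutations in $S_{n+1}$ according to the position of the extra element, so that the shift $t\mapsto t-1$ or the difference operator acts termwise. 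Summing over $x\in M[U]$ then gives the theorem.

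The main obstacle is this last step. A difference operator applied to $\sum_\sigma t^{bl(\sigma)}$ does not obviously produce a positive sum of monomials indexed by $S_{n+1}$ with the same normalization $(n+1)!$. My first attempt would be to use the recursive (block) definition of $bl_P$. I would show that removing the extreme element of the fence corresponds, under $\sigma\mapsto$ (the position of $n+1$ and the rest), to lowering exactly one block count by one. The truncated statistic is then $bl$ with that one block omitted.
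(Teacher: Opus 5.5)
Your first step, specializing Proposition~\ref{prop:main_lattice_path} to get $\mathcal{E}(M[U];t)=\sum_{x}\Omega(\mathsf{sh}(x);t)$, is exactly the paper's. After that, the plan rests on a misidentification and leaves its key step open.

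\textbf{What $P_x$ and $TrBl$ actually are.} In the paper, $P_x$ is not $\mathsf{sh}(x)$ with one point adjoined. It is the fence on the $n+1$ lattice points of the path $x$. Also, $TrBl_{P_x}(\sigma)$ is not yours to define: it is fixed beforehand as $bl_{Tr(P_x)}(\sigma|_{Tr(P_x)})$. Here $Tr$ deletes the leftmost element, the initial descending run and the final ascending run, which is in general many elements, not one.

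\textbf{The missing observation.} $\mathsf{sh}(x)=Tr(P_x)$ is itself a fence poset; your guess that the shade is a fence was right. So Theorem~\ref{thm: stat_fence_equiv} applies to it directly: with $Q=\mathsf{sh}(x)$ and $m=|Q|$, you get $\Omega(Q;t)=\frac{1}{m!}\sum_{\tau\in S_m}t^{bl_Q(\tau)}$. What remains is pure normalization. The statistic $bl_Q$ depends only on the relative order of the labels. Restricting $\sigma\in S_{n+1}$ to the $m$ elements of $Q\subseteq P_x$ and standardizing is an exactly $(n+1)!/m!$-to-one map onto $S_m$. Hence
\[
\frac{1}{m!}\sum_{\tau\in S_m}t^{bl_Q(\tau)}=\frac{1}{(n+1)!}\sum_{\sigma\in S_{n+1}}t^{bl_Q(\sigma|_Q)}=\frac{1}{(n+1)!}\sum_{\sigma\in S_{n+1}}t^{TrBl_{P_x}(\sigma)},
\]
and summing over $x$ finishes the proof. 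No operator on polynomials and no bijection is needed.

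\textbf{Why your route fails as formulated.} The step you flag as the main obstacle would not go through.
\begin{itemize}
\item A shift or difference operator applied to $\sum_\sigma t^{bl(\sigma)}$ does not act monomial by monomial, and you supply no bijection making it do so.
\item Your proposed ``$bl$ with one block omitted'' is neither the paper's $TrBl$ nor shown to have the right distribution.
\item Your identity is off by one. Since $\Omega(\hat{0}\oplus Q;t)=\sum_{j=1}^{t}\Omega(Q;j)$, the difference $\Omega(\hat{0}\oplus Q;t+1)-\Omega(\hat{0}\oplus Q;t)$ equals $\Omega(Q;t+1)$, not $\Omega(Q;t)$.
\item Adjoining a global minimum or maximum to a nonempty fence $Q$ yields a fence only when $Q$ is a chain. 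So Theorem~\ref{thm: stat_fence_equiv} could not be applied to $\hat{0}\oplus Q$. If you instead attach the new point at an end of the fence, the ordinal-sum identity no longer holds.
\end{itemize}
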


Uniform matroids $U_{k,n}$ are Schubert matroids corresponding to the lattice path $U$ consisting of $k$ North steps followed by $n-k$ East steps. Their base polytopes are the hypersimplices $\Delta_{k,n}$. In this case, Theorem \ref{thm:Ehrart_schubert} specializes to the following result.

\begin{proposition*}[\ref{prop:hypersimplex}]
The Ehrhart polynomial of the hypersimplex $\Delta_{k,n}$ can be expressed as
\[
\mathcal{E}(\Delta_{k,n}) = \frac{1}{(n+1)!}\sum_{P} \sum_{\sigma \in S_{n+1}} t^{TrBl_P(\sigma)},
\]
where the first sum ranges over all fence posets $P$ of size $n+1$ with exactly $k$ descents.
\end{proposition*}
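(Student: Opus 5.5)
The plan is to obtain Proposition~\ref{prop:hypersimplex} by specializing Theorem~\ref{thm:Ehrart_schubert} to the lattice path $U = N^kE^{n-k}$. Three steps are needed.

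\textbf{Step 1: identify the matroid.} The Schubert matroid $M[U]$ with $U = N^kE^{n-k}$ is the uniform matroid $U_{k,n}$. Its base polytope is the hypersimplex $\Delta_{k,n}$, so $\mathcal{E}(M[U];t)=\mathcal{E}(\Delta_{k,n};t)$. This is standard: every $k$-subset of $[n]$ corresponds to a lattice path from $(0,0)$ to $(n-k,k)$. All such paths lie weakly below (or above, depending on convention) $U$, so every lattice path is a basis.

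\textbf{Step 2: match bases to fence posets.} Theorem~\ref{thm:Ehrart_schubert} gives
\[
\mathcal{E}(\Delta_{k,n};t) = \frac{1}{(n+1)!}\sum_{x}\sum_{\sigma\in S_{n+1}} t^{TrBl_{P_x}(\sigma)},
\]
where $x$ ranges over all lattice paths with $k$ North and $n-k$ East steps. It remains to check that $x\mapsto P_x$ is a bijection onto the fence posets of size $n+1$ with exactly $k$ descents. For this I will recall the construction of $P_x$ behind Proposition~\ref{prop:main_lattice_path}, namely the shade $\mathsf{sh}(x)$ specialized to the Schubert case. The $n$ steps of $x$ should encode the $n$ covering relations of a fence on $n+1$ elements, with North steps giving descents and East steps giving ascents, or the reverse convention. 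A fence of size $n+1$ is determined by its descent set $D\subseteq[n]$, and a path with $k$ North steps corresponds to a $k$-subset. So the map is a bijection between the $\binom{n}{k}$ paths and the fences with $|D|=k$. Reindexing the outer sum by $P$ then gives the statement.

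\textbf{Main obstacle.} The substance lies in the bookkeeping of Step 2. I must verify that when $U=N^kE^{n-k}$, the shade of every $x$ is the whole fence of size $n+1$, with no truncation. The Schubert-specific truncation encoded in $TrBl$ could depend on how $x$ sits relative to $U$, and I need to confirm it is compatible with simply indexing by $P$. I will first check directly on the definition of $\mathsf{sh}(x)$ that the region bounded by $x$ and $U$ yields exactly the fence with descent set $\{i : x_i=N\}$. If the convention instead gives the complementary count, with $n-k$ descents, I will compose with the symmetry of the fence (reversal or dual poset). This is legitimate because the order polynomial is invariant under reversal and duality, and either operation sends descent count $n-k$ to $k$ as needed.

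Finally, as a sanity check I will test small cases such as $\Delta_{1,2}$ to confirm the normalization $\frac{1}{(n+1)!}$ and the count $\binom{n}{k}$ of fences.
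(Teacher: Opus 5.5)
You follow the paper's route: specialize Theorem~\ref{thm:Ehrart_schubert} to $U=N^kE^{n-k}$, identify $M[U]$ with $U_{k,n}$ and its base polytope with $\Delta_{k,n}$, and reindex the sum over bases $x$ by the fences $P_x$. However, the step you single out as the main obstacle is false, and the write-up would break down if you tried to carry it out.

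The error is that you conflate $P_x$ with the shade. In the paper, $P_x$ is the fence on the $n+1$ lattice points of $x$. By contrast, $\mathsf{sh}(x)$ is the set of cells \emph{below} $x$ whose northwest neighbour is not below $x$; it is not the region between $x$ and $U$. The proof of Theorem~\ref{thm:Ehrart_schubert} identifies $\mathsf{sh}(x)$ with the truncated fence $Tr(P_x)$, which is strictly smaller than $P_x$. The claim that $\mathsf{sh}(x)$ is the whole fence of size $n+1$ cannot hold. By Proposition~\ref{prop:main_lattice_path} it would make $\mathcal{E}(\Delta_{k,n};t)$ a polynomial of degree $n+1$, whereas $\Delta_{k,n}$ has dimension $n-1$. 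Already for $\Delta_{1,2}$ the two shades are $\emptyset$ and a single cell, giving $1+t$, while the two $3$-element fences with one descent have order polynomials of degree $3$.

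The truncation is exactly what $TrBl$ encodes, and it is already built into the statement of Theorem~\ref{thm:Ehrart_schubert}. Since $Tr$ is defined intrinsically on fences, nothing depends on how $x$ sits relative to $U$. So drop that verification. The only thing to check is that $x\mapsto P_x$, which sends the $n$ steps of $x$ to the $n$ cover relations of a fence on $n+1$ elements, is a bijection from paths with $k$ North steps onto fences of size $n+1$ with $k$ descents, in the appropriate convention. That is immediate from the definition of $P_x$.

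Your fallback for the opposite convention also needs a better justification. Invariance of $\Omega$ under reversal or duality is not enough. After summing over $\sigma$, the summand is $(n+1)!\,\Omega(Tr(P);t)$, and $Tr$ removes descents on the left branch but ascents on the right branch, so it is not symmetric. In particular $Tr$ does not commute with duality: for $P=\{z_0\succ z_1\prec z_2\}$ one gets $Tr(P)=\emptyset$, but $Tr(P^*)\neq\emptyset$. A clean fix is the lattice-preserving isomorphism $\Delta_{k,n}\cong\Delta_{n-k,n}$, $x\mapsto\mathbf{1}-x$. It gives $\mathcal{E}(\Delta_{k,n})=\mathcal{E}(\Delta_{n-k,n})$, so the formula indexed by fences with $n-k$ descents, valid for every $k$, yields the one with $k$ descents.
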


\section{Background and notation}

\subsection{Order Polynomial and Ehrhart polynomial}

\begin{definition}
    Let $P = (X,\preceq)$ be a partially ordered set (poset) with $|X| = n$.  
    The \new{order polynomial} of $P$ is the unique polynomial \new{$\Omega(P;t)$} $\in \mathbb{Q}[t]$ such that
    \begin{equation}
        \Omega(P;t) := \#\{ f : X \to [1,\dots,t] \;\text{ such that }\; f(x) \le f(y) \text{ whenever } x \preceq y \}.
    \end{equation}
\end{definition}

It is a classical result (see for example \cite{stanley2011enumerative}) that this counting function is a polynomial because it can be expressed as a sum over chains of ideals. 
\begin{equation}
\label{eqn:poly_binomial}
    \Omega(P;t) = \sum_{k=1}^{n} \sum_{\emptyset= I_0 \subsetneq I_1 \dots \subsetneq I_k = P } \binom{t}{k}.
\end{equation}

Equation \eqref{eqn:poly_binomial} implies that $n!\cdot \Omega(P;t)$ has integer coefficients. Thus, we may write:
\[
    \sum_{k=0}^n a_k(P)t^k := n!\cdot \Omega(P;t),
    \qquad
    \sum_{k=0}^n c_k(P)t^k := \Omega(P;t).
\]

Equation \eqref{eqn:poly_binomial} also implies that the leading coefficient $a_n(P)$ is equal to $e(P)$, the number of linear extension of the poset.
\begin{definition}
    The \new{order polytope} of $P$, introduced by Stanley \cite{stanley1986two}, is defined as
    \begin{equation}
        \mathcal{O}(P) := \{ f \in [0,1]^P \;\text{ such that }\; f(u) \le f(v) \text{ whenever } u \preceq v \}.
    \end{equation}
\end{definition}

The polytope $\mathcal{O}(P)$ is integral. Stanley showed in \cite[Thm.~4.1]{stanley1986two} that the \new{Ehrhart polynomial}  of $\mathcal{O}(P)$, which counts the lattice points in the dilation $t\cdot \mathcal{O}(P)$, is given by the order polynomial.

\begin{proposition}[{\cite[]{stanley1986two}}]
    For any poset $P$, $\mathcal{E}(P)$, the Ehrhart polynomial of $\mathcal{O}(P)$ satisfies
    \[
        \mathcal{E}(P) := \#(t\cdot \mathcal{O}(P) \cap \mathbb{Z}^n) = \Omega_P(t+1).
    \]
\end{proposition}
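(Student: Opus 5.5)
Since both $\mathcal{E}(P)(t)$ and $\Omega(P;t+1)$ are polynomials in $t$, it suffices to check that they agree at every integer $t \ge 0$. For such $t$, I would exhibit an explicit bijection between the lattice points of $t\cdot\mathcal{O}(P)$ and the order-preserving maps counted by $\Omega(P;t+1)$.

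First I would describe the lattice points of the dilation. A point $f\in t\cdot\mathcal{O}(P)\cap\mathbb{Z}^n$ is a function $f:X\to\mathbb{Z}$ with $0\le f(x)\le t$ for all $x$ and $f(u)\le f(v)$ whenever $u\preceq v$. This holds because dilating the defining inequalities $0\le f\le 1$ and $f(u)\le f(v)$ by $t$ gives $0\le f\le t$, while the order inequalities are homogeneous and so remain unchanged.

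Next I would shift by one. The map $f\mapsto f+1$ sends such functions bijectively onto functions $g:X\to\{1,\dots,t+1\}$ with $g(u)\le g(v)$ whenever $u\preceq v$. These are exactly the maps counted in the definition of $\Omega(P;t+1)$, so
\[
\#\bigl(t\cdot\mathcal{O}(P)\cap\mathbb{Z}^n\bigr)=\Omega(P;t+1)\qquad\text{for all integers } t\ge 0.
\]

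Finally I would pass to polynomials. By Equation \eqref{eqn:poly_binomial}, $\Omega(P;t+1)$ is a polynomial in $t$. The polytope $\mathcal{O}(P)$ is integral, since its vertices are the indicator functions of order filters, so Ehrhart's theorem makes the lattice-point count a polynomial in $t$. Two polynomials that agree on infinitely many integers are equal, which proves the identity.

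The only point that needs care is this last one. It uses integrality of $\mathcal{O}(P)$, or else one may simply define $\mathcal{E}(P)$ as the polynomial interpolating the counts, which the displayed identity already shows to be $\Omega(P;t+1)$. Everything else is a direct unwinding of the definitions, so I do not expect a genuine obstacle.
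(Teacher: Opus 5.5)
Your argument is correct and is the standard one behind Stanley's theorem, which the paper only cites without giving a proof. The integer points of $t\cdot\mathcal{O}(P)$ are exactly the order-preserving maps $X\to\{0,\dots,t\}$, and the shift $f\mapsto f+1$ identifies them with the maps counted by $\Omega(P;t+1)$; as you note, Ehrhart's theorem is not even needed, since this already shows the lattice-point count equals the polynomial $\Omega(P;t+1)$ at every integer $t\ge 0$.
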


\vspace{5mm}

Ferroni, Morales, and Panova \cite[Prop.~3.3]{ferroni2025skew} showed by induction that the non-negativity of the coefficients of the order polynomial of a poset $P$ follows from the positivity of the linear coefficient of the order polynomial of every convex subposet of $P$.
Moreover, they proved the positivity of those coefficients for skew-shape posets.

\begin{theorem}{\cite[Prop. 4.1]{ferroni2025skew}}
    Let $\lambda/\mu$ a skew shape of size $n$ and length with order polynomial $\Omega(P_{\lambda/\mu}; t) = \sum_{k=1}^{n}
c_k(P)t^k$. Then if $\lambda/\mu$ is not connected, i.e, $\lambda_i = \mu_i$ or $\lambda_i \leq \mu_{i-1}$ for some i we have $c_1(P_{\lambda/\mu}) =0$. If $\lambda/\mu$ is connected then \[c_1(P_{\lambda/\mu}) = \frac{(\lambda_1 - \mu_{\ell}-1)!(\ell-1)!}{(\lambda_1 - \mu_{\ell}+ \ell -1)!}.\]
\end{theorem}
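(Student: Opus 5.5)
The disconnected case follows from multiplicativity, and for the connected case I would work from the Jacobi--Trudi/Kreweras determinant for $\Omega(P_{\lambda/\mu};t)$ and extract the coefficient of $t$ directly.

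\emph{Disconnected case.} If $\lambda/\mu$ is not connected, then $P_{\lambda/\mu}$ is a disjoint union $P_1\sqcup P_2$ of two nonempty subposets with no relations between them (an empty row, or a row sharing no column with the row above, splits the diagram). Order-preserving maps on a disjoint union are pairs of order-preserving maps, so $\Omega(P;t)=\Omega(P_1;t)\,\Omega(P_2;t)$. By \eqref{eqn:poly_binomial}, every order polynomial of a nonempty poset is a combination of $\binom{t}{k}$ with $k\ge 1$, hence divisible by $t$. Therefore $t^2\mid\Omega(P;t)$ and $c_1=0$.

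\emph{Connected case.} Order-preserving maps $P_{\lambda/\mu}\to[t]$ are exactly reverse plane partitions of shape $\lambda/\mu$ with entries in $[t]$, i.e.\ weakly increasing along rows and columns. I would use their Jacobi--Trudi expression
\[
\Omega(P_{\lambda/\mu};t)=\det\Big[h_{\lambda_i-\mu_j-i+j}(1^{t})\Big]_{1\le i,j\le \ell},\qquad h_m(1^t)=\tbinom{t+m-1}{m},
\]
with the conventions $h_0=1$ and $h_m=0$ for $m<0$. This is justified by LGV, viewing a reverse plane partition as a family of nonintersecting paths. For $m\ge1$ the entry $h_m(1^t)$ equals $t/m+O(t^2)$. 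In the permutation expansion of the determinant, the only terms contributing to the coefficient of $t$ are those in which exactly one factor has $m\ge 1$ and all other factors equal $1$, i.e.\ have $m=0$.

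The main step is a combinatorial analysis of which permutations $w$ satisfy $\lambda_i-\mu_{w(i)}-i+w(i)=0$ for all but one $i$. Connectivity forces $\lambda_i-\mu_{i-1}-1\ge 0$. I expect the admissible terms to reduce to a single chain-like structure running from the corner entry $(1,\ell)$, whose index is $m=\lambda_1-\mu_\ell+\ell-1$. In that case the answer is $\pm\frac{1}{\lambda_1-\mu_\ell+\ell-1}$ times a signed count, and the count must equal $1/\binom{\lambda_1-\mu_\ell+\ell-2}{\ell-1}$. That cannot be right as stated, since a count is an integer. So the naive "one entry of order $t$" picture is too crude, because the $m=0$ entries need not form a permutation-matrix pattern.

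If this direct extraction does not close, I would switch to an invariance argument. I would show that $c_1(P_{\lambda/\mu})$ depends only on $a=\lambda_1-\mu_\ell$ and $\ell$. To do this I would add or remove a corner cell that keeps the shape connected and the bounding box fixed, and track the change in $c_1$. The tool is the convex-subposet recursion of Ferroni--Morales--Panova [Prop.~3.3], which expresses $\Omega(P)$ through products over decompositions into convex pieces. Once $c_1$ is reduced to the thinnest connected shape with the same $a$ and $\ell$, namely a ribbon (a fence poset), the formula becomes
\[
\frac{(a-1)!(\ell-1)!}{(a+\ell-1)!}=\frac{1}{(a+\ell-1)\binom{a+\ell-2}{\ell-1}}.
\]
I would check this on the ribbon either by the chain sum $c_1=\sum_{\text{chains}}(-1)^{k-1}/k$ from \eqref{eqn:poly_binomial}, or by a Beta-integral evaluation. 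The hardest step is proving this invariance, that is, showing that the change in $c_1$ vanishes when a cell is added without altering $a$ or $\ell$.
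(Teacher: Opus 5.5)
The paper does not prove this statement (it is quoted from Ferroni--Morales--Panova), so your proposal has to stand on its own. Your disconnected case is correct, given the implicit normalization that the first and last rows are nonempty; otherwise $(1,1)/(1)$, a single cell, contradicts the statement as written. The connected case has a genuine gap, and it starts with the determinant. The expression $\det[h_{\lambda_i-\mu_j-i+j}(1^t)]=s_{\lambda/\mu}(1^t)$ counts fillings that are \emph{strictly} increasing down columns, which is what the LGV path encoding produces. Order-preserving maps $P_{\lambda/\mu}\to[t]$ are weak in both directions. For a column of two cells your determinant is $\binom{t}{2}$, with linear coefficient $-\tfrac12$, while $\Omega=\binom{t+1}{2}$ and $c_1=\tfrac12$.

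Your term-by-term bookkeeping is in fact sound for the determinant you wrote. An index-$0$ entry at $(i,j)$ with $j\ge i$ would force $\lambda_i\le\mu_j\le\mu_i$, i.e.\ an empty row. So index-$0$ entries lie strictly below the diagonal, which leaves only the long cycle $w(1)=\ell$, $w(i)=i-1$. The linear coefficient of $s_{\lambda/\mu}(1^t)$ is then $\pm1/(\lambda_1-\mu_\ell+\ell-1)$ or $0$. The non-integrality you ran into therefore signals that you are expanding the wrong polynomial, not that the $m=0$ entries fail to form a permutation pattern. To count reverse plane partitions you must shift row $i$ by $i$. This produces flagged tableaux and the determinant $\det\bigl[\binom{t-1+\lambda_i-\mu_j}{\lambda_i-\mu_j-i+j}\bigr]$. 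Its entries below the diagonal need not vanish at $t=0$; the subdiagonal ones equal $1$ for a connected shape. So the one-factor-of-order-$t$ extraction does not apply, and $c_1$ would have to come from an actual computation of the derivative of this determinant at $t=0$.

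The fallback does not close the gap either. That $c_1$ is unchanged when a corner cell is added, while preserving connectivity, $\ell$ and $a=\lambda_1-\mu_\ell$, \emph{is} the content of the theorem, and you give no mechanism for it. The recursion you point to cannot supply one. In Theorem~\ref{thm:formulmain} the linear coefficients of the blocks are the input, and for $k=1$ the only standard decomposition is $\{P\}$, so it reads $c_1(P)=c_1(P)$. Nor does $c_1$ change in any local way when an element is added. Adjoining a maximum gives $\Omega(Q\oplus\hat{1};t)=\sum_{s=1}^{t}\Omega(Q;s)$, hence $c_1(Q\oplus\hat{1})=\sum_{k\ge1}c_k(Q)B_k$ (Bernoulli numbers, $B_1=\tfrac12$), which involves every coefficient of $\Omega(Q)$. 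In the connected case you have only the ribbon base case. That case is Lemma~\ref{lem:c1_fence} with $r=\ell$ ascents (the first cell and the $\ell-1$ north steps) and $s=a-1$ descents. The passage from ribbons to arbitrary connected skew shapes, which is the substance of the result, is missing.
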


\subsection{Fence posets and cyclic fence posets}

A \new{fence poset} $P$ is a poset whose Hasse diagram is a path.  
It can be described by a collection of positions $2 \leq \alpha_1 < \alpha_2 < \dots < \alpha_k \leq n$ on the ground set $\{z_1, \dots, z_n\}$ such that  
\begin{equation} 
	\label{eq: ineqs fences}
	z_1 \prec z_2 \prec \dots \prec z_{\alpha_1 - 1} \succ z_{\alpha_1} \prec \dots \prec z_{\alpha_2 - 1} \succ z_{\alpha_2} \prec \dots
\end{equation}

For a given fence poset $P$, we introduce the following notations which will be convenient for the proof of the main theorem. 

Define : 
\begin{enumerate}
	\item \textcolor{blue}{\textbf{desc}$(P)$} be the set of elements $x$ of $P$ which are lower than the element at its left.  
	\item \textcolor{blue}{\textbf{asc}$(P)$} be the set of elements $x$ of $P$ which are greater than the element at its left or are the leftmost element.  
	\item \textcolor{blue}{$P_{/i}$} be the fence poset constructed by removing the element of $i$ and the edge at its left and gluing the remaining. 
	\item let \textcolor{blue}{$f(i)$} be the first position after $i$ in desc, we use the convention $f(i) = |P|$ if there is no such element. 
\end{enumerate}

For example, in Figure \ref{fig:example_f} the ascent elements are in blue ({\LARGE $\textcolor{blue}{\circ}$}) while the descent elements are in green ({\LARGE $\textcolor{green}{\bullet}$}) and we have $f(0) = f(1) = 2, f(3) = f(4)=5, f(6) = 7$ and $f(10) = f(11) = 12$. \\
Removing the vertex at position $5$ gives Figure \ref{fig:example_P/i}. 

\begin{figure}[ht]
	\centering
	\includegraphics[width=0.5\linewidth]{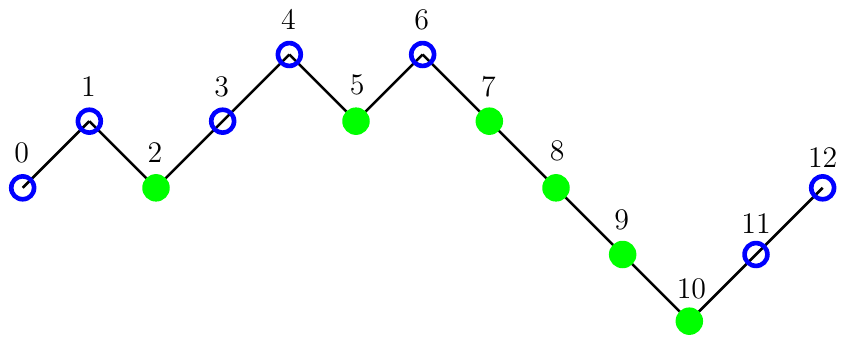}
	\caption{A fence poset $P$. In blue the ascent set, in green the descent set. }
	\label{fig:example_f}
\end{figure}

\begin{figure}[ht]
	\centering
	\includegraphics[width=0.5\linewidth]{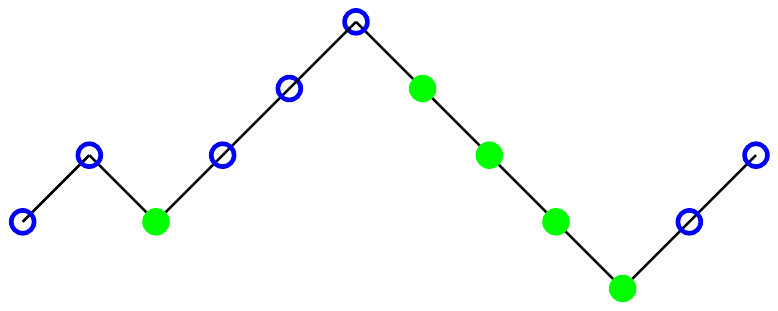}
	\caption{The fence poset $P_{/5}$.  }
	\label{fig:example_P/i}
\end{figure}

Let $P$ be a fence poset of $n$ elements. The position of the elements of $P$ goes from left to right starting at $0$. Let $P_{[i,j]}$ be the fence subposet consisting of element with position $i \leq \ell \leq j$. Note that $P_{[0,n-1]} = P$, $P_{[j,i]} = \emptyset$ if $j  > i $ and $P_{[i,i]}$ is the singleton poset. 

If $P$ is a fence poset, the linear coefficient of the order polynomial has been computed my Ferroni, Morales, Panova.  

\begin{lemma}[{\cite[Prop.~3.3]{ferroni2025skew}}]
\label{lem:c1_fence}
    Let $P$ be a fence, let $r = |\textnormal{asc}(P)|$ and let $s = |\textnormal{asc}(P)|$. then 
    \[ 
    c_1(P) = \frac{(r-1)!s!}{(r+s)!}.\]
\end{lemma}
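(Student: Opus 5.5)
The plan is to obtain Lemma~\ref{lem:c1_fence} as a special case of the theorem of Ferroni, Morales and Panova on skew shapes quoted above (\cite[Prop.~4.1]{ferroni2025skew}). The key observation is that every fence poset is isomorphic to the poset $P_{\lambda/\mu}$ of a connected \emph{ribbon} (border strip). I read the statement with the evident typo corrected, so that $s=|\textnormal{desc}(P)|$.

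\textbf{Step 1: realize the fence as a ribbon.} Traverse $z_1,\dots,z_n$ from left to right and place cells one at a time. Each time we meet an element of $\textnormal{asc}(P)$, the new cell continues the current row; each element of $\textnormal{desc}(P)$ starts a new cell in the same column, in the adjacent row. The leftmost element, which lies in $\textnormal{asc}(P)$ by convention, is the first cell. Which step direction is ``row'' and which is ``column'' is fixed by the orientation convention for $P_{\lambda/\mu}$ (cells increasing to the right and downward, or the transpose). I will choose it so that:
\begin{itemize}
\item consecutive cells in a row form the covering chain $z_i\prec z_{i+1}$;
\item consecutive cells in a column realize $z_i\succ z_{i+1}$.
\end{itemize}
The resulting cells form a connected skew shape $\lambda/\mu$ containing no $2\times 2$ square. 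In a ribbon, the only cover relations of $P_{\lambda/\mu}$ are between cells adjacent along the strip, so $P_{\lambda/\mu}\cong P$.

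\textbf{Step 2: count rows and columns.} With this choice, the number of rows is $\ell=r$, because each row beyond the first is started by a descent step. Hmm—I will double-check this against the convention and transpose if needed; this bookkeeping is the step I expect to be the main (though minor) obstacle. The safer invariant is that a connected ribbon with $n=r+s$ cells satisfies
\[
\#\text{rows}+\#\text{columns}=n+1 .
\]
The number of columns is $\lambda_1-\mu_\ell$. So one of these counts equals $r$ and the other equals $s+1$, depending on the orientation.

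\textbf{Step 3: apply the theorem.} Since the ribbon is connected, \cite[Prop.~4.1]{ferroni2025skew} gives
\[
c_1(P)=\frac{(\lambda_1-\mu_\ell-1)!\,(\ell-1)!}{(\lambda_1-\mu_\ell+\ell-1)!}.
\]
Substituting $\ell=r$ and $\lambda_1-\mu_\ell=s+1$ yields
\[
c_1(P)=\frac{s!\,(r-1)!}{(r+s)!},
\]
as claimed. This assignment also pins down the orientation left open in Step~2: the formula is not symmetric in $r$ and $s$, so the correct transpose is the one under which the shape with rows $=r$ reproduces the known cases.

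\textbf{Check on small cases.} For the chain, we have $s=0$ and $r=n$, giving $c_1=\tfrac{1}{n}$; this matches $\Omega=\binom{t+n-1}{n}$. For the two-element antichain-like fence $z_1\succ z_2$, we have $r=s=1$, giving $c_1=\tfrac12$; this matches $\Omega=\binom{t+1}{2}$. These two cases confirm the orientation.

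\textbf{Fallback.} If one prefers to avoid identifying conventions, there is an alternative route. First show directly that $c_1$ depends only on $(r,s)$, via a local move that slides a descent element across the fence. Then evaluate $c_1$ on the hook shape with $r$ rows and $s+1$ columns. This hook is itself a fence with the same $(r,s)$, so the theorem applies to it directly.
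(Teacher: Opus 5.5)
Your reduction is correct, and it takes a different route from the paper, which gives no argument for this lemma and simply imports it from \cite[Prop.~3.3]{ferroni2025skew}. You instead deduce it from the other imported result, the skew-shape formula \cite[Prop.~4.1]{ferroni2025skew}, by realizing the fence as a connected ribbon. In a ribbon only cells that are consecutive along the strip are adjacent, so $P_{\lambda/\mu}$ is exactly the fence and the formula applies. What this buys is economy: only the one background theorem is needed, and it explains why $c_1$ depends only on $(r,s)$, namely on the bounding box. The price is that it rests on the stronger skew-shape result.

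One correction to your bookkeeping in Steps 2--3: the orientation question is moot. Write $a=\lambda_1-\mu_\ell$ for the number of columns and $\ell$ for the number of rows. The skew-shape formula $\frac{(a-1)!\,(\ell-1)!}{(a+\ell-1)!}$ is symmetric in $a$ and $\ell$, as it must be, since transposition gives an isomorphic poset.

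With the construction you actually describe (ascents extend the current row, descents open a new row), you get $\ell=s+1$ rows and $r$ columns, not $\ell=r$ as you first assert. Substituting either assignment gives $\frac{(r-1)!\,s!}{(r+s)!}$, so there is nothing to ``pin down''. Had the choice mattered, selecting the transpose by matching the target formula would have been circular.

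For the same reason, neither of your small checks discriminates between orientations. The chain gives $1/n$ under both assignments. The fence $z_1\succ z_2$, which is a $2$-chain rather than anything antichain-like, has $r=s=1$.
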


\subsection{Standard decompostions into subposets}

\begin{definition}[decomposition into convex subposets] \label{def:2}
	Let $P$ be a poset, a \new{decomposition $\mu$ into convex subposets} is a partition of the ground set of $P$ into $k$ convex subposets $P_1,..., P_k$.
	Let $P_i, P_j$ two subposets of $P$ from the decomposition $\mu$, we say that $P_i \tilde{\leq_{\mu}} P_j$ if and only if there exist $a_i \in P_i$ and $a_j \in P_j$ such that $a_i \leq_P a_j$. 
\end{definition}

\begin{figure}[ht]
    \centering
    \includegraphics[width=0.7\linewidth]{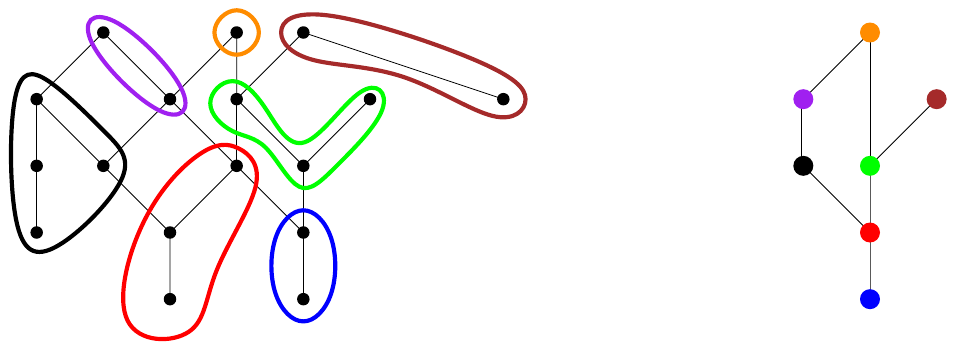}
    \caption{A standard decomposition $\mu$ and the induced poset.}
    \label{fig:standard_decompo}
\end{figure}

From the definition of subposets, we see that $P_i \widetilde{\leq}_{\mu} P_j$ if and only if $i=j$. However the relation $\widetilde{\leq}_{\mu}$ is not a partial order relation yet. In order to create such a relation we need to consider a smaller class of decomposition into convex subposets. 

\begin{definition}[Standard Decomposition]\label{def:3}
	A decomposition $\mu = \{P_1,...., P_k\}$ into convex subposets is said to be \new{standard} if there is no $2 \leq r \leq k$ and $r$ distinct $i_1, i_2,..., i_r$ such  that  $P_{i_1} \widetilde{\leq}_{\mu} P_{i_2} .... \widetilde{\leq}_{\mu} P_{i_r} \widetilde{\leq}_{\mu} P_{i_1}$. 
\end{definition}

\begin{proposition}\label{lemma:2}~\\
	Let $\leq_{\mu}$ be the transitive closure of $\widetilde{\leq}_{\mu}$ then : 
	\[\leq_{\mu} \text{ is a a partial order relation } \text{ if and only if } \mu \text{ is a standard decomposition}\]
\end{proposition}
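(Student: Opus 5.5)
We check the three axioms of a partial order for $\leq_{\mu}$ on the blocks $\{P_1,\dots,P_k\}$, and show that only antisymmetry can fail, failing exactly when $\mu$ is not standard. Since $\leq_\mu$ is defined as a transitive closure, transitivity holds automatically. Reflexivity also holds: each $P_i$ is nonempty, and $a\leq_P a$ for any $a\in P_i$, so $P_i\,\widetilde{\leq}_{\mu}\,P_i$ and hence $P_i\leq_\mu P_i$. So $\leq_\mu$ is a partial order if and only if it is antisymmetric.

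($\Leftarrow$) Assume $\mu$ is standard and suppose $P_i\leq_\mu P_j$ and $P_j\leq_\mu P_i$ with $i\neq j$. By definition of the transitive closure there are chains
\[
P_i=Q_0\,\widetilde{\leq}_\mu\, Q_1\,\widetilde{\leq}_\mu\cdots\widetilde{\leq}_\mu\, Q_m=P_j
\quad\text{and}\quad
P_j=Q_m\,\widetilde{\leq}_\mu\, Q_{m+1}\,\widetilde{\leq}_\mu\cdots\widetilde{\leq}_\mu\, Q_{m+p}=P_i.
\]
Concatenating them gives a closed walk in the directed graph on blocks whose arcs are the relations $\widetilde{\leq}_\mu$. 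This walk passes through two distinct vertices, $P_i$ and $P_j$.

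The step needing care is extracting from this walk a cycle of \emph{distinct} blocks, as Definition~\ref{def:3} requires. A standard argument does this. Take a shortest closed subwalk that still contains at least two distinct blocks. Any repeated vertex would split it into two shorter closed walks, at least one of which still visits two distinct blocks, contradicting minimality. Loops $Q\,\widetilde{\leq}_\mu\, Q$ can simply be deleted from the walk. What remains is a cycle $P_{i_1}\,\widetilde{\leq}_\mu\cdots\widetilde{\leq}_\mu\, P_{i_r}\,\widetilde{\leq}_\mu\, P_{i_1}$ with $r\geq 2$ distinct indices. This contradicts standardness, so $i=j$ and $\leq_\mu$ is antisymmetric.

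($\Rightarrow$) Conversely, suppose $\mu$ is not standard. Then there is a cycle $P_{i_1}\,\widetilde{\leq}_\mu\, P_{i_2}\,\widetilde{\leq}_\mu\cdots\widetilde{\leq}_\mu\, P_{i_r}\,\widetilde{\leq}_\mu\, P_{i_1}$ with $r\geq2$ distinct indices. Following the cycle forward from $P_{i_1}$ to $P_{i_2}$ gives $P_{i_1}\leq_\mu P_{i_2}$. Following it forward from $P_{i_2}$ around back to $P_{i_1}$ gives $P_{i_2}\leq_\mu P_{i_1}$. Since $i_1\neq i_2$, antisymmetry fails, so $\leq_\mu$ is not a partial order.

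Convexity of the blocks is not used in this argument; it is needed only to make the resulting quotient poset meaningful later in the paper.
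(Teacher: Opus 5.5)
The paper states this proposition without any proof, so there is no argument of the author's to compare with. Your proof is correct and complete, and it supplies exactly the routine verification that is missing.

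You handle each axiom properly. Reflexivity follows from the parts of a partition being nonempty, and transitivity holds by construction of the closure. You also correctly reduce antisymmetry to the absence of a directed cycle through $r\geq 2$ distinct blocks.

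Your extraction of a simple cycle from a closed walk is sound. The separate loop-deletion step is redundant, though. A loop $Q\,\widetilde{\leq}_\mu\,Q$ is itself a repeated vertex, so your minimality argument already eliminates it.

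Your remark that convexity plays no role is also correct. Incidentally, the paper's sentence asserting that $P_i\,\widetilde{\leq}_\mu\,P_j$ holds if and only if $i=j$ is false as written: any comparable pair $a\leq_P b$ lying in different parts violates it. The true statement there is just the reflexivity $P_i\,\widetilde{\leq}_\mu\,P_i$, which is what you use.
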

We define \new{$\text{SDecompo}_{k}(P)$} the set of standard decompositions of $P$ into $k$ convex subposets. Figure \ref{fig:standard_decompo} gives an example of a standard decomposition into $7$ subposets.

\vspace{5mm}

Coefficients of the order polynomial $\Omega(P)$ can be express as a weighted sum over the subposets of $P$ where the weight function is the product of the linear coefficient of the block posets which form the decomposition. In this case, we define \new{$e(\mu)$} to be the number of linear extension on ${(\mu, \leq_{\mu})}$. It was first discovered by Shareshian, Wright, Zhao (\cite{shareshian2003newapproachorderpolynomials}), using chains of ideals. In this article, we will restate the theorem using standard decompositions into convex subposets introduced by Kahane \cite{kahane2025yoshida}.

\begin{theorem}
(\cite{shareshian2003newapproachorderpolynomials})
Let $P$ be a poset of size $n$. The coefficients of the order polynomial satisfy :  
\label{thm:formulmain}
    $$c_k(P) = \frac{1}{k!} \sum_{ \mu \in \text{SDecompo}_k(P)} e(\mu)\prod_{P_i \in \mu} c_1(P_i).$$
\end{theorem}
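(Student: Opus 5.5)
We need to prove $c_k(P) = \frac{1}{k!}\sum_{\mu\in \mathrm{SDecompo}_k(P)} e(\mu)\prod_i c_1(P_i)$. The route is to expand $\Omega(P;t)$ over ordered set partitions and then extract the coefficient of $t^k$.

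\textbf{Step 1: expansion over ordered decompositions.} Every order-preserving map $f:P\to[t]$ has a sequence of nonempty level sets at its distinct values $v_1<\dots<v_m$. These give an ordered set partition $(B_1,\dots,B_m)$ with $B_j=f^{-1}(v_j)$. Each $B_j$ is an antichain, and the ordered partition is compatible with $P$: if $x\preceq y$ with $x\in B_i$ and $y\in B_j$, then $i\le j$. Conversely, every compatible ordered antichain partition together with a choice of $v_1<\dots<v_m$ gives such an $f$. This yields the chain-of-ideals formula \eqref{eqn:poly_binomial}: $\Omega(P;t)=\sum_m N_m\binom{t}{m}$, where $N_m$ counts chains $\emptyset=I_0\subsetneq\cdots\subsetneq I_m=P$.

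\textbf{Step 2: grouping by a coarser decomposition.} Write $\binom{t}{m}=\frac{1}{m!}\sum_k s(m,k)t^k$. The aim is to group the chains of ideals according to a coarser decomposition. Given an ordered decomposition $(Q_1,\dots,Q_k)$ of $P$ into convex pieces that is a linear extension of a standard decomposition $\mu$, one can refine each $Q_i$ by a chain of ideals of $Q_i$. The claim to establish is the identity
\[
\Omega(P;t)=\sum_{\mu\ \text{standard}} e(\mu)\,\frac{1}{|\mu|!}\cdots,
\]
and the clean way to obtain it is through the generating function $\sum_t \Omega(P;t)x^t$, or equivalently by the following argument. Set $g_P(t)=\Omega(P;t)$, so that $c_1(P)=g_P'(0)$ up to the normalization $g_P(0)=0$ for nonempty $P$. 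The standard fact to use is that every chain of ideals of $P$ factors uniquely by merging consecutive steps. This gives the exponential-type formula
\[
\Omega(P;t)=\sum_{k}\frac{1}{k!}\sum_{\mu\in\mathrm{SDecompo}_k(P)} e(\mu)\prod_{P_i\in\mu}L_{P_i}(t)\quad\text{with}\quad L_Q(t)=\sum_m \frac{(-1)^{m-1}}{m}N_m(Q)\ldots,
\]
that is, a logarithm of $\Omega$ in the composition sense.

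\textbf{Step 3: the plethystic/logarithm argument.} I would model this on the identity $\Omega(P;s+t)=\sum_{I\text{ ideal}}\Omega(I;s)\,\Omega(P\setminus I;t)$, which holds because the values $\le s$ form an ideal. Define $\ell(Q)=c_1(Q)$. Differentiating the identity in $s$ at $s=0$ gives
\[
\Omega'(P;t)=\sum_{\emptyset\neq I}c_1(I)\,\Omega(P\setminus I;t).
\]
Iterating this from the bottom, and writing $c_k=\frac{1}{k!}\Omega^{(k)}(0)$, we get
\[
k!\,c_k(P)=\sum_{I_1\sqcup\cdots\sqcup I_k}c_1(I_1)\,c_1(I_2)\cdots c_1(I_k),
\]
where the sum runs over sequences in which $I_1$ is an ideal of $P$, $I_2$ is an ideal of $P\setminus I_1$, and so on, with the last piece $I_k$ taken so that $\Omega(\cdot\,;0)$ contributes only for the empty poset, and the final derivative is evaluated at $0$. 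Since $c_1(Q)=0$ for disconnected $Q$ (the order polynomial is multiplicative over disjoint unions and vanishes at $0$), only connected pieces contribute.

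\textbf{Step 4: matching with standard decompositions.} Each sequence from Step 3 is an ordered decomposition into convex connected pieces, each of which is an ideal of the remainder. The main obstacle is to show that such ordered sequences are in bijection with pairs $(\mu,\text{linear extension of }\leq_\mu)$ for standard $\mu$ (restricted to connected pieces). An ordered sequence of successive ideals yields a decomposition with no cycles in $\widetilde{\leq}_\mu$, because the ordering is compatible with it; hence the decomposition is standard, and the ordering is a linear extension of $\leq_\mu$ by Proposition~\ref{lemma:2}. Conversely, any linear extension of a standard $\mu$ lists the blocks so that each block is an ideal of the union of the blocks not yet listed. Pieces that are disconnected contribute zero on both sides. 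If the theorem's sum includes disconnected blocks, they contribute $c_1=0$ there as well, so the two sums agree, giving $c_k(P)=\frac1{k!}\sum_\mu e(\mu)\prod_i c_1(P_i)$.
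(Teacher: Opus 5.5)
Your proof is correct, and it takes a different route from the paper. The paper does not prove Theorem~\ref{thm:formulmain}. It quotes the result from Shareshian--Wright--Zhao, who obtain it from chains of ideals, restated in Kahane's language of standard decompositions. It then uses the theorem in the opposite direction: it differentiates the expansion and splits off the last block of a linear extension as a connected filter, which gives the recurrence of Theorem~\ref{thm:rec_omega}.

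You go the other way:
\begin{enumerate}
\item The convolution identity $\Omega(P;s+t)=\sum_{I}\Omega(I;s)\,\Omega(P\setminus I;t)$ holds for all integers $s,t\geq 0$, hence as polynomials. Differentiating at $s=0$ gives the ideal version of that recurrence.
\item Iterating $k$ times and evaluating at $t=0$, using $\Omega(Q;0)=0$ for $Q\neq\emptyset$, writes $k!\,c_k(P)$ as a sum over sequences of successive ideals.
\item These sequences are in bijection with pairs consisting of a standard decomposition $\mu$ and a linear extension of $\leq_\mu$.
\end{enumerate}

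This gives a short, self-contained argument that needs nothing from Step~1 beyond polynomiality and $\Omega(Q;0)=0$. Applied with the split at the top values instead, it also proves Theorem~\ref{thm:rec_omega} directly, without going through the cited theorem. The chain-of-ideals route instead produces the explicit formula $c_1(Q)=\sum_m \frac{(-1)^{m-1}}{m}N_m(Q)$, which is what your Step~2 gestures at. Step~2 as written only restates the goal with placeholders and can be deleted, since Steps~3--4 stand on their own.

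When you write out Step~4, state explicitly that $I_1\cup\cdots\cup I_j$ is an ideal of $P$ for every $j$. This single fact does two jobs:
\begin{enumerate}
\item It gives convexity of $I_j$, since $I_j$ is the intersection of that ideal with the filter $P\setminus(I_1\cup\cdots\cup I_{j-1})$.
\item It shows that $x\in I_a$, $y\in I_b$, $x\leq_P y$ and $a\neq b$ force $a<b$. This is exactly standardness together with the fact that the listing is a linear extension.
\end{enumerate}
The converse direction is the same observation read backwards. If $Q_1,\dots,Q_k$ is a linear extension of $\leq_\mu$, then $Q_c\,\widetilde{\leq}_\mu\,Q_j$ with $c\neq j$ forces $c<j$. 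Hence each $Q_j$ is an ideal of $Q_j\cup\cdots\cup Q_k$.
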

Given a poset $P$ of size $n$, a \new{labeling} is a bijective map from $P$ to $\{ 1, \dots, n \}$. Since this set is in bijection with $S_n$, the set of permutation of $\{1, \dots, n \}$, we shall identify it with $S_n$ whenever convenient.

\section{Statistic for fence Poset}
Given a fence poset $P$ of size $n$, recall that $\sum_{k=1}^{n}a_k = n!$, which is the number of labelings of $P$. Thanks to Lemma \ref{lem:c1_fence}, those coefficients are nonnegative. The goal of this section is to give a statistic of the labeling of the vertices which gives an interpretation of the coefficients $a_k$. 

The interpretation of $[t^n] \, n!\; \Omega(P;t) = e(P)$ as counting linear extensions of $P$, together with the formula for $[t^1] \, \Omega(P;t)$ from Lemma \ref{lem:c1_fence} and the decomposition into convex subposets in Theorem~\ref{thm:formulmain}, inspired the following statistic describing the coefficients $a_k$. This statistic uses the notions of block partitions that will be defined in this section. Figure~\ref{fig:fence_blocks} gives an example of blocks and the statistic.

\begin{definition}[Fence blocks]
	Let $P$ be a fence poset and let $\sigma$ be a labeling of the elements of $P$.  
	A \new{blocks} is a subposet such that :
	\begin{itemize}
		\item The leftmost element $x$ of the block is called the \new{root},
		\item For each other element $y$ of the block, if $y \in \textnormal{asc}(P)$, then $\sigma(y) < \sigma(x)$,
		\item For each other element $y$ of the block, if $y \in \textnormal{desc}(P)$ then $\sigma(y) > \sigma(x)$.
	\end{itemize}
\end{definition}

A \new{block partition} of a poset and labeling $(P, \sigma)$ is a partition $B = \{B_1, \dots, B_k\}$ into blocks. In a similar way as for decomposition into subposets, given two blocks $B_i, B_j$ we say that $B_i < B_j$ if and only if there exists $y \in B_i, z \in B_j$ such that $y <_P z$. We denote $x_i$ the root of $B_i$. 

\begin{definition}
    Given a fence poset with a labeling $(P, \sigma)$ and a block partition $B = \{B_1, \dots, B_k\}$. Define the order relations on the blocks by $B_i \textcolor{blue}{\prec} B_j$ if and only if $\sigma(x_i ) < \sigma(x_j)$. 
\end{definition}

We say that a block partition $B = \{B_1, \dots, B_k\}$ is \new{valid} if $i \neq j$, $B_i < B_j$ implies $B_i \prec B_j$.

\begin{lemma}
\label{lem:valide_block_fence}
    Let $P$ be a fence poset of size $n$ and a labeling $\sigma$, There exists an unique valid block partition on $(P, \sigma)$. 
\end{lemma}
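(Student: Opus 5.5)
The plan is a left-to-right greedy construction, followed by an induction on positions showing that any valid block partition is forced to coincide with the greedy one. I read ``subposet'' in the definition of a block as a \emph{connected} subposet of the fence, that is, an interval $P_{[i,j]}$ of consecutive positions. The argument depends on this reading. With it, two blocks $B_i,B_j$ can only be comparable under $<$ if they are consecutive, because the only Hasse edges between distinct blocks are the edges joining the last element of one block to the first element (the root) of the next.

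The first step is to reformulate validity as a local condition. Let $B_i=P_{[a,b]}$ and $B_{i+1}=P_{[b+1,c]}$ be consecutive blocks, with roots $x_i$ at position $a$ and $x_{i+1}$ at position $b+1$. If $x_{i+1}\in\textnormal{asc}(P)$, then the element at position $b$ lies below $x_{i+1}$, so $B_i<B_{i+1}$, and validity requires $\sigma(x_i)<\sigma(x_{i+1})$. If $x_{i+1}\in\textnormal{desc}(P)$, then $B_{i+1}<B_i$, and validity requires $\sigma(x_{i+1})<\sigma(x_i)$. A block partition is therefore valid exactly when every new root satisfies the opposite of the membership inequality relative to the previous root. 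For asc, membership would be $\sigma(y)<\sigma(x)$, and the validity condition is $\sigma(y)>\sigma(x)$. For desc, membership would be $\sigma(y)>\sigma(x)$, and the validity condition is $\sigma(y)<\sigma(x)$. Since $\sigma$ is injective, for each $y$ and each current root $x$ exactly one of the two alternatives holds: either $y$ \emph{may join} the block of $x$, or $y$ \emph{may start} a new valid block.

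For existence, scan the positions $0,1,\dots,n-1$. Position $0$ is a root. Suppose the current block has root $x$ and $y$ is the next element. Put $y$ into the current block if the membership condition for $x$ holds, and otherwise make $y$ a new root. By construction each part is a block, since the membership condition depends only on the root and on whether $y$ is in asc or desc. By the reformulation above, every transition between blocks satisfies the validity condition, so the greedy partition is valid.

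For uniqueness, let $B$ be any valid block partition, and show by induction on the position $p$ that $B$ and the greedy partition agree on $P_{[0,p]}$. Position $0$ is the root in both. Assume agreement up to position $p-1$, with current root $x$, and let $y$ be at position $p$. If $y$ satisfies the membership condition for $x$, then $y$ cannot be a new root in $B$, since validity would require the opposite inequality. Hence $y$ lies in the block of $x$, as in the greedy partition. If $y$ fails the membership condition, then $y$ cannot lie in the block of $x$. Because blocks are intervals, $y$ is then the root of a new block, again as in the greedy partition. The only real obstacle is justifying that blocks are intervals, so that non-consecutive blocks are incomparable and the validity condition really is local. I would state this convention explicitly at the start of the proof; the rest is the dichotomy coming from the injectivity of $\sigma$.
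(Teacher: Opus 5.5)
Your strategy is essentially the paper's. The paper inducts on $n$ by deleting the rightmost element $y$, after dualizing the order and replacing $\sigma$ by $n+1-\sigma$ so that $y\in\textnormal{asc}(P)$. It then shows $y$ is forced either into the last block or to be a singleton root, which is your left-to-right forced choice. The paper also treats blocks as intervals without saying so, and you are right that this convention is needed. For $z_0<_P z_1>_P z_2$ with labels $1,3,2$, the convex but disconnected block $\{z_0,z_2\}$ gives a second valid partition $\{z_0,z_2\},\{z_1\}$ besides $\{z_0\},\{z_1\},\{z_2\}$.

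However, the justification for your local reformulation is false. You claim that ``two blocks can only be comparable under $<$ if they are consecutive'', but $B_i<B_j$ is defined through $<_P$, not through Hasse edges. For the chain $z_0<_P z_1<_P z_2$ with labels $1,2,3$, the greedy partition is $\{z_0\},\{z_1\},\{z_2\}$, and $\{z_0\}<\{z_2\}$ although these blocks are not consecutive.

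Your uniqueness half still stands, because it only uses that a valid partition satisfies the condition on consecutive blocks, which is immediate from the definition. The existence half, showing the greedy partition is valid, needs the converse, and you have not proved it.

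The repair is one transitivity step. Suppose $y\in B_i$, $z\in B_j$, $i\neq j$ and $y<_P z$. The Hasse path from $y$ to $z$ is monotone increasing, since a chain of covers in a path graph cannot backtrack. Because blocks are intervals, this path crosses every boundary between consecutive blocks from $B_i$ to $B_j$, each time going upward. Hence every root it crosses lies in $\textnormal{asc}(P)$ if $B_i$ is to the left of $B_j$, and in $\textnormal{desc}(P)$ otherwise. In either case your local conditions chain into strict inequalities between consecutive roots, ending in $\sigma(x_i)<\sigma(x_j)$.

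With this step added your proof is complete. The paper's ``and indeed \dots\ is a valid block partition'' passes over the same point.
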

Figure \ref{fig:fence_blocks} gives an example of a block and the unique valid decomposition of a labeled fence post $(P, \sigma)$. 
\begin{proof}
    We proceed on induction on $n$ the size of $P$.
    If $n = 1$, the statement holds, since there is the unique trivial block partition which is valid. \\
    Let $n \geq 2$ such that the statement holds for all posets of size $n-1$. Let $P$ be a fence poset of size $n$ and let $y$ be the rightmost element of $P$. By reversing the poset if necessary and replacing $\sigma$ by $n+1 - \sigma$ we may assume that $y \in \textnormal{asc}(P)$. Let $P' = P \setminus \{y \}$. Assuming that a solution exists, we derive the necessary conditions; conversely, these conditions allow us to construct the unique solution. Assume that there exists an valid block partition $B = \{B_1, \dots, B_k\}$ of $P$, then the restriction of $B$ to $(P', \sigma_{|P'})$ is also a valid block partition. By the induction hypothesis, let $B' = \{B'_1, \dots, B'_{k'}\}$ be the unique block partition of $(P', \sigma_{|P'})$ and let $x_{k'}$ be the root of the rightmost block of $B'$.
    If $\sigma(y) > \sigma(x_k')$, since $y \in \textnormal{asc}(P)$, $y$ cannot be in the block of root $B_k'$. Therefore $y$ is the root of the block only containing $y$. And indeed $B = (\{_1, \dots, B'_{k'}, \{y\}\}$ is a valid block partition of $(P, \sigma)$ and therefore, the only block partition of $(P, \sigma)$. 
    If $\sigma(y) < \sigma(x_k')$, since $y \in \textnormal{asc}(P)$, $y$ cannot be the root of its block because in this case we would have $B'_{k'} < \{y\}$ but $B'_{k'} \succ \{y\}$. Therefore, $y \in B'_{k'}$. And indeed $B = \{B_1, \dots, B'_{k'} \cup \{y\}\}$ is a valid block partition of $(P, \sigma)$ and therefore, the only block partition of $(P, \sigma)$. 
\end{proof}

\begin{figure}[ht]
	\centering
	\includegraphics[width=0.9\linewidth]{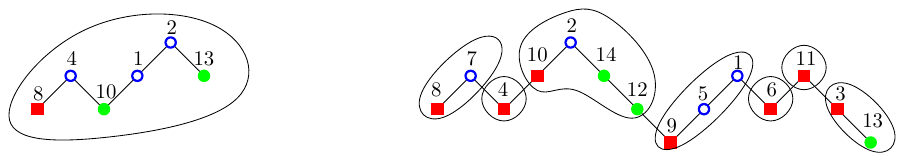}
	\caption{A block and the unique valid block partition of  $(P,\sigma)$. Here, $bl_P(\sigma) = 7$. Roots are represented as red squares, ascents as blue circles and descents as green disks. }
	\label{fig:fence_blocks}
\end{figure}

Notice that the proof of the last lemma also gives an algorithm to construct the unique valid block partition of a label fence poset $(P, \sigma)$. 
\begin{proposition}
	Given a fence poset $P$ with a labeling $\sigma$, the unique block partition of $(P, \sigma)$ is obtained by scanning from left to right and greedily constructing the largest possible leftmost block at each step.  
\end{proposition}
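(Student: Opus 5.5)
The plan is to compare the greedy left-to-right procedure with the inductive construction in the proof of Lemma \ref{lem:valide_block_fence}, and show by induction on $n=|P|$ that they produce the same partition. Uniqueness in Lemma \ref{lem:valide_block_fence} then does the rest. First I would make the greedy rule precise. Start a block at the leftmost unassigned element $x$, which becomes its root. Extend the block to the right by the next element $y$ as long as $y$ satisfies the block condition relative to $x$: $\sigma(y)<\sigma(x)$ if $y\in\textnormal{asc}(P)$, and $\sigma(y)>\sigma(x)$ if $y\in\textnormal{desc}(P)$. The block is closed at the first element that fails, and that element becomes the root of the next block. Blocks are intervals $P_{[i,j]}$ by construction. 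The proof of the lemma also produces blocks that are contiguous intervals, since a new element is either appended to the rightmost block or starts a new one.

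The inductive step goes as follows. Let $y$ be the rightmost element and $P'=P\setminus\{y\}$. The greedy procedure on $(P,\sigma)$ runs exactly as the greedy procedure on $(P',\sigma_{|P'})$ until it reaches $y$. By induction, the latter yields the unique valid partition $B'$, with rightmost block $B'_{k'}$ and root $x_{k'}$. At $y$, greedy adds $y$ to $B'_{k'}$ exactly when the block condition holds relative to $x_{k'}$; otherwise it starts $\{y\}$. This is the same dichotomy as in the lemma's proof. For $y\in\textnormal{asc}(P)$, that proof appends $y$ iff $\sigma(y)<\sigma(x_{k'})$, which is precisely the block condition. For $y\in\textnormal{desc}(P)$, the lemma's proof handles this case by reversing the poset and complementing the labels. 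I would instead treat it directly: if $\sigma(y)>\sigma(x_{k'})$, append $y$; otherwise start $\{y\}$. Validity holds because $y<_P$ its left neighbour, which lies in $B'_{k'}$, so $\{y\}<B'_{k'}$ requires $\sigma(y)<\sigma(x_{k'})$. So greedy and the inductive construction agree, and the greedy output is valid, hence equal to the unique valid partition.

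Finally I would justify the phrase ``largest possible leftmost block.'' Any block with root $x$ that is contiguous must stop at the first failing element, so greedy gives the maximal one. I would also note that in the unique valid partition the first block must be this maximal block, which follows by applying the argument above to prefixes.

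The main obstacle I expect is the reversal step in the lemma. Reversing the poset and taking $n+1-\sigma$ changes which element is the root and swaps the scan direction, so the greedy left-to-right description is not obviously preserved under that symmetry. That is why I would avoid the symmetry and redo the descent case directly. I would then need to check carefully that validity of the enlarged partition only involves comparisons between $y$'s block and its neighbouring block, which holds since covering relations in a fence connect only adjacent elements.
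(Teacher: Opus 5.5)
Your proposal is correct and follows the paper's own justification: the paper gives no separate proof, only the remark that the inductive construction in the proof of Lemma~\ref{lem:valide_block_fence} (append the rightmost element to the last block, or make it a new singleton block) is exactly the greedy left-to-right scan, and your induction makes that comparison explicit. Your worry about the symmetry step is unfounded: there ``reversing the poset'' means passing to the dual order together with $\sigma\mapsto n+1-\sigma$, which fixes positions, roots and the scan direction, though your direct treatment of the descent case works just as well.
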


We can now define the polynomial $A(P;t)$, which will turn out to equal $n! \, \Omega(P;t)$.

\begin{definition}
	\label{def:bl}
	Let $bl_P(\sigma)$ denote the number of blocks in the unique block decomposition of $(P,\sigma)$. We define $
	A(P;t) := \frac{1}{n!}\sum_{\sigma \in S_n(P)} t^{bl_P(\sigma)}$.
\end{definition}

Given a fence poset $P$ of size $n$, we can derive the following recurrence formula for $A(P;t)$  by looking at the summing over all possible places of the vertex labeled $n$.
\begin{lemma}
\label{lem:rec_a}
	Let $P$ be a fence poset of size $n$. The following equality holds
	\[nA(P_{[0,n-1]};t) = \sum_{i \in \text{desc}(P)}A(P_{[0,n-1]/i};t) + \sum _{i \in \text{asc}(P)} A(P_{[0,i-1]};t)\cdot t \cdot A(P_{[f(i),n-1]};t).\]
\end{lemma}

\begin{proof}
	Consider the position of the label $n$ in $P$, note that either $n$ is in $\mathrm{desc}(P)$—in which case it can be erased without changing the number of blocks—or $n$ is in $\mathrm{asc}(P)$, in which case it creates a new block ending at the first encountered element of $\mathrm{desc}(P)$. 
\end{proof}

\begin{figure}[ht]
	\centering
	\includegraphics[width=0.9\linewidth]{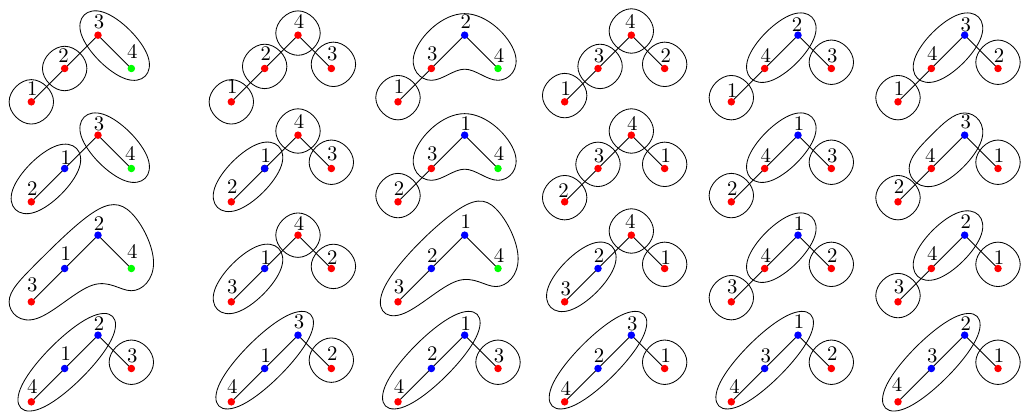}
	\caption{The $4!$ possible labelings for a fence poset of size $4$. Counting the number of blocks for each labeling gives $A(P;t) = \frac{1}{4!}(3t^4 + 10t^3 + 9t^2 + 2t)$.}
	\label{fig:all_labelings}
\end{figure}

\vspace{5mm}
Similarly, the order polynomial $\Omega(P;t)$ also respects a recurrence formula that directly comes from Theorem \ref{thm:formulmain}.

\begin{theorem}
\label{thm:rec_omega}
Let $P$ be a fence poset of size $n$, then, 
    \[\frac{d\,\Omega(P_{[0,n-1]};t)}{dt} = \sum_{\substack{0 \leq k < \ell \leq n \\ k \in \text{asc}(P), \;\ell \in \text{desc}(P) \cup \{ n\}}} \Omega(P_{[0,k-1]};t)\cdot c_1(P_{[k, \ell-1]};t) \cdot \Omega(P_{[\ell,n-1]};t).\]
\end{theorem}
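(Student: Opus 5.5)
We derive the recurrence from Theorem \ref{thm:formulmain} by differentiating the generating form of the coefficients. Write $\Omega(P;t)=\sum_k c_k(P)t^k$, so that
\[
\frac{d\,\Omega(P;t)}{dt}=\sum_{k\ge1} k\,c_k(P)\,t^{k-1}=\sum_{k\ge1}\frac{t^{k-1}}{(k-1)!}\sum_{\mu\in \mathrm{SDecompo}_k(P)} e(\mu)\prod_{P_i\in\mu}c_1(P_i).
\]
The factor $k\cdot\frac1{k!}=\frac1{(k-1)!}$ suggests distinguishing one block of $\mu$ and letting the remaining $k-1$ blocks be counted by the order polynomials of what lies to its left and right.

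For a fence, a convex subposet that is connected and has nonzero $c_1$ is an interval $P_{[k,\ell-1]}$; a disconnected convex subposet has $c_1=0$. So only decompositions of $P$ into consecutive intervals contribute. For such a $\mu$, the induced poset $(\mu,\le_\mu)$ is itself a fence on the $k$ blocks. Between two adjacent blocks the relation is determined by the single Hasse edge crossing the cut.

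Rather than distinguishing an arbitrary block, we distinguish the \emph{minimal} element of the linear extension, i.e. the block receiving label $1$. More precisely, we rewrite $e(\mu)$ as the number of labelings of the blocks by $\{1,\dots,k\}$ compatible with $\le_\mu$, and then sum over the position of a distinguished block $Q=P_{[k,\ell-1]}$. We need to check which intervals can play this role: $Q$ must be a local minimum of the block fence. The left cut must go upward from $Q$, i.e. the element at position $k$ is in $\mathrm{asc}(P)$ (or $k=0$). The right cut must also go upward, i.e. the element at position $\ell$ is in $\mathrm{desc}(P)$ (or $\ell=n$). This matches the index set of the statement.

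Once $Q$ is fixed, the remaining blocks split into standard decompositions $\mu_L$ of $P_{[0,k-1]}$ and $\mu_R$ of $P_{[\ell,n-1]}$. Removing a minimal block imposes no further relation between left and right, so the linear extensions of the remaining $k-1$ blocks are shuffles of linear extensions of $\mu_L$ and $\mu_R$:
\[
e(\mu\setminus Q)=\binom{a+b}{a}e(\mu_L)e(\mu_R),
\]
where $a=|\mu_L|$ and $b=|\mu_R|$. Substituting this gives
\[
\frac{t^{a+b}}{(a+b)!}\binom{a+b}{a}=\frac{t^a}{a!}\frac{t^b}{b!}.
\]
The sum then factors as $\Omega(P_{[0,k-1]})\,c_1(Q)\,\Omega(P_{[\ell,n-1]})$ by Theorem \ref{thm:formulmain} applied to each side, with $\Omega(\emptyset)=1$.

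The main obstacle is justifying the distinguished-block bookkeeping. The issue is that $k\,e(\mu)$ need not equal $\sum_Q e(\mu\setminus Q)$ over minimal $Q$, since that sum equals $e(\mu)$, not $k\,e(\mu)$. We therefore expect to need the alternative identity obtained by differentiating the chain-of-ideals form: by \eqref{eqn:poly_binomial}, $\Omega=\sum_{\text{chains}}\binom tk$. We would use $\frac{d}{dt}$ of the polynomial via $\Omega(P;t+1)-\Omega(P;t)$-type arguments, or equivalently the fact that $c_1$ of a convex piece arises as the derivative at $0$.

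Concretely, we would first try the following. Write $\Omega(P;t)=\sum_{\mu}\prod_i \Omega_1(P_i)$-style and use $\frac{d}{dt}\Omega(P;t)=\lim_{h\to0}\bigl(\Omega(P;t+h)-\Omega(P;t)\bigr)/h$. Then $\Omega(P;t+h)$ counts order-preserving maps into a chain with an inserted "infinitesimal" level. Taking the coefficient linear in $h$ selects exactly one block mapped to that new level, contributing $c_1$ of that block. That block must be a convex subposet whose down-neighbours map below and up-neighbours map above, which on a fence is an interval bounded by an ascent on the left and a descent (or the end) on the right. The left and right remainders then contribute $\Omega(P_{[0,k-1]};t)$ and $\Omega(P_{[\ell,n-1]};t)$ independently, since on a fence removing an interval disconnects it.
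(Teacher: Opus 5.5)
There is an orientation error where you say ``this matches the index set of the statement.'' The block receiving label $1$ is minimal for $\le_\mu$, hence an order \emph{ideal} of $P$. For $Q=P_{[k,\ell-1]}$ this forces $z_{k-1}>z_k$ and $z_\ell>z_{\ell-1}$, i.e.\ $k\in\mathrm{desc}(P)\cup\{0\}$ and $\ell\in\mathrm{asc}(P)\cup\{n\}$, which is the opposite of what you wrote. Carried out consistently, your argument proves the mirrored identity with that index set. That identity is true, but it is a different sum. For the chain $z_0<z_1$, the statement contains the term $(k,\ell)=(1,2)$, namely $\Omega(P_{[0,0]};t)\,c_1(P_{[1,1]})=t$, yet $\{z_1\}$ can never be a minimal block. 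The statement's index set ($k\in\mathrm{asc}(P)$, $\ell\in\mathrm{desc}(P)\cup\{n\}$) describes connected \emph{filters}, i.e.\ local maxima of the block fence. So you must distinguish the block receiving the \emph{largest} label, or equivalently run your argument on the dual poset, which has the same order polynomial. This is what the paper does.

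The ``main obstacle'' you then raise is not real, so there is no need to retreat to the heuristic $h\to 0$ argument. Write $m$ for the number of blocks, to free $k$ for positions. You never need $m\,e(\mu)$: the factor $m$ is already used up in $m/m!=1/(m-1)!$. What you need is $e(\mu)=\sum_Q e(\mu\setminus\{Q\})$ over the maximal blocks $Q$, obtained by conditioning on which block gets the last label. A maximal block is a filter of $P$, and $(\mu,Q)\mapsto(Q,\mu\setminus\{Q\})$ is a bijection onto pairs $(J,\mu')$ with $J\neq\emptyset$ a filter and $\mu'\in\mathrm{SDecompo}_{m-1}(P\setminus J)$. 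Indeed, blocks of $\mu'$ remain convex in $P$, no cycle can pass through $J$ because nothing lies above it, and $\le_\mu$ restricts to $\le_{\mu'}$. Hence, by Theorem~\ref{thm:formulmain} applied to $P\setminus J$ (with $\Omega(\emptyset;t)=1$),
\[
\frac{d\,\Omega(P;t)}{dt}=\sum_{J}c_1(J)\sum_{m\ge 1}\frac{t^{m-1}}{(m-1)!}\sum_{\mu'\in\mathrm{SDecompo}_{m-1}(P\setminus J)}e(\mu')\prod_{P_i\in\mu'}c_1(P_i)=\sum_{J}c_1(J)\,\Omega(P\setminus J;t).
\]
Keeping only connected $J=P_{[k,\ell-1]}$ and using your shuffle computation (i.e.\ $P\setminus J=P_{[0,k-1]}\sqcup P_{[\ell,n-1]}$) finishes the proof. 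With these two fixes, your first argument is exactly the paper's.

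Your fallback is also a valid, independent route once made precise as the identity $\Omega(P;t+h)=\sum_{I}\Omega(I;t)\,\Omega(P\setminus I;h)$ over ideals $I$, differentiated in $h$ at $h=0$; it bypasses Theorem~\ref{thm:formulmain} entirely. But there too the new levels sit at the top. So the distinguished piece $P\setminus I$ is a filter with no up-neighbours outside it, not a piece with ``down-neighbours below and up-neighbours above'' as you describe.
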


\begin{proof}
{\small
	\begin{align*}
		\frac{d\,\Omega(P_{[0,n-1]};t)}{dt} &=\frac{d}{dt}\Big(\sum_{k=0 }^{n} \frac{t^k}{k!}\!
		\sum_{\substack{\mu\in \mathsf{SDecompo}(P)\\ \mu=\{P_1,\dots,P_k\}}}e(\mu)\prod_{i=1}^{k}c_1(P_i)\Big) \\
		&= \Big(\sum_{k=0}^{n} \frac{t^{k-1}}{(k-1)!}\!
		\sum_{\substack{\mu\in \mathsf{Decompo}(P)\\ \mu=\{P_1,\dots,P_k\}}}e(\mu)\prod_{i=1}^{k}c_1(P_i)\Big)  \\
        &\textnormal{(by setting } k' =k \textnormal{ and } P_{k}=J )\\
        &= \sum_{\substack{J \in \textnormal{coonected filter}(P) \\ J \neq \emptyset \quad }} c_1(J) \Big(\sum_{k'=0}^{n-1} \frac{t^{k'}}{k'!}\!
		\sum_{\substack{\mu\in \mathsf{Decompo}(P\setminus \{J \})\\ \mu=\{P_1,\dots,P_k'\}}}e(\mu)\prod_{i=1}^{k'}c_1(P_i)\Big) \\
        &(\text{since there is no decomposition into $0$ subposets and $P\setminus J$ is a set of size at most $n-1$} )\\
       &= \sum_{\substack{J \in \textnormal{connected filter}(P) \\ J \neq \emptyset \quad }} c_1(J) \cdot \Omega(P \setminus J ;t). \\
	\end{align*}
    let $k$ be the leftmost element of $J$ and $\ell$ be the rightmost element of $J$, then $k \in \textnormal{asc}(P)$ and $\ell \in \textnormal{desc}(P) \cup \{n \}$. Moreover, $P \setminus J$ is the product of $P_{[0, k-1]}$ and $P_{[\ell, n-1]}$. Therefore, 

\[\frac{d\,\Omega(P_{[0,n-1]};t)}{dt} = \sum_{\substack{0 \leq k < \ell \leq n \\ k \in \text{asc}(P), \;\ell \in \text{desc}(P) \cup \{ n\}}} \Omega(P_{[0,k-1]};t)\cdot c_1(P_{[k, \ell-1]};t) \cdot \Omega(P_{[\ell,n-1]};t).\]

}
\end{proof}

Combining these two recurrence formulas gives the main theorem of this article. 

\begin{theorem}\label{thm:fence_stat}
	\label{thm: stat_fence_equiv}
	Let $P$ be a fence poset of size $n$. Then,
	\[
	\Omega(P;t) = A(P;t) = \frac{1}{n!}\sum_{\sigma \in S_n} t^{bl_P(\sigma)}.
	\]
\end{theorem}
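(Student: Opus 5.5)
We prove $\Omega(P;t)=A(P;t)$ by strong induction on $n=|P|$. The two recurrences already established do most of the work: Lemma~\ref{lem:rec_a}, which comes from the position of the label $n$, and Theorem~\ref{thm:rec_omega}, which comes from removing a connected filter. Both polynomials vanish at $t=0$, since every coefficient with $k\ge 1$ carries a factor of $t$; for $\Omega$ this follows from \eqref{eqn:poly_binomial}, and for $A$ every labeling has at least one block. It therefore suffices to show $\frac{d}{dt}A(P;t)=\frac{d}{dt}\Omega(P;t)$.

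\textbf{Base case and induction hypothesis.} For $n=1$ we have $A=\Omega=t$. For the inductive step we assume $A(Q;t)=\Omega(Q;t)$ for every fence $Q$ of size less than $n$. We also take $A(\emptyset;t)=\Omega(\emptyset;t)=1$ as a convention.

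\textbf{Differentiating the recurrence for $A$.} We differentiate Lemma~\ref{lem:rec_a}. This gives $n A'(P)$ on the left. On the right we obtain derivatives of $A$ evaluated on the smaller fences $P_{/i}$, $P_{[0,i-1]}$ and $P_{[f(i),n-1]}$, together with the undifferentiated term $\sum_{i\in \mathrm{asc}(P)} A(P_{[0,i-1]})A(P_{[f(i),n-1]})$.

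\textbf{Expanding the smaller derivatives.} By the induction hypothesis each of these smaller derivatives equals $\Omega'$ of the same fence. We expand each one using Theorem~\ref{thm:rec_omega}, which writes it as a sum over intervals $[k,\ell)$ with $k$ an ascent and $\ell$ a descent or the right end. The goal is to regroup all resulting terms by the interval $[k,\ell)$ as it sits inside $P$. After regrouping, the coefficient of $\Omega(P_{[0,k-1]})\Omega(P_{[\ell,n-1]})$ should become $n\,c_1(P_{[k,\ell-1]})$. That is exactly $n$ times the expansion of $\Omega'(P)$ given by Theorem~\ref{thm:rec_omega}.

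\textbf{The $c_1$ identity.} Concretely, the regrouped coefficient for a fixed interval $J=P_{[k,\ell-1]}$ collects three kinds of contribution:
\begin{itemize}
\item terms with the removed vertex $i$ inside $J$, which give $c_1(J_{/i})$ for descents $i$;
\item terms with $i$ outside $J$ but in the surrounding pieces;
\item the undifferentiated term, in the case $i=k$, $f(k)=\ell$.
\end{itemize}
So we must verify a linear identity for $c_1$ of fences, essentially
\[
|J|\,c_1(J)=\sum_{i\in\mathrm{desc}(J)}c_1(J_{/i})+[\text{$J$ has a single ascent}],
\]
together with the fact that the outer terms recombine into $(n-|J|)\,c_1(J)$. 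For the latter we apply the induction hypothesis once more, in the form $mA=\dots$, to the outside pieces $P_{[0,k-1]}$ and $P_{[\ell,n-1]}$.

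\textbf{Checking the identity.} With $r$ ascents and $s$ descents in $J$, Lemma~\ref{lem:c1_fence} reduces the displayed identity to a one-line check. Removing a descent gives $c_1(J_{/i})=\frac{(r-1)!(s-1)!}{(r+s-1)!}$. Hence
\[
s\cdot\frac{(r-1)!(s-1)!}{(r+s-1)!}=\frac{(r-1)!\,s!}{(r+s-1)!}=(r+s)\,c_1(J).
\]
This holds for $r\ge2$. For $r=1$ the indicator supplies the missing constant, since then $J$ is a root followed by descents, with $c_1=1/(s+1)$. Deleting a descent $i$ must be interpreted as $P_{/i}$ with the correct block structure; one needs $f$ to behave as stated, and I would double-check the boundary cases.

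\textbf{Main obstacle.} I expect the hardest step to be the bookkeeping in the regrouping. One must show that every term from removing a descent outside $J$ (the $P_{/i}$ terms) and every product term lines up with the corresponding term of $(n-|J|)c_1(J)\,\Omega\Omega$. This needs care because deleting $i$ shifts the positions and can merge descent runs. If the direct regrouping becomes unwieldy, my fallback is different. I would show instead that $n\Omega(P)$ satisfies the same recurrence as in Lemma~\ref{lem:rec_a}, by integrating Theorem~\ref{thm:rec_omega} against the inductive hypothesis. Since that recurrence determines $A$ uniquely from smaller fences, this would also conclude the proof.
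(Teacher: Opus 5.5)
Your outline follows the paper's proof: induction on $n$, differentiate Lemma~\ref{lem:rec_a}, replace $A$ by $\Omega$ on the smaller fences, expand each derivative with Theorem~\ref{thm:rec_omega}, and use Lemma~\ref{lem:c_1_to_desc} together with Lemma~\ref{lem:rec_a} on the outer pieces. The gap is the claim that, after this, everything collapses to $n\,c_1(J)$ for each interval $J=P_{[k,\ell-1]}$.

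The outer pieces do not carry the same boundary data as $P$:
\begin{itemize}
\item The leftmost element $\ell$ of $P_{[\ell,n-1]}$ is an ascent of the piece, although $\ell\in\textnormal{desc}(P)$. So the recurrence for that piece contains an extra term $t\,\Omega(P_{[f(\ell),n-1]})$.
\item In $P_{[0,k-1]}$, the ascents $i<k$ lying in the ascending run of $k$ have $f(i)$ truncated to $k$. This gives extra terms $t\,\Omega(P_{[0,i-1]})$.
\item Expanding $\Omega'(P_{[0,i-1]})$ produces removed intervals ending at $i\in\textnormal{asc}(P)$.
\item Expanding $\Omega'(P_{[f(i),n-1]})$ produces removed intervals starting at $f(i)\in\textnormal{desc}(P)$.
\end{itemize}
After reindexing by the outer pair $(k,\ell)$, these four families are $t\,\Omega(P_{[0,k-1]})\,\Omega(P_{[\ell,n-1]})$ times $c_1$ of intervals \emph{other} than $J$. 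They cancel only through a second identity. For $f(k)<\ell$ it reads
\[
\sum_{\substack{i\in\textnormal{desc}(P)\\ f(i)=\ell}} c_1(P_{[k,i-1]})
+\sum_{\substack{i\in\textnormal{asc}(P),\; k<i\\ f(i)=f(k)}} c_1(P_{[i,\ell-1]})
= c_1(P_{[f(k),\ell-1]})
+\sum_{\substack{i\in\textnormal{asc}(P)\\ f(i)=\ell}} c_1(P_{[k,i-1]}).
\]
For $\ell=f(k)$ it is a symmetric cancellation of harmonic sums. This is the role of Lemma~\ref{lem:magic}, used with $k<i$ in the second sum. It is proved by telescoping with $\frac{(a-1)!\,s!}{(a+s)!}=\frac{(a-1)!\,(s-1)!}{(a+s-1)!}-\frac{a!\,(s-1)!}{(a+s)!}$.

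The effect is visible already for the chain $z_0\prec z_1$. For $J=P_{[1,1]}$, the $t$ that you would book as $(n-|J|)\,c_1(J)\,\Omega(P_{[0,0]})$ actually comes from the term $\Omega'(P_{[0,0]})\cdot t$. In that term $P_{[0,0]}$ is the removed interval and $\{z_1\}$ is the new block. Your proposal does not identify this identity. Your fallback, checking the recurrence of Lemma~\ref{lem:rec_a} for $n\,\Omega(P)$ ``by integrating'', is the same computation, so it does not avoid the identity either.

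Separately, your first $c_1$ identity has the wrong correction term. It should be $\mathbf{1}_{\ell=f(k)}$, meaning that $J$ has \emph{no descent} (an ascending chain), not that $J$ has a single ascent. Your computation $s\cdot\frac{(r-1)!(s-1)!}{(r+s-1)!}=(r+s)\,c_1(J)$ holds for every $r\ge 1$ once $s\ge 1$. It fails only for $s=0$, where the sum is empty and $|J|\,c_1(J)=r\cdot\frac{1}{r}=1$. As you stated it, a descending chain of length $m\ge 2$ would give $1=2$. The corrected form, Lemma~\ref{lem:c_1_to_desc}, also matches your own correct remark that the undifferentiated term contributes exactly when $f(k)=\ell$.
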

\begin{proof}
	Section \ref{section:proof_fence} is devoted to the proof of this result. 
\end{proof}

\begin{figure}
    \centering
    \includegraphics[width=0.3\linewidth]{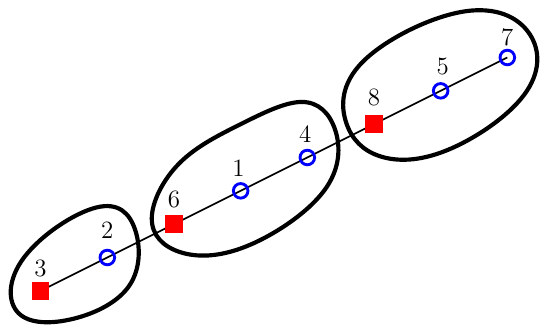}
    \caption{An example for the ascent chain case. The permutation $32614857$ gives the block partition $|32|614|857$.}
    \label{fig:foata}
\end{figure}

\begin{remark}
    If the poset is just an ascent chain (see Figure \ref{fig:foata}), by reading the labels from left to right, a new block is created every time a label is greater than all the labels at his left. Therefore we recover the cycle notation used in the Foata correspondence, and therefore $n!\cdot \Omega(P;t) = \sum_{\sigma \in S_n} t^{|\textnormal{cycle}(\sigma)|}$.
\end{remark}

\section{Variations of the statistic}
In this section, we present two conjectures for variations of the statistics, the first one for skew-shape posets which are a generalization of fence posets and the second one for crown posets. 

\subsection{Skew shape posets}

Consider any skew shape $\lambda / \mu$, represented as a skew Young diagram.  
The poset $P_{\lambda / \mu}$ is the poset whose elements are the boxes $X(\lambda / \mu) = \{\, (i, j) \mid j \in [\mu_i + 1, \lambda_i] \,\},$ with covering relations given by 
$(i, j) \succeq (i + 1, j)
\quad \text{and} \quad
(i, j) \succeq (i, j + 1).$

\begin{figure}[ht]
	\centering
	\includegraphics[width=0.4\linewidth]{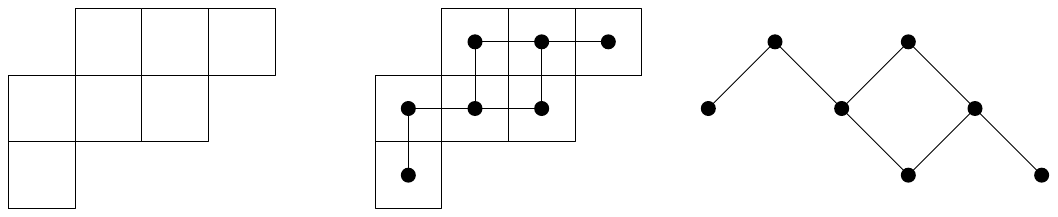}
	\caption{The skew shape $431/1$ and its associated skew shape poset.}
	\label{skew_shape_poset}
\end{figure}

Given a vertex associated to a cell $(i,j)$, we call respectively \new{left child}, \new{right child}, \new{left parent}, \new{right parent}, the vertices associated at respective position $(i+1,j)$, $(i,j+1)$, $(i,j-1)$, $(i-1,j)$.  Given a skew shape poset  $P$, a \new{leftmost element} of $P$ is a element of $P$ which doesn't have a left child nor a right child. 

We gather in the next proposition some basic observations of skew shape that will be useful for defining the statistic. 
\begin{proposition}
\label{prop:skew_shape_propiete}
	Let $P$ be a connected skew-shape poset.
	\begin{enumerate}
		\item $P$ has a unique left-most element 
		\item If an element $x \in P$ has a right parent $x_{+}$ and a right child $x_{-}$. Then there exists an element $y \in P$, such that $y$ is right child of $x_{+}$ and the right parent of $x_{-}$.  
	\end{enumerate}
\end{proposition}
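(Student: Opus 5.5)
The plan is to work directly with the coordinates $X(\lambda/\mu)=\{(i,j)\mid \mu_i<j\le\lambda_i\}$ and use only three facts: the monotonicity $\lambda_1\ge\lambda_2\ge\cdots$, $\mu_1\ge\mu_2\ge\cdots$; the fact that cells within a row form an interval; and, for (1), the connectedness condition recalled in the theorem of Ferroni, Morales and Panova, namely $\lambda_{i+1}>\mu_i$ for every $i<\ell$. Here $\ell$ is the number of rows.

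For part (2), I take the names literally from the definitions: $x=(i,j)$, its right parent is $x_+=(i-1,j)$ and its right child is $x_-=(i,j+1)$. The candidate is $y=(i-1,j+1)$. Then $y$ is the right child of $x_+$ and the right parent of $x_-$ by definition, so it only remains to check $y\in X(\lambda/\mu)$. Since $x_+$ is a cell, $j>\mu_{i-1}$, hence $j+1>\mu_{i-1}$. Since $x_-$ is a cell, $j+1\le\lambda_i\le\lambda_{i-1}$. This is the usual fact that skew diagrams are closed under completing a $2\times2$ square, and it is routine.

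For part (1), the main obstacle is pinning down which extremal cell is meant, because the child/parent names in the definitions are not fully symmetric. I would read ``leftmost'' geometrically, in the $45^\circ$-rotated picture of Figure~\ref{skew_shape_poset}: it is the cell with no neighbour directly below it in the same column, $(i+1,j)$, and no neighbour immediately to its left in the same row, $(i,j-1)$. I then claim the unique such cell is $(\ell,\mu_\ell+1)$.

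Existence: this cell lies in the last row, so it has nothing below it, and it is the first cell of its row, so it has nothing to its left.

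Uniqueness: any cell $(i,j)$ with $j>\mu_i+1$ has the left neighbour $(i,j-1)$, so a candidate must be of the form $(i,\mu_i+1)$. If $i<\ell$, then connectedness gives $\lambda_{i+1}\ge\mu_i+1$, and $\mu_{i+1}\le\mu_i<\mu_i+1$. Hence $(i+1,\mu_i+1)$ is a cell directly below it, so it is not leftmost. Only $i=\ell$ remains.

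If the other reading of the conventions is intended (the cell with no parents at all), the same argument applies after rotating the diagram by $180^\circ$. The unique cell is then $(1,\lambda_1)$ and connectedness is used in the same way, so I expect no real difficulty beyond fixing the convention.
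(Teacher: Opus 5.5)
The paper states this proposition as a ``basic observation'' and gives no proof, so there is nothing to compare your argument against. Your direct coordinate check is the natural proof, and both parts are correct under the reading you commit to.

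In (2), $y=(i-1,j+1)$ is a cell because $\mu_{i-1}<j<j+1\le\lambda_i\le\lambda_{i-1}$. Connectedness is not even needed there.

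In (1), take ``leftmost'' to mean no left child $(i+1,j)$ and no left parent $(i,j-1)$. The interval structure of rows forces $j=\mu_i+1$, and the condition $\lambda_{i+1}>\mu_i$ forces $i=\ell$. You take connectedness from the condition quoted from Ferroni--Morales--Panova. If it is meant in the poset sense, add a one-line check: $\lambda_{i+1}\le\mu_i$ would put rows $\le i$ and rows $>i$ in disjoint sets of columns. Cross-row covers only join cells in the same column, so no cover would connect the two groups.

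Your closing aside, however, is wrong and should be removed.
\begin{itemize}
\item Under the reading ``no parents at all'' (maximal elements), uniqueness fails. For $\lambda/\mu=22/1$, both $(1,2)$ and $(2,1)$ have no parent.
\item The cell $(1,\lambda_1)$ need not be parent-free. For a single row of length $3$ it is the minimum, with left parent $(1,2)$.
\item What a $180^\circ$ rotation of your argument actually proves is that the cell with no right child and no right parent is unique, namely $(1,\lambda_1)$. That is a ``rightmost'' element, a different notion.
\item The paper's literal wording, ``no left child nor a right child'', describes minimal elements, and (1) is false for it. For $\lambda/\mu=21$, both $(1,2)$ and $(2,1)$ are minimal.
\end{itemize}
So your geometric reading is not one of two interchangeable conventions. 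It is the only one under which (1) holds. It is also the one consistent with the fence case, where the root is the first element of the path, i.e.\ the cell with neither a left parent nor a left child. It likewise matches the block conditions, which sort non-root cells by whether they lack a left parent or a left child. Say this explicitly in place of the aside.
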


For skew-shape, inspired by Theorem \ref{thm:formulmain}, we will define the notion of blocks from skew-shapes. Since down-sets and up-sets of skew shape poset are also skew shape poset, for every standard decomposition $\mu = (P_1, P_2, \dots, P_k)$, the $P_i$ are also skew shape posets.

\begin{definition}[Blocks in skew shapes]
	Let $P$ be a skew shape poset, and let $\sigma$ be a labeling of this poset.  
	A \emph{block} $B$ is a convex subposet of $P$ satisfying the following conditions (see Fig.~\ref{fig:skew_shape_ex_bloc} for an example):
	\begin{itemize}
        \item $B$ is a connected skew-shape poset, its leftmost element is denoted by $x$ and is called the \new{root}, 
		\item for each element $y \in B$, if $y$ doesn't have a left parent, then $\sigma(y) < \sigma(x)$,
		\item for each element $y \in B$, if $y$ doesn't have a left child, then $\sigma(y) > \sigma(x)$. 
	\end{itemize}
\end{definition}

\begin{figure}[ht]
    \centering
    \begin{minipage}{0.39\textwidth}
        \centering
        \includegraphics[width=\linewidth]{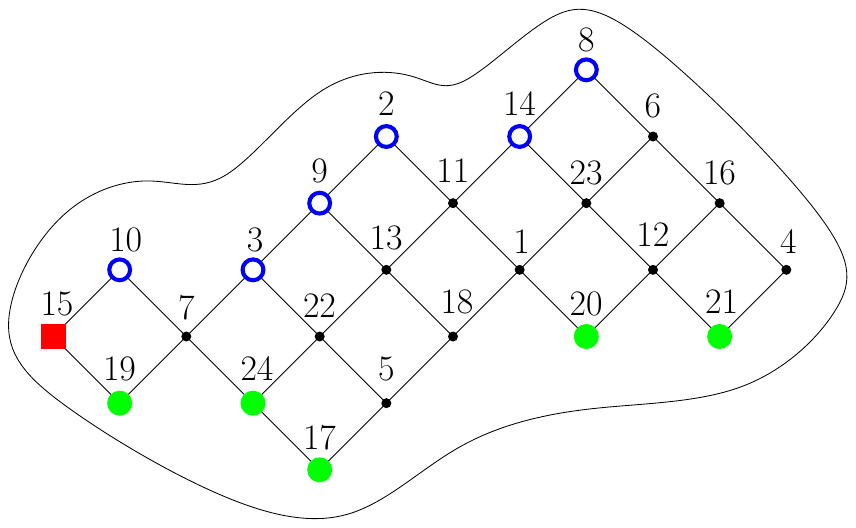}
        \caption{A block whose labeling respects the inequalities.}
        \label{fig:skew_shape_ex_bloc}
    \end{minipage}
    \hfill
    \begin{minipage}{0.6\textwidth}
        \centering
        \includegraphics[width=\linewidth]{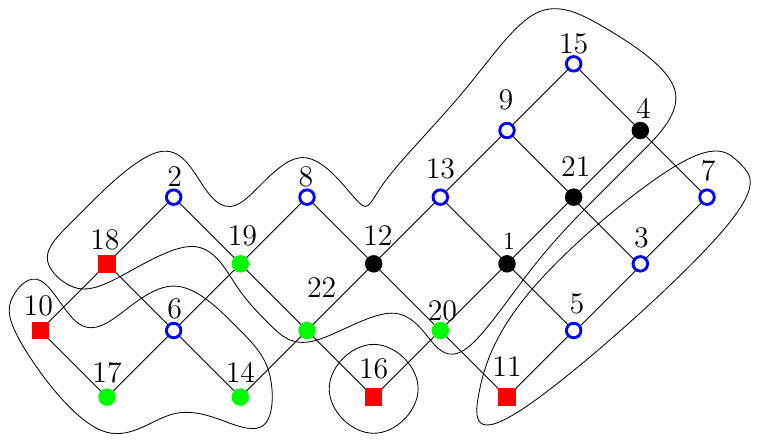}
        \caption{A valid partition into blocks. Notice that $10<18$, $16<18$, and $11<18$; thus, the partition is valid.}
        \label{fig:partition_skew_shape}
    \end{minipage}
\end{figure}

Given a block partition $B = (B_1, \dots, B_k)$, we denote $x_i$ the root of $B_i$. We define the order relation $\prec$ and valid block partitions as for the fence poset case. Figure \ref{fig:partition_skew_shape} illustrates a valid block partition. In fact, this construction is unique.

\begin{lemma}
	Let $P$ a connected skew-shape poset. And let $\sigma$ be a labeling of its vertices. Then, there exists a unique partition of the vertices which is a valid block partition. 
\end{lemma}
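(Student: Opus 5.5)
We prove existence and uniqueness together by induction on $n=|P|$, following the proof of Lemma~\ref{lem:valide_block_fence}: we remove a suitable extremal cell, apply the induction hypothesis, and check that the new cell can go in exactly one place. For fences the removed element was the rightmost one. Its skew-shape analogue is a \emph{corner} cell $y$ that is the rightmost cell of the bottom row, or the bottom cell of the rightmost column. Such a cell is maximal or minimal in $P$ (with the covering relations above, a cell with no right or lower neighbour is minimal). Removing it leaves a skew shape $P'=P\setminus\{y\}$. If $P'$ is disconnected, we work with each connected component separately; the statement holds for disconnected shapes by treating components independently, since roots and blocks are defined within connected pieces. By the symmetry $\sigma\mapsto n+1-\sigma$ together with rotating the diagram by $180^\circ$ (which swaps "no left parent" with "no left child" and maps skew shapes to skew shapes), we may assume that $y$ has no left child. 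The block conditions then require $\sigma(y)>\sigma(\text{root})$ whenever $y$ is not itself a root.

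\textbf{Key steps.} First, show that restricting a valid block partition of $(P,\sigma)$ to $P'$ gives a valid block partition of $(P',\sigma|_{P'})$. This needs two checks. (a) Removing a corner from a connected skew block $B$ leaves a skew shape with the same root, provided $y$ is not the root, i.e.\ not the unique leftmost element from Proposition~\ref{prop:skew_shape_propiete}(1). If $B\setminus\{y\}$ becomes disconnected, that would contradict convexity combined with Proposition~\ref{prop:skew_shape_propiete}(2), and this must be ruled out. (b) The parent/child status of the remaining cells inside $P$ is unchanged: "left parent" and "left child" are defined in the ambient poset, and deleting a corner only removes a child of its neighbours. Validity is inherited because $<$ restricted to $P'$ is coarser.

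Second, use the induction hypothesis to obtain the unique valid partition $B'$ of $P'$. Then $y$ is either a singleton root or joins a block $B'_j$ whose cells are adjacent to $y$. The neighbours of $y$ (its right parent and left parent) lie in at most two blocks, and $y$ is comparable to them, so validity forces the following. If $\sigma(y)$ exceeds the root labels of the relevant neighbouring blocks, $y$ cannot join them, since its label would violate the root inequality in the wrong direction. In that case $y$ must be a new singleton block, and validity holds because $y$ lies above those blocks in both orders. Otherwise, making $y$ a singleton root would violate validity with the adjacent block $B'_j$ that is $<$-below $\{y\}$ while having a larger root. So $y$ must join a block, and we must show it is a unique one.

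\textbf{Main obstacle.} The hard case is when $y$ has two neighbours (a right parent and a left parent) lying in different blocks $B'_a$ and $B'_b$. We need to show that exactly one of them can absorb $y$ while keeping a connected skew shape, satisfying the label inequality, and keeping validity. I would argue that $y$ must join the block with the smaller root label. Joining the other block would create a relation between the two blocks that is inconsistent with $\prec$. Proposition~\ref{prop:skew_shape_propiete}(2) should guarantee that the chosen block stays a connected skew shape, because the cell diagonally completing the square is present and lies in the same block. Verifying this compatibility, together with the possibly necessary recomputation of "left parent" after adding $y$, is where I expect the real work to be.
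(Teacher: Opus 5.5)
There is a genuine gap in your first key step. With your choice of $y$ (an outer corner of the bottom row), the restriction of a valid block partition of $P$ to $P'$ is in general \emph{not} a block partition, so both (a) and (b) fail. The corner $y$ is the left child of the cell directly above it, so deleting $y$ changes that cell's constraint. Deleting $y$ can also disconnect its block, and Proposition~\ref{prop:skew_shape_propiete}(2) does not prevent this, since $y$ has no children at all.

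\textbf{Against (a).} Take $P=P_{22/1}$ with $a=(1,2)$, $b=(2,1)$, $y=(2,2)$ and $\sigma(a)=1$, $\sigma(b)=2$, $\sigma(y)=3$. Then $P$ itself is a block with root $b$: $a$ has no left parent and $1<2$, and $y$ has no left child and $3>2$. It is the only valid partition:
\begin{itemize}
\item $\{y\}$ alone fails, since $y<a$ but $3>1$;
\item $\{a,y\}$ has root $y$ and lies below $\{b\}$, while $3>2$;
\item $\{b,y\}$ lies below $\{a\}$, while $2>1$.
\end{itemize}
Its restriction to $P'$ is $\{a,b\}$, which is disconnected.

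\textbf{Against (b).} Take $P=P_{22}$ with labels $1,2,3,4$ on $(1,1),(1,2),(2,1),(2,2)$. Again $P$ itself is the unique valid partition, with root $(2,1)$; the cell $(1,2)$ is unconstrained because it has both a left parent and a left child. After deleting $y=(2,2)$, the cell $(1,2)$ has no left child and would need a label larger than $3$. The valid partition of $P'$ is $\{(1,1),(2,1)\},\{(1,2)\}$.

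In both examples the valid partition of $P$ arises from that of $P'$ by \emph{merging} two blocks through $y$. Your dichotomy (new singleton root, or join one block $B'_j$) excludes this. Your rule ``join the block with the smaller root label'' also fails: in both examples that block contains the cell above $y$, so adding $y$ makes $y$ its new southwest cell, hence its new root, and the result is invalid. Separately, the $180^\circ$ rotation does not exchange left parent and left child. It exchanges left parent with right child and left child with right parent, and it sends the root of a block to its northeast cell. The order-reversing symmetry that preserves roots is the reflection in the anti-diagonal.

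The paper avoids all of this by inducting on the number of rows and deleting the whole top row $Y=\{y_1,\dots,y_r\}$. This works for three reasons: $Y$ is a filter, the remaining rows stay connected, and no cell of $P'=P\setminus Y$ has a left parent or a left child in $Y$. So restricting a block to $P'$ just removes its top row (or the whole block), keeping it connected with the same root and the same constraints. Hence the restriction of any valid partition of $P$ is the unique valid partition of $P'$, and no merging can occur.

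The cells $y_1,\dots,y_r$ are then inserted from left to right, each playing the role of the rightmost element in the fence proof. When $y_i$ is inserted, its only neighbours already present are $y_{i-1}$ and the cell $z$ below it. Their block roots $x_+$ and $x_-$ are already known, with the conventions $\pm\infty$. Since $z<y_{i-1}$, validity gives $\sigma(x_-)\le\sigma(x_+)$, with equality exactly when they share a block, which $y_i$ must then join by convexity. Otherwise, the three cases $\sigma(y_i)<\sigma(x_-)$, $\sigma(x_-)<\sigma(y_i)<\sigma(x_+)$ and $\sigma(y_i)>\sigma(x_+)$ force, respectively, joining the block of $z$, starting a new block, or joining the block of $y_{i-1}$.

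What your proposal is missing is an insertion order in which each new cell is neither the left parent nor the left child of a cell already present; a bottom corner violates this. Removing an entire top row achieves it while keeping $P'$ a connected skew shape, which deleting a single cell cannot do in general (for instance for the $2\times 2$ square).
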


\begin{proof}
	We proceed by induction on $s$, the number of rows of the skew shape associated to $P$.\\
    For the base case, if $s = 1$, there, $P$ is a line, therefore it is a fence poset, the the statement holds.\\ 
    For the induction case, assume that the statement holds for all posets with $s$ rows. By induction, let $P$ be a skew shape poset with $s+1$ rows and $\sigma$ a labeling of $P$. Take $Y = \{y_1, \dots, y_r \}$ the set of elements of $P$ whose associated cell in the skew shape have minimal first coordinate and sorted increasingly regarding the $j$ coordinate. Let $P' = P \setminus Y$. If $B = \{B_1, \dots, B_k\}$ is a valid block partition of $P$, then induced block partition on $P'$ is also a valid block partition. We will determine successively for each $y_j$ in which block they belong. Let $1 \leq i \leq r$. let $x_+$ the root of the block containing $y_{i-1}$ (with the convention $\sigma(x_+) = + \infty$ if $i=1$). Let $x_-$ be the root of the block containing $z$, the left child of $y_i$ (with the convention $\sigma(x_-) = - \infty$ if $y_i$ has no left child). Notice that since the left child of $y_{i-1}$ is the left parent of $z$, $\sigma(x_-) \leq  \sigma(x_+)$. We have the following cases 
    \begin{itemize}
        \item If $\sigma(x_-) = \sigma(x_+)$. It means that $y_{i-1}$ and $z$ are in the same block. Therefore, since using Proposition \ref{prop:skew_shape_propiete}, $y_i$ must also be in this block, and this create no problems. 
        \item If $\sigma(x_-) < \sigma(y_i) <  \sigma(x_+)$, then the only valid possibility is that $y_i$ is the root of its block. 
        \item If $\sigma(x_-)  <  \sigma(x_+) < \sigma(y_i)$, then the only valid possibility is that $y_i$ is block containing $x_+$. 
        \item If $\sigma(x_-)  <  \sigma(x_+) < \sigma(y_i)$, then the only valid possibility is that $y_i$ is block containing $x_+$. 
    \end{itemize}
    In all the cases, there is exactly one way to determine what is the root of the block containing $y_i$. Therefore, the block decomposition is uniquely determined. Figure \ref{fig:rec_skew_shape} illustrates the construction. 
\end{proof}
\begin{figure}[ht]
    \centering

    \begin{subfigure}{0.48\textwidth}
        \centering
        \includegraphics[width=\textwidth]{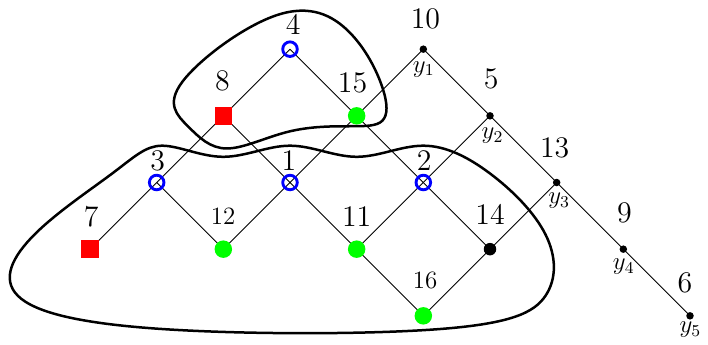}
        \caption{The block partition \\$B' = (\{ 1, 2, 3, 7, 11, 12, 14, 16\}, \{4, 8, 15\})$}
    \end{subfigure}
    \hfill
    \begin{subfigure}{0.48\textwidth}
        \centering
        \includegraphics[width=\textwidth]{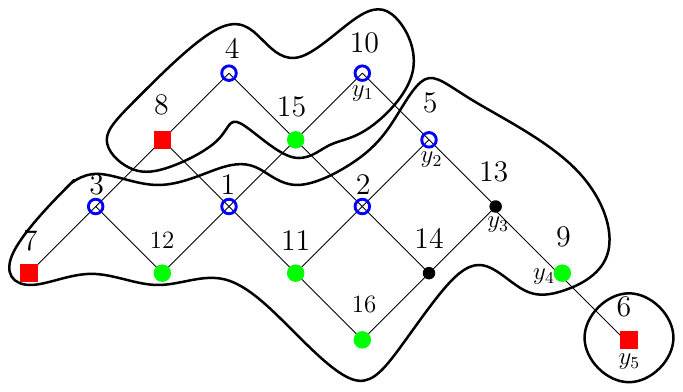}
        \caption{The new block partition $B$}
    \end{subfigure}

    \caption{Illustration of the way of construction  $B$ from $B'$. For example, for $i=2$, $\sigma(x_-) =7$ and since $y_1$ is in the block whose root has label $8$ $\sigma(x_+) = 8$, therefore, $y_2$ is in the same block as $x_-$.}
    \label{fig:rec_skew_shape}
\end{figure}

Given a skew shape poset $P$ and a labeling of its elements, we denote by \new{$\tilde{bl}_P(\sigma)$} the number of blocks in the unique valid block decomposition of $(P,\sigma)$.

\begin{conjecture}
	\label{conj:skew shape stat}
	Let $P$ be the cell poset of a skew shape of size $n$. Then
	\[
	n! \, \Omega(P;t) = \sum_{\sigma \in S_n(P)} t^{\widetilde{bl}_P(\sigma)},
	\]
\end{conjecture}

\begin{remark}
	The leading term corresponds to the case where each block has size $1$.  
	In this situation, the labeling must be a linear extension of the poset.  
	Therefore, it is clear that the conjecture holds for the leading term.
\end{remark}

\subsection{circular fence poset}

\begin{definition}
    A circular fence poset $P$ of size $n$ is a poset whose Hasse diagram is a cycle. It can be described by a collection of position $2 \leq \alpha_1 < \dots < \alpha_k \leq n$ on the ground set $\{z_1, \dots, z_n\}$ such that 
    \begin{itemize}
        \item $\emptyset \neq \{\alpha_1, \dots, \alpha_{k-1} \}$,
        \item For all $1 \leq i < n$, and$i \notin \{\alpha_1, \dots, \alpha_{k \}}$, $z_i < z_{i+1}$, 
        \item for all $1 \leq i < n$, and$i \in \{\alpha_1, \dots, \alpha_{k \}}$, $z_i > z_{i+1}$,
        \item $z_{n} < z_1$. 
    \end{itemize}
\end{definition}

\begin{figure}[ht]
    \centering
    \includegraphics[width=0.3\linewidth]{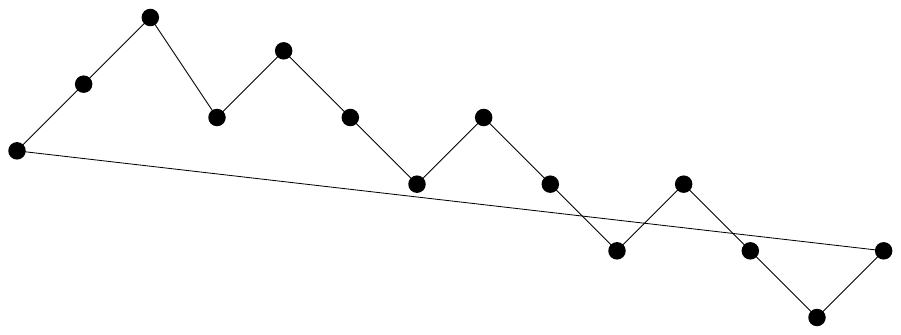}
    \caption{A circular fence poset of size $14$.}
    \label{circular_fence}
\end{figure}

We extend the definition of $\textnormal{desc}(P)$ and $\textnormal{asc}(P)$ on cycle cover naturally and saying that $z_1 \in \textnormal{asc}(P)$ (since it is greater that the "element at his left" which is $z_n$). 

Convex subposets of a circular fence poset $P$ are either fence posets or $P$ itself. We therefore extend the definition of block fences by saying that $P$ is a block if and only if there exists an element $ x \in P$, such that for all $y \in P$,  $y \in \textnormal{desc(P)}$ if and only if $\sigma(y) > \sigma(x)$. In this case $x$ is the root of the block $P$. 

As for the skew shape case, given a partition into blocks $B = \{B_1, \dots, B_k\}$, we say that $B_i \prec B_j$ if and only if $\sigma(x_i) < \sigma(x_j)$ where $x_i$ and $x_j$ are respectively the roots of $B_i$ and $B_j$. We say that $B$ is a valid block partition if and only if $B_i < B_j$ implies that $B_i \prec B_j$. 

The next lemma will allow us to define a statistic for circular fence posets similar to the block statistic of fence posets.  

\begin{lemma}
\label{lem:existence_valid_block}
    Let $P$ be a circular fence poset and $\sigma$ a labeling of its vertices. Then there exists a unique valid block partition on $(P, \sigma)$. 
\end{lemma}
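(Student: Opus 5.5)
We reduce to Lemma \ref{lem:valide_block_fence} by cutting the cycle at a well-chosen place. Let $m$ be the element of $P$ with the largest label $\sigma(m)=n$, and split into two cases according to whether $m \in \textnormal{asc}(P)$ or $m\in\textnormal{desc}(P)$.

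\textbf{Step 1: when $P$ itself is a block.} First decide whether the trivial partition $\{P\}$ is valid. Since the relation $<$ between distinct blocks is vacuous here, $\{P\}$ is valid exactly when $P$ is a block. Being a block requires some $x$ with $\sigma(y)>\sigma(x)$ for all $y\in\textnormal{desc}(P)\setminus\{x\}$ and $\sigma(y)<\sigma(x)$ for all $y\in \textnormal{asc}(P)\setminus\{x\}$. This says $\max\sigma(\textnormal{asc}) \le \sigma(x)\le \min \sigma(\textnormal{desc})$ with $x$ attaining one side, so $x$ is determined when it exists. In that case we must show no other valid partition exists. A partition into at least two blocks consists of fence blocks, which are arcs of the cycle. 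Going around the cycle, the roots $x_1,\dots,x_k$ must satisfy $\sigma(x_1)<\sigma(x_2)<\dots$ whenever consecutive arcs are joined by an ascending edge, and decrease across descending edges. Moreover each arc boundary edge compares a block element to the next root. Closing the cycle with $k\ge 2$ should then contradict the hypothesis that all ascent labels lie below all descent labels. I would check this by looking at the block whose root has the maximum root label and the edge entering it.

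\textbf{Step 2: otherwise, cut the cycle.} If $P$ is not a block, every valid partition consists of fence blocks, that is, arcs. I claim there is a canonical edge that is never interior to a block, namely an edge at $m$. If $m\in\textnormal{asc}(P)$, then $m$ cannot be a non-root element of a block, since it would need a label smaller than its root. So $m$ is a root, and the edge to its left separates two blocks. If $m\in\textnormal{desc}(P)$ and $m$ is not a root, then $m$ can be in any block as a descent, so we use a different cut. In that case take the edge to the right of the minimum-labelled element, or symmetrically the edge to the left of the maximum-labelled ascent element (the ascent with the largest label). That element is forced to be a root, because no root in a block can beat it, except $P$ itself being a block, which Step 1 excludes. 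Cutting the cycle at the left edge of this forced root gives a fence poset $P'$ whose leftmost element is that root.

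\textbf{Step 3: transfer existence and uniqueness.} Valid block partitions of $(P,\sigma)$ that do not use the whole $P$ as a block, and whose blocks do not straddle the cut, should correspond bijectively to valid block partitions of $(P',\sigma)$. The only extra relation in $P$ is the cut edge between the rightmost block of $P'$ and the first block. Since the first block's root carries the maximum ascent label, this relation is automatically compatible with $\prec$ in the required direction; this needs a short check of the orientation. Lemma \ref{lem:valide_block_fence} then gives exactly one such partition.

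The main obstacle is Step 1 and the boundary of Step 2. We must show that the whole-cycle block and a genuine partition cannot both be valid, and that the forced root really exists whenever $P$ is not a block. I would attack both by tracking the greedy left-to-right scan from the proof of Lemma \ref{lem:valide_block_fence} around the cycle.
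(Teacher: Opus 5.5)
Your strategy of cutting the cycle at an element forced to be a root and then applying Lemma \ref{lem:valide_block_fence} can be made to work. However, the proposal does not contain the argument that carries it, and the reason you give for the key step is wrong. In Step 2 you assert that the maximum-labelled ascent $a$ is a root because ``no root in a block can beat it''. The block axioms only force a non-root ascent to lie below its root. So a block whose root $r$ is a descent with $\sigma(r)>\sigma(a)$ may contain $a$ as a non-root element, and nothing local forbids this.

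What forbids it is validity around the whole cycle. If $r$ is a descent root, its left neighbour lies in another block $B'$, and the block of $r$ is $<B'$. So the root of $B'$ has label $>\sigma(r)>\sigma(a)$, and is therefore again a descent. Iterating leftwards, you traverse the blocks cyclically with strictly increasing root labels, which is impossible once there are at least two blocks. This cyclic-chain argument is the missing idea, and it also closes your other two open steps:
\begin{itemize}
\item Run inside the fence $P'$, a leftward chain of descent roots must stop at an ascent root of label at most $\sigma(a)$. This is exactly the orientation check you defer in Step 3: the root of the last block of $P'$ has label $<\sigma(a)$ whenever $P'$ has at least two blocks.
\item It settles the uniqueness in Step 1 that you leave at ``should contradict''. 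A valid partition with at least two blocks has $a$ as a root and restricts to a valid partition of $P'$ with at least two blocks. But if $P$ is a block, the greedy scan of $P'$ from $a$ yields a single block, contradicting Lemma \ref{lem:valide_block_fence}.
\end{itemize}
Your other suggested cut, to the right of the minimum-labelled element, is not safe. For the crown $z_4<z_1>z_2<z_3>z_4$ with $\sigma(z_1),\dots,\sigma(z_4)=1,3,4,2$, the unique valid partition is $\{z_3\},\{z_4,z_1,z_2\}$, and that edge is interior to a block.

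The paper never needs this global argument, because it always works with the element $x$ with $\sigma(x)=n$. If $x$ is an ascent, it is trivially a root and the cut-edge condition is trivial; this is your first case. If $x$ is a descent, it can never be a root, since a descent root must have a smaller label than the root of the block to its left. Hence $x$ sits in its left neighbour's block and can simply be deleted, splicing its neighbours with the relation $x$ had to its right neighbour. This lowers $|\textnormal{desc}(P)|$, and the paper inducts on it; the whole-cycle block case is absorbed by the induction. That deletion is the idea you missed when you concluded that the descent case needs ``a different cut''.

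Conversely, once the cyclic-chain lemma is proved, your route is more uniform. Always cutting at the left of $a$ treats the block and non-block cases at once, and needs no case split on the position of $n$. It also never leaves the class of circular fence posets, whereas the paper's induction passes through cycles with fewer than two descents, which are not circular fence posets.
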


\begin{proof}
    We proceed by induction of the number of size of $\textnormal{desc}(P)$.\\
   Let $P$ be a cyclic cover poset of size $n$ such that the lemma holds for all cyclic cover posets with smaller size of the descent set. Let $x \in P$ such that $\sigma(x) = n$. 
    \begin{itemize}
        \item If $x\in \textnormal{asc}(P)$. Then $x$ must be the root of its block. Lets now, consider $P'$ the fence poset derived from $P$, by removing the cover relation between $x$ and $y$, the element at his left. Using Lemma \ref{lem:valide_block_fence}, there exists a unique valid block decomposition $B = \{B_1, \dots, B_k\}$ of $(P', \sigma)$. Since $\sigma(x) = n$, $\sigma(x)$ is greater or equal than the label of the root containing $y$, therefore, $B$ is also the unique block decomposition of $P$.
        \item If $x\in \textnormal{desc}(P)$. Let $y$ be the element at the left of $x$ and $z$ the element at the right of $x$. The element $x$ cannot be the root of its block therefore $x$ is in the block containing $y$. Consider $P'$ the cyclic poset obtained by removing $x$, and adding the relation $y <_{P'} z$ if $x <_{P} z$ and $y >_{P'} z$ if $x >_{P} z$. By the induction hypothesis, let $B' = \{B_1', \dots, B_k'\}$ be the unique block decomposition of $(P', \sigma_{|P'})$. Let $i$ such that $y \in B'_i$. Let $B_j = \begin{cases}
            B_i \cup \{x \} \quad \textnormal{ if } j=i\\
            B_j \quad \quad \quad \quad \textnormal{otherwise}. 
        \end{cases}$, then $B = (B_1, \dots, B_k)$ is the unique block decomposition of $(P, \sigma)$. 
    \end{itemize}
    The base cases are the cases where $\textnormal{desc}(P) = \emptyset$ in which case $x\in \textnormal{asc}(P)$, hence the statement holds. 
\end{proof}

Given a circular fence poset $P$ and a labeling $\sigma$, we denote by $\widehat{bl}_P(\sigma)$ the size of the unique valid block partition of $(P, \sigma)$. 

\begin{conjecture}
\label{conj:circular_poset_stat}
Let $P$ be a circular fence poset of size $n$, then 
    \[
    n! \; \Omega(P; t) = \sum_{\sigma \in S_n(P)} t^{\widehat{bl}_P(\sigma)}. 
    \]
\end{conjecture}

\section{Applications of the statistic}

\subsection{Probabilistic approach}

Let $P$ be a fence poset. We consider the random variable $\sigma$ which follow a uniform distribution of the set of labeling of $P$. We define the random variable $X_P := bl_P(\sigma)$.For $0 \leq k \leq n$, From Theorem \ref{thm:fence_stat}, it follows that $\mathbb{P}(X_P = k) = c_k(P)$. 

This give a formula for the coefficients of the Ehrhart polynomial of the order polytope. 

\begin{lemma}
    Let $\mathcal{E}(P; t) = \sum_{k=0}^{n} i_k(P)t^{k}$ be the Ehrhart polynomial of the order polytope $\mathcal{O}(P)$. Then for all $0 \leq k \leq n$,
    \[
    i_k(P) =\mathbb{E}\left[ \binom{X_p}{k} \right]
    \]
\end{lemma}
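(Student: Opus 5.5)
The plan is to combine Theorem \ref{thm:fence_stat} with Stanley's identity $\mathcal{E}(P;t)=\Omega(P;t+1)$, and then expand $(t+1)^{X_P}$ by the binomial theorem inside an expectation. One caveat about indexing comes first. The coefficient of $t^k$ in $\Omega(P;t+1)$ is $\mathbb{E}\big[\binom{X_P}{k}\big]$. So the identity holds as stated with $\mathcal{E}(P;t)$ understood as $\Omega(P;t+1)$, i.e. counting lattice points in $t\cdot\mathcal{O}(P)$ with the dilation convention used in the Proposition of Stanley quoted in the background section. I adopt that convention throughout.

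First, Theorem \ref{thm:fence_stat} gives $\Omega(P;t)=\frac{1}{n!}\sum_{\sigma\in S_n}t^{bl_P(\sigma)}$. Since $\sigma$ is uniform on $S_n$, this can be rewritten as
\[
\Omega(P;t)=\mathbb{E}\left[t^{X_P}\right].
\]
In other words, $\Omega(P;\cdot)$ is the probability generating function of $X_P$. This is exactly the statement $\mathbb{P}(X_P=k)=c_k(P)$ recorded just before the lemma.

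Second, substitute $t+1$ for $t$. This is legitimate because both sides are polynomials in $t$: the right-hand side is a finite sum, since $1\le X_P\le n$. We get
\[
\mathcal{E}(P;t)=\Omega(P;t+1)=\mathbb{E}\left[(1+t)^{X_P}\right].
\]
Expanding $(1+t)^{X_P}=\sum_{k=0}^{n}\binom{X_P}{k}t^k$ and using linearity of expectation over the finite sample space gives
\[
\mathcal{E}(P;t)=\sum_{k=0}^{n}\mathbb{E}\left[\binom{X_P}{k}\right]t^k.
\]
Comparing coefficients of $t^k$ then yields $i_k(P)=\mathbb{E}\big[\binom{X_P}{k}\big]$ for every $0\le k\le n$.

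There is no real obstacle, since all the combinatorial content sits in Theorem \ref{thm:fence_stat}. The only delicate point is pinning down the shift convention between $\Omega$ and $\mathcal{E}$ noted at the start. As a sanity check, $k=0$ gives $i_0=\mathbb{E}[1]=1$, matching the fact that the constant term of an Ehrhart polynomial is $1$. Similarly, $k=n$ gives $i_n=\mathbb{P}(X_P=n)=e(P)/n!$, the volume of $\mathcal{O}(P)$. Both are consistent with the convention adopted.
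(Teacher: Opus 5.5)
Your proof is correct and is essentially the paper's argument: the paper computes $i_k(P)=[t^k]\,\Omega(P;t+1)=\sum_{j} c_j(P)\binom{j}{k}=\sum_j \mathbb{P}(X_P=j)\binom{j}{k}$, which is your expansion of $\mathbb{E}\bigl[(1+t)^{X_P}\bigr]$ read off coefficient by coefficient. Your convention $\mathcal{E}(P;t)=\Omega(P;t+1)$ is the same one the paper uses (Stanley's proposition in the background section), so the indexing caveat you raise is not an actual obstruction.
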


\begin{proof}
    Recall that $\mathcal{E}(P; t) = \Omega(P;t+1)$. 
    Hence, 
    \begin{align*}
        i_k(P) &= [t^k]\Omega(P,t+1) \\
        &=\sum_{j=0}^{n} c_j(P)\binom{j}{k} \\
        &= \sum_{j=0}^{n} \mathbb{P}(X_P = j)\binom{j}{k} \\ 
        &=\mathbb{E}\left[ \binom{X_p}{k} \right]
    \end{align*}
\end{proof}

As a direct application, we have $i_1(P) = E[X_P]$ and we can apply the linearity of expectation. For $x \in P$, let $Y_x$ be the following random variable 
\[
Y_x := \begin{cases}
    1 \quad \quad \textnormal{if } x \textnormal{ is the root of its block},  \\ 
    0  \quad \quad \textnormal{otherwise}. 
\end{cases}
\]

We have $X_P = \sum_{x \in P} Y_x$, therefore 

\begin{proposition}
    Let $P$ be a fence poset of size $n$, then
    \[ 
    i_1(P) = \sum_{x \in P} \mathbb{E}[Y_x] = \sum_{x \in P}\mathbb{P}(x \textnormal{ is the root of its block}).\] 
\end{proposition}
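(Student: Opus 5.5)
This follows by combining the preceding lemma with the decomposition $X_P = \sum_{x\in P} Y_x$ and linearity of expectation.

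First, apply the preceding lemma with $k=1$. It gives $i_1(P) = \mathbb{E}\left[\binom{X_P}{1}\right] = \mathbb{E}[X_P]$. This step uses Theorem \ref{thm:fence_stat} through the identity $\mathbb{P}(X_P = j) = c_j(P)$, which holds because $\sigma$ is uniform on the $n!$ labelings.

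Second, justify pointwise that $X_P = \sum_{x\in P} Y_x$. By Lemma \ref{lem:valide_block_fence}, for each labeling $\sigma$ the valid block partition of $(P,\sigma)$ is unique. Hence ``$x$ is the root of its block'' is a well-defined event, and $Y_x$ is a well-defined random variable. Every block has exactly one root, namely its leftmost element. So the roots are in bijection with the blocks, and their number is $bl_P(\sigma)$. Summing the indicators $Y_x(\sigma)$ over $x\in P$ therefore counts the blocks, giving $X_P(\sigma) = bl_P(\sigma) = \sum_x Y_x(\sigma)$ for every $\sigma$.

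Third, apply linearity of expectation to the finite sum: $\mathbb{E}[X_P] = \sum_{x} \mathbb{E}[Y_x]$. Since $Y_x$ is an indicator, $\mathbb{E}[Y_x] = \mathbb{P}(x \text{ is the root of its block})$.

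There is no real obstacle in this argument. The only point needing care is the well-definedness of $Y_x$, and that is supplied by the uniqueness in Lemma \ref{lem:valide_block_fence}.
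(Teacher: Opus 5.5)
Your proof is correct and follows the paper's argument: the preceding lemma with $k=1$ gives $i_1(P)=\mathbb{E}[X_P]$, and then $X_P=\sum_{x\in P}Y_x$ together with linearity of expectation finishes the proof. Your remark that $Y_x$ is well defined by the uniqueness in Lemma \ref{lem:valide_block_fence}, and that roots correspond bijectively to blocks, is a welcome detail the paper leaves implicit.
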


From this statement we can derive the following inequality, 

\begin{theorem}
 Let $P$ be a fence poset of the form $z_0 < z_1<...<z_{u_1} > z_{u_1+1}>... >z_{u_1+u_2}<.... $. 
    With $n = u_1 + ...+u_r$. Let $H(n) := \sum_{k=1}^{n} \frac{1}{k+1}$. 
    \[i_1(P) \geq 1+\sum_{k=1}^{r}{H({u_k}}).\]    
\end{theorem}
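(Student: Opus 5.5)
We use the previous proposition, $i_1(P)=\sum_{x\in P}\mathbb{P}(x \text{ is the root of its block})$, and bound each probability from below by a local event that forces $x$ to be a root. By the greedy description of the unique valid block partition (the proposition after Lemma \ref{lem:valide_block_fence}), the leftmost element $z_0$ is always a root. This gives the leading term $1$. For the other elements we group them by the monotone run they belong to: run $k$ has $u_k$ elements after its starting turning point, with the $j$-th of them ($1\le j\le u_k$) lying $j$ steps into the run. We will show that the $j$-th element of a run is a root with probability at least $\frac{1}{j+1}$. Summing over $j$ then gives $H(u_k)$ for each run, and hence the bound.

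\textbf{Ascending runs.} Let $x=z_{m+j}$ be the $j$-th element of an ascending run $z_m<z_{m+1}<\dots$, so $x\in \textnormal{asc}(P)$. Consider the event that $\sigma(x)$ is the largest of the $j+1$ labels $\sigma(z_m),\dots,\sigma(z_{m+j})$; it has probability exactly $\frac{1}{j+1}$. Suppose $x$ were not a root. Its block has root some $z_p$ with $p<m+j$, and because blocks are contiguous in the fence, every element strictly between $z_p$ and $x$ lies in the same block. We need to show this contradicts the event.
\begin{itemize}
\item If $p\ge m$, then $z_p$ is one of the $j+1$ run elements. The block condition for $x\in\textnormal{asc}(P)$ requires $\sigma(x)<\sigma(z_p)$, which contradicts maximality.
\item If $p<m$, then $z_m$ is a descent (or the leftmost element), so $\sigma(z_m)>\sigma(z_p)$. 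But the block condition on $x$ gives $\sigma(x)<\sigma(z_p)<\sigma(z_m)$, again contradicting maximality.
\end{itemize}
Hence on this event $x$ is a root.

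\textbf{Descending runs.} Here $x\in\textnormal{desc}(P)$. The same event does not directly force $x$ to be a root, since descents tend to be absorbed. We will use the reversal symmetry from the proof of Lemma \ref{lem:valide_block_fence}: reverse the poset and replace $\sigma$ by $n+1-\sigma$. This turns descending runs into ascending ones. The difficulty is that root-ness is defined with respect to the leftmost element, so it is not obviously preserved by reversal. We plan to handle this by proving directly that the number of blocks is invariant under reversal, or at least its expectation. For the expectation, it suffices to show that $\Omega(P^{rev};t)=\Omega(P;t)$ and then apply Theorem \ref{thm: stat_fence_equiv}.

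We expect this to be the main obstacle. If it fails, the fallback is to bound $\mathbb{P}(x \text{ is a root})$ for descent elements directly. For that we would use an event such as: $\sigma(x)$ is the smallest among the run elements at or after it and larger than the previous root. We would then compute that this has probability at least $\frac{1}{j+1}$, with $j$ counted from the appropriate end of the run.

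Finally we sum the per-element bounds over all runs, add $1$ for $z_0$, and check that the indexing of each run's $u_k$ elements matches the harmonic sum $H(u_k)=\sum_{j=1}^{u_k}\frac{1}{j+1}$.
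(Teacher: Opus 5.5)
You handle $z_0$ and the ascending runs correctly, and that part is exactly the paper's argument. The paper's final display actually drops the $+1$ that you correctly account for. The gap is the descending runs: you leave them unproved, and neither of your two routes would close them.

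\emph{The reversal route.} Your main plan only controls a global quantity. An identity $\Omega(P^{rev};t)=\Omega(P;t)$, or even equidistribution of $bl$ under reversal, gives $\mathbb{E}[X_P]=\mathbb{E}[X_{P^{rev}}]$. What you need is a per-element bound on $\mathbb{P}(x \text{ is a root})$ in the single decomposition $X_P=\sum_x Y_x$, to add to the per-element bounds you already proved for the ascending runs of $P$. Root indicators of a reflected structure are a different decomposition of the same total. Two separate lower bounds on the same number $i_1(P)$ give only their maximum, not their sum.

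\emph{The fallback event.} Your fallback event is backwards. Whether $x$ is a root is determined by the relative order of $\sigma(z_0),\dots,\sigma(x)$ alone, since the left-to-right construction never revisits a prefix. So labels to the right of $x$ are irrelevant. Worse, a descent $x$ whose label exceeds the root of its left neighbour's block is absorbed into that block. Your event therefore forces $x$ \emph{not} to be a root. For $z_0<z_1>z_2$ it reads $\sigma(z_2)>\max(\sigma(z_0),\sigma(z_1))$, which is exactly the case where $z_2$ is not a root.

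\emph{The fix.} What is missing is the mirror event. Let $x=z_{m+j}$ lie in a descending run $z_m>z_{m+1}>\cdots$, where the peak $z_m$ is an ascent. Take the event that $\sigma(x)$ is the \emph{smallest} of $\sigma(z_m),\dots,\sigma(z_{m+j})$; it has probability $\frac{1}{j+1}$. Suppose $x$ lay in the block of $z_{m+j-1}$, with root $z_p$. The descent condition gives $\sigma(x)>\sigma(z_p)$.
\begin{itemize}
\item If $p\ge m$, this contradicts minimality.
\item If $p<m$, contiguity puts the peak $z_m$ in that block as a non-root ascent. Then $\sigma(x)<\sigma(z_m)<\sigma(z_p)$, again a contradiction.
\end{itemize}
This is what the paper means by ``the same reasoning applies for descent chains.''

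Equivalently, the symmetry in the proof of Lemma \ref{lem:valide_block_fence} is order-duality, not left--right reflection. Turn the fence upside down and replace $\sigma$ by $|P|+1-\sigma$. This keeps the leftmost element in place and swaps $\textnormal{asc}$ and $\textnormal{desc}$ elsewhere. The block conditions and the validity condition are invariant under it, so it preserves the block partition and every root. Your ascending-run argument then transfers verbatim, and your worry about the leftmost element does not arise.
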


\begin{proof}
    Notice that if $x$ is an ascent (resp. descent), the label of the root of the block containing $x$ is smaller or equal (resp. greater or equal) than the label of $x$. Now, take a sequence, $z_{u_k-1} > z_{u_k} < z_{u_k+1}< \dots < z_{u_{k}+u_{k+1}}$. For $1 \leq j \leq u_{i+1}$, if for all $0 \leq j' < j$,  $\sigma(z_{u_k+j'}) < \sigma(z_{u_k+j})$, then the label of the root of the block containing $z_{u_k + j -1 }$  is smaller than $\sigma(z_{u_k+j})$, hence, $z_{u_k+j}$ is the root of its block. The probability that $\forall_{0 \leq j'< j} \; \sigma(z_{u_k+j'}) < \sigma(z_{u_k+j})$ is $1/({j+1})$. The same reasoning apply for descent chains. Therefore, for all $0 \leq k < r$, with the convention that $u_0 = 0$, 
    \[
    \mathbb{P}(z_{u_k +j} \textnormal{ is the root of its block}) \geq \frac{1}{j+1}.
    \] 
    Therefore
\begin{align*}
    i_1(P) &= \sum_{k=0}^{r-1} \sum_{j = 1}^{u_k} \mathbb{P}(z_{u_k+j} \textnormal{ is the root of its block}) \\
    &\geq \sum_{k=0}^{r-1} \sum_{j = 1}^{u_k} \frac{1}{j+1} \\
    &= \sum_{k=0}^{r-1} H(u_{k+1}).
\end{align*}
    \end{proof}

\section{Lattice path matroid}

Recently, Ferroni, Morales and Panova proved the Ehrhart positivity of Lattice path matroid by expressing this a sum of order polynomial of some fence posets \cite{ferroni2026ehrhartpositivitylatticepath}. Therefore, we can use Theorem \ref{thm: stat_fence_equiv} in order to give an statistic interpretation of the coefficients the Ehrhart polynomial. 

\begin{definition}
Let $\lambda/\mu$ be a skew shape, and let $L$ and $U$ be its southeast and northwest boundary lattice paths, respectively, viewed as non-crossing paths from $(0,0)$ to $(m,r)$. The \new{lattice path matroid associated with $\lambda/\mu$}, denoted $M(\lambda/\mu)$, is the lattice path matroid $M[L,U]$ whose bases are the sets of positions of the north steps of lattice paths from $(0,0)$ to $(m,r)$ that lie weakly between $L$ and $U$.
\end{definition}

\begin{figure}[ht]
    \centering
    \includegraphics[width=0.2\linewidth]{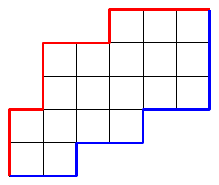}
    \caption{The Upper and lower paths for the skew shape $ \lambda /\mu = 66642/311$. A lattice path is in $M(\lambda /\mu)$ if it stays between the blue and the red path. }
    \label{fig:lattice path}
\end{figure}

The Ehrhart polynomial of lattice path matroid $M(\lambda/ \mu)$, denoted $\mathcal{E}(M(\lambda/\mu))$ is the Ehrhart polynomial of the polytope consisting of the convex hulls of the elements of $M(\lambda /\mu)$. 
Following the notation of \cite{ferroni2026ehrhartpositivitylatticepath}, the minimum ribbon $\gamma_{\min}$ is obtained by starting at the lower-leftmost cell of $\lambda /\mu$ and moving east whenever possible, moving north only when an east step would leave the skew shape.

\begin{figure}[ht]
    \centering
    \begin{minipage}{0.45\textwidth}
        \centering
        \includegraphics[width=0.3\linewidth]{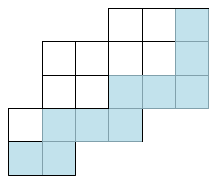}
        \caption{The minimal ribbon.}
        \label{fig:minimal_ribbon}
    \end{minipage}
    \hfill
    \begin{minipage}{0.45\textwidth}
        \centering
        \includegraphics[width=0.3\linewidth]{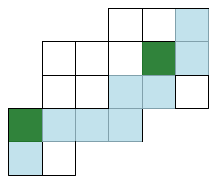}
        \caption{A ribbon and its high peaks.}
        \label{fig:high_peaks}
    \end{minipage}
\end{figure}

The set of ribbons starting from the South-West cell of $\lambda / \mu$ to the North-Est cell of $\lambda / \mu$ is denoted $\mathcal{R}(\lambda / \mu)$. 
Given a ribbon $\gamma \in \mathcal{R}(\lambda / \mu)$, a \new{peak} of $\gamma$ is a pair of a North step directly followed by a East step. 
The \new{high peaks} of $\gamma$ are the peaks of $\gamma$ that are not also peaks of $\gamma_{\min}$.
We denote by $\mathsf{hp}(\gamma)$, the set of high peaks of $\gamma$. Any skew-shape inside the skew-shape $\lambda /\mu$ is also viewed as a poset with the order relation that a cell $c_1$ cover a cell $c_2$ if and only if $c_1$ is directly at the West or at the North of $c_2$. We denote by $I(\lambda /\mu )$ the set of ideals of the the skew shape poset associated to $\lambda / \mu$. 
By noticing that for any poset $P$, $\Omega(P;t+1) = \sum_{F \text{ is a filter of } P}\Omega(P \setminus F; t)$, we can derive the following expression for $\mathcal{E}(\lambda/\mu)$. 

\begin{theorem}[{\cite{ferroni2026ehrhartpositivitylatticepath} Thm. 3.12}]
\label{thm:lattice_path_fmp}
Let $\lambda/\mu$ be a skew shape. Then the Ehrhart polynomial of the lattice path matroid $M(\lambda / \mu)$ can be expressed as a sum of order polynomials of fence posets. 
    \[
\mathcal{E}{(\lambda / \mu;t)} = \sum_{\substack{\gamma\in\mathcal{L}(\lambda/\mu)}} \sum_{\substack{F\subset\gamma\setminus\mathsf{hp}(\gamma) \\ F \textnormal{ is a filter }}}
\Omega(\gamma\setminus F; t).
\]

Let $\lambda/\mu $ a skew shape and let 
\[
S(\lambda / \mu) := \bigcup_{\substack{\gamma\in\mathcal{L}(\lambda/\mu)}} \bigcup_{\substack{F\subset\gamma\setminus\{\textbf{hp}(\gamma) \\ F \textnormal{ is a filter }}}
(\gamma\setminus F). 
\]

First, lets compute the cardinality of $S(\lambda / \mu)$.

\begin{lemma}
\label{lem:card_S_I}
    Let $\lambda /\mu $ a skew shape, then $|S(\lambda /\mu)| = |I(\lambda / \mu)|$. 
\end{lemma}
\begin{proof}
We have  
    \begin{align*}
        |I(\lambda / \mu)| &= |\textnormal{vertices}(\mathcal{P}(M(\lambda/\mu)))| \\ 
        &= \operatorname{ehr}\bigl(\mathcal{P}(M(\lambda/\mu)),1) \\ 
        &= \sum_{s \in S} \Omega(s, 1) &(\text{by applying Theorem \ref{thm:lattice_path_fmp}})\\
        &= |S(\lambda /\mu)|.
    \end{align*}
\end{proof}
\end{theorem}
 Let $\gamma \in \mathcal{L}(\lambda /\mu )$ and let $F$ be a filter of $\gamma \setminus \langle \mathbf{hp}(\gamma) \rangle$. 
Let $ \textcolor{blue}{ \phi }(\gamma \setminus F)$ be smallest ideal of $\lambda/ \mu$ containing $(\gamma \setminus F)$. Let $I \in I(\lambda / \mu)$ a ideal. Let \textcolor{blue}{$\mathsf{sh}(I)$} be the set of cells  $c \in \lambda / \mu $ such that the cell at the northwest of $c$ is not in $I$. Since a lattice path of inside $\lambda /\mu$ is in direct bijection with the ideal of the cells of $\lambda / \mu$ under this lattice path, we extend the definition of $\mathsf{sh}$ on lattice paths by identifying a lattice path of $M[L, U]$ with the ideal of the cells below that path. 

\begin{figure}[ht]
    \centering
    \includegraphics[width=0.2\linewidth]{shade_example.pdf}
    \caption{A lattice path $x \in M[L,U]$ in red and $\mathsf{sh}(x)$ in blue. }
    \label{fig:lattice path matroid}
\end{figure}

\begin{lemma}
\label{lem:bij_S}
Let $\lambda /\mu $ be a skew shape, then $\mathsf{sh}$ is a bijection from 
     $I(\lambda / \mu)$ to $S$ with $\mathsf{sh}^{-1} = \phi$. 
\end{lemma}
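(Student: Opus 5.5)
The plan is to exploit Lemma \ref{lem:card_S_I}: since $|S(\lambda/\mu)| = |I(\lambda/\mu)|$ and both sets are finite, it suffices to show that $\mathsf{sh}$ maps $I(\lambda/\mu)$ into $S(\lambda/\mu)$ and that $\phi\circ \mathsf{sh} = \mathrm{id}$ on $I(\lambda/\mu)$. Then $\mathsf{sh}$ is injective, hence bijective by cardinality, and $\phi$ is its inverse. It is also cheap to check $\mathsf{sh}\circ\phi = \mathrm{id}$ on $S$ directly, which removes the dependence on the cardinality lemma.

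\emph{Step 1: $\mathsf{sh}(I)\in S(\lambda/\mu)$.} Fix an ideal $I$, i.e.\ a lattice path $x$ between $L$ and $U$. The cells $c$ whose northwest neighbour is outside $I$ form a band along the boundary of $I$ that is one cell thick diagonally. This band is a connected ribbon-like skew shape hugging $x$ from below. I would build a ribbon $\gamma\in\mathcal{R}(\lambda/\mu)$ containing $\mathsf{sh}(I)$, and a filter $F\subset \gamma\setminus \mathsf{hp}(\gamma)$, such that $\gamma\setminus F=\mathsf{sh}(I)$.

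To construct $\gamma$, follow the cells of $\mathsf{sh}(I)$ from southwest to northeast. Where two consecutive cells of the shade are only diagonally adjacent, or where the shade is cut off by the boundary of $\lambda/\mu$, fill in using the minimal ribbon $\gamma_{\min}$. The added cells are then exactly the cells of $\gamma_{\min}$-type peaks and the tails near $L$, and these form $F$.

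Two things must be verified. First, $F$ is a filter of $\gamma$, meaning it is upward closed toward the northwest. Second, $F$ avoids the high peaks. For the second point, note that a peak of $\gamma$ created by a corner of $x$ is a genuine cell of $\mathsf{sh}(I)$. A peak not coming from $x$ coincides with a peak of $\gamma_{\min}$, so it is not high.

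\emph{Step 2: $\phi(\mathsf{sh}(I))=I$.} Every cell of $\mathsf{sh}(I)$ lies in $I$, since the shade sits under the path $x$. Hence $\phi(\mathsf{sh}(I))\subseteq I$. Conversely, each cell $d$ of $I$ lies southeast of some cell of $\mathsf{sh}(I)$: move northwest from $d$ until the next step leaves $I$. Therefore $d$ lies in the ideal generated by $\mathsf{sh}(I)$. This gives equality.

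\emph{Step 3 (optional): $\mathsf{sh}(\phi(\gamma\setminus F))=\gamma\setminus F$.} Here one uses that $\gamma\setminus F$ is already a union of diagonal-maximal cells of its generated ideal. This follows because $\gamma$ is a ribbon, i.e.\ it has no $2\times 2$ square, and because removing a filter keeps the remaining cells on the northwest frontier.

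I expect Step 1 to be the main obstacle. The difficulty is making the definitions of ribbon, peak and high peak in the FMP convention precise enough to identify exactly which ribbon $\gamma$ and filter $F$ realize a given shade. I would first settle the case where $x$ stays strictly inside the shape, where $\gamma$ is essentially determined by $x$. The boundary interactions with $L$ and $U$ would then be handled by the $\gamma_{\min}$ filling rule. If this bookkeeping becomes too heavy, I would fall back on proving only injectivity of $\mathsf{sh}$ (Step 2) and surjectivity-via-$\phi$ (Step 3), and then conclude with Lemma \ref{lem:card_S_I}.
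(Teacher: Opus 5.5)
Your main route is the paper's: you use $|S(\lambda/\mu)|=|I(\lambda/\mu)|$, check $\mathsf{sh}(I)\in S(\lambda/\mu)$, and verify $\phi\circ\mathsf{sh}=\mathrm{id}$. Your diagonal-walk argument in Step 2 is an equivalent and more explicit form of the paper's remark that $\max(\mathsf{sh}(I))=\max(I)$, and your Step 1 is more careful than the paper's one-line appeal to the absence of $2\times 2$ squares.

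One caution concerns the optional Step 3, on which your fallback rests. The reason you give there never uses that $F$ avoids $\mathsf{hp}(\gamma)$, and that hypothesis is essential. In the $2\times 2$ square, take $\gamma$ going north then east and let $F$ be its (high) peak. The ideal generated by $\gamma\setminus F$ then also contains the southeast cell, which lies in its shade, so $\mathsf{sh}(\phi(\gamma\setminus F))\neq\gamma\setminus F$.
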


\begin{proof}
Let $I \in I(\lambda / \mu)$.By definition, $\mathsf{sh}(I)$ don't contain a $2  \times 2$-square. Hence, $\mathsf{sh}(I) \in S$.

Since the two set have same cardinality (Lemma \ref{lem:card_S_I}), we only have to prove that $\phi( \mathsf{sh}(I)) =(I)$. Notice that $ \textnormal{max}(\mathsf{sh}(I)) = \textnormal{max}(I)$. Hence $\phi(\mathsf{sh}(I)) = I$. 
\end{proof}

Combining Theorem \ref{thm:lattice_path_fmp} and Lemma \ref{lem:bij_S}, we can therefore express the Ehrhart polynomial of a a lattice path matroid using as a sum over its elements. 

\begin{proposition}
\label{prop:main_lattice_path}
    Let $\lambda / \mu$ be a skew shape, Then the Ehrhart polynomial of the lattice path matroid $M(\lambda / \mu)$ can be expressed as 
    \[
    \mathcal{E}{(M(\lambda / \mu); t)} =  \sum_{x \in M(\lambda / \mu)}\Omega(\mathsf{sh(x)}; t).
    \]
\end{proposition}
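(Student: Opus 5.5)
The plan is to reindex the double sum of Theorem~\ref{thm:lattice_path_fmp} so that it runs over the bases of $M(\lambda/\mu)$ rather than over pairs $(\gamma,F)$. Write that sum as
\[
\mathcal{E}(M(\lambda/\mu);t)=\sum_{(\gamma,F)}\Omega(\gamma\setminus F;t),
\]
where $\gamma$ ranges over $\mathcal{L}(\lambda/\mu)$ and $F$ over the filters of $\gamma$ contained in $\gamma\setminus\mathsf{hp}(\gamma)$. I would first check that the assignment $(\gamma,F)\mapsto \gamma\setminus F$ is injective. The lattice path $\gamma$ is the minimal ribbon through the North-East end of $\gamma\setminus F$. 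The high peaks cannot be removed, so they pin down $\gamma$ once the surviving part is known. With that, the double sum becomes a sum over the set $S(\lambda/\mu)$ of Theorem~\ref{thm:lattice_path_fmp}:
\[
\mathcal{E}(M(\lambda/\mu);t)=\sum_{s\in S(\lambda/\mu)}\Omega(s;t).
\]
If injectivity turns out to fail, I would reinterpret $S(\lambda/\mu)$ as a multiset. Lemma~\ref{lem:card_S_I} is consistent with either reading, since it was obtained by evaluating this same sum at $t=1$.

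Second, I would use Lemma~\ref{lem:bij_S}: $\mathsf{sh}$ is a bijection from $I(\lambda/\mu)$ to $S(\lambda/\mu)$ with inverse $\phi$. Substituting $s=\mathsf{sh}(I)$ then gives
\[
\mathcal{E}(M(\lambda/\mu);t)=\sum_{I\in I(\lambda/\mu)}\Omega(\mathsf{sh}(I);t).
\]

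Third, I would identify ideals with bases. A lattice path from $(0,0)$ to $(m,r)$ staying weakly between $L$ and $U$ is determined by the set of cells of $\lambda/\mu$ lying below it, and this set is an ideal of the skew shape poset. Conversely, the boundary of every ideal is such a path. The bases of $M(\lambda/\mu)$ are exactly the North-step sets of these paths, and the paper already extends $\mathsf{sh}$ to a path $x$ through its ideal. Relabeling the sum by $x\in M(\lambda/\mu)$ therefore yields the proposition.

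The main obstacle is the first step. I need $\Omega(s;t)$ to be attached to each $s$ exactly once, with the poset structure on $\gamma\setminus F$ agreeing with the poset structure on $\mathsf{sh}(I)$ viewed as cells of $\lambda/\mu$. Both are ribbon (fence) posets with the West/North cover relation, so the isomorphism types agree. The delicate point is injectivity, equivalently that Lemma~\ref{lem:card_S_I} counts $S$ without multiplicity. I would verify it directly: from $s=\gamma\setminus F$, recover $\gamma$ by extending $s$ from its last cell along the minimal ribbon. This recovers $F$ uniquely, because $F$ avoids the high peaks and $\gamma\setminus F$ is a down-set of $\gamma$.
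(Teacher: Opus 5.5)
Your proposal is correct and is the paper's own argument made explicit: rewrite Theorem~\ref{thm:lattice_path_fmp} as a sum over $S(\lambda/\mu)$, transport that sum along the bijection $\mathsf{sh}$ of Lemma~\ref{lem:bij_S}, and identify ideals with bases of $M(\lambda/\mu)$.

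You single out the injectivity of $(\gamma,F)\mapsto\gamma\setminus F$, which the paper uses tacitly, but it needs no direct check. Since $\phi\circ\mathsf{sh}=\mathrm{id}$, the map $\mathsf{sh}$ is injective into the set $S(\lambda/\mu)$. The number of pairs $(\gamma,F)$ equals $\mathcal{E}(M(\lambda/\mu);1)=|I(\lambda/\mu)|$. Counting then forces each $s$ to come from exactly one pair, so the multiset fallback never arises. Your direct sketch would not establish this anyway: it only follows $\gamma_{\min}$ from the North-East end, so it misses the parts of $F$ around low peaks in the interior of $\gamma$.
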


\subsection{Schubert matroids and hypersimplices}

Schubert matroid can been seen as lattice path matroid where the Southeast past $L$ consists of $k$ East steps directly followed by $n-k$ North steps. Let $M$ be a Schubert matroid. Given a lattice path $x \in M$, let \textcolor{blue}{$P_x$} the fence poset poset obtained by taking the lattice points on the path of $x$. 

Therefore we define the following statistic on fence posets. 

\begin{definition}
     The \new{truncated poset} $Tr(P)$ of a fence poset $P$ is the fence poset obtained after removing the leftmost element of $P$, all descent elements on the leftmost branch of $P$ and all ascent elements on the rightmost branch of $P$.The \new{truncated block statistic} $TrBl_P(\sigma)$ of a labeled fence poset $(P, \sigma)$ is the block statistic of the truncated poset. 
     \[ TrBl_P(\sigma) := Bl_{Tr(P)}(\sigma_{|{Tr(P)|}}).\]
\end{definition}

\begin{figure}[ht]
    \centering
    \includegraphics[width=0.3\linewidth]{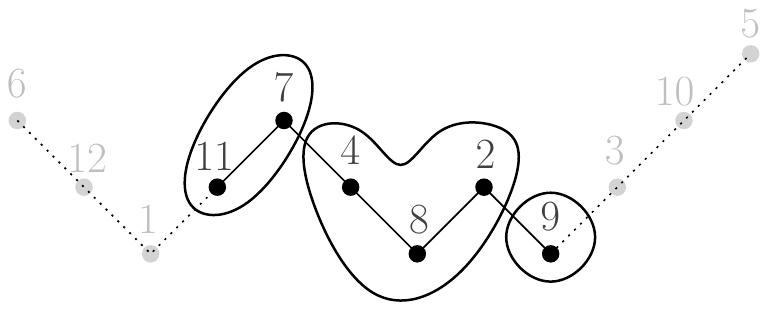}
    \caption{The truncated poset obtained after removing the first descent chain and the last ascent chain. Here, $TrBl_P(\sigma) = 3$.}
    \label{fig:truncated poset}
\end{figure}
This give a combinatorial interpretation of the coefficients of the Ehrhart polynomial of Schubert matroids. 

\begin{theorem}
    \label{thm:Ehrart_schubert}
    Let $U$ be a lattice path and $M[U]$ be the Schubert matroid associated. Then 
    \[
    \mathcal{E}(M[U];t) = \frac{1}{(n+1)!}\sum_{x \in M[U]} \sum_{\sigma \in S_{n+1}}  t^{TrBl_{P_x}(\sigma)}
    \]
\end{theorem}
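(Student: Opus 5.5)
Start from Proposition~\ref{prop:main_lattice_path}, specialized to the Schubert matroid $M[U]$, where $L$ consists of $k$ East steps followed by $n-k$ North steps:
\[
\mathcal{E}(M[U];t)=\sum_{x\in M[U]}\Omega(\mathsf{sh}(x);t).
\]
It then suffices to prove, for each single lattice path $x\in M[U]$,
\[
\Omega(\mathsf{sh}(x);t)=\frac{1}{(n+1)!}\sum_{\sigma\in S_{n+1}}t^{TrBl_{P_x}(\sigma)}.
\]
Here $P_x$ is the fence on the $n+1$ lattice points of $x$. Its ascents and descents are read off the East and North steps, with the orientation fixed so that it matches the cover relations of cells used for $\mathsf{sh}$.

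\textbf{Step 1 (combinatorial identification).} I will show that $\mathsf{sh}(x)$, viewed as a poset, is isomorphic to the fence $Tr(P_x)$. Since $L$ is a hook, the cells northwest-adjacent to $x$ that remain inside $\lambda/\mu$ form a ribbon following $x$. The exception is at the two ends: along the initial North run of $x$ and the final East run, the corresponding cells fall outside the skew shape. These cells correspond exactly to the leftmost point, the descents on the leftmost branch, and the ascents on the rightmost branch of $P_x$. I will construct an explicit bijection between the cells of $\mathsf{sh}(x)$ and the remaining lattice points of $x$. Under this bijection, a North (resp.\ East) step between consecutive points should correspond to a West/North cover between the associated cells. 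This gives $\mathsf{sh}(x)\cong Tr(P_x)$ as fences. I expect this step to be the main obstacle: the index bookkeeping at the two ends of the path, and checking that the shade is connected with precisely the stated covers, require care. I would verify it first on the hypersimplex case and on the example of the figure for the truncated poset.

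\textbf{Step 2 (apply the fence theorem).} Let $m=|Tr(P_x)|$. Theorem~\ref{thm: stat_fence_equiv} gives
\[
\Omega(Tr(P_x);t)=\frac{1}{m!}\sum_{\tau\in S_m}t^{bl_{Tr(P_x)}(\tau)}.
\]

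\textbf{Step 3 (lift to $S_{n+1}$).} The statistic $bl$ depends only on the relative order of the labels on $Tr(P_x)$. Restricting $\sigma\in S_{n+1}$ to $Tr(P_x)$ and standardizing therefore sends $S_{n+1}$ onto $S_m$, with every fiber of size $(n+1)!/m!$. Hence
\[
\frac{1}{(n+1)!}\sum_{\sigma}t^{TrBl_{P_x}(\sigma)}=\frac{1}{m!}\sum_{\tau}t^{bl_{Tr(P_x)}(\tau)}.
\]

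Summing this identity over $x\in M[U]$ and using Proposition~\ref{prop:main_lattice_path} proves the theorem.

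The degenerate case where $Tr(P_x)$ is empty also needs checking. There the sum contributes $t^0=1$, and $\mathsf{sh}(x)=\emptyset$ gives $\Omega=1$, so the two sides agree.
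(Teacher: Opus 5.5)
Your proposal follows the paper's proof step for step: Proposition~\ref{prop:main_lattice_path} reduces everything to a single path $x$, the identification $\mathsf{sh}(x)\cong Tr(P_x)$ lets you apply Theorem~\ref{thm: stat_fence_equiv}, and since $bl$ depends only on the relative order of the labels on $Tr(P_x)$, the sum lifts to $S_{n+1}$; the paper itself only asserts the identification with ``notice that''.

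When you write out Step~1, fix the orientation. $\mathsf{sh}(x)$ consists of the cells of $\lambda/\mu$ whose northwest corner is a lattice point of $x$, with a North step giving an upward cover and an East step a downward one. The cell southeast of a lattice point of $x$ lies outside $\lambda/\mu$ exactly when that point lies on $L=E^kN^{n-k}$, i.e.\ on the initial East run or the final North run of $x$. So both endpoints of $x$ are always deleted, and one removes the whole first descending chain and the whole last ascending chain of $P_x$. This is not the initial North run and final East run you describe: already $x=L$, whose shade is empty, rules your version out.
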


\begin{proof}
    Applying Proposition \ref{prop:main_lattice_path}, we have 
    \[\mathcal{E}(M[U];t) = \sum_{x \in M[U]}\Omega(\mathsf{sh}(x);t)\]
    Notice that $\mathsf{sh}(x)$ is the fence poset obtained from $P_x$ by removing the leftmost element of $P_x$, all descent elements on the leftmost branch of $P_x$ and all ascent elements on the rightmost branch of $P_x$. Therefore, using Theorem \ref{thm: stat_fence_equiv}, \[ \Omega(\mathsf{sh}(x);t) = \frac{1}{|\mathsf{sh}(x)|!}\sum_{\sigma \in S_{|\mathsf{sh}(x)|}}t^{bl_{\mathsf{sh}(x)}(\sigma)}.\]
    Given a labeling of $P_x$, the relative labeling of the elements of the truncated poset don't depend of the labeling of the rest of the poset, therefore 
    \[\Omega(\mathsf{sh}(x);t) = \frac{1}{|\mathsf{sh}(x)|!}\sum_{\sigma \in S_{|\mathsf{sh}(x)|}}t^{bl_{\mathsf{sh}(x)}(\sigma)} = \frac{1}{(n+1)!}\sum_{\sigma \in S_{n+1}}  t^{TrBl_{P_x}(\sigma)}.\]
\end{proof}

If $U$ consists of $k$ North steps followed by $n-k$ East steps, we recover the uniform matroid $U_{k,n}$. Its base polytope is called the hypersimplex $\Delta_{k,n}$. 

\[\Delta_{k,n} = \{x \in [0, 1]^{n} | x_1 + x_2 + \dots + x_n = k\}.\]

In this case, the set of elements $U_{k,n}$ can be as the set of fence posets with exactly $k$ descents. Therefore we obtain as a corollary of Theorem \ref{thm:Ehrart_schubert} a combinatorial interpretation of the coefficients of the Ehrhart polynomial of the hypersimplex $\Delta_{k,n}$ answering an open question of Stanley \cite[Problem $3.6.2$]{EC2}.

\begin{proposition}
\label{prop:hypersimplex}
    The Ehrhart polynomial of the hyper-simplex $\Delta_{n, k}$ can be expressed as 
    \[
    \mathcal{E}(\Delta_{k,n}) = \frac{1}{(n+1)!}\sum_{P} \sum_{\sigma \in S_{n+1}} t^{TrBl_P(\sigma)}
    \]
where the first sum ranges over all the fence posets $P$ of size $n+1$ with exactly $k$ descents. 
\end{proposition}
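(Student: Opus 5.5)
The statement is a direct specialization of Theorem~\ref{thm:Ehrart_schubert}, so the plan is to identify the uniform matroid $U_{k,n}$ with the Schubert matroid $M[U]$, where $U$ consists of $k$ North steps followed by $n-k$ East steps. Its base polytope is $\Delta_{k,n}$, so $\mathcal{E}(\Delta_{k,n};t)=\mathcal{E}(M[U];t)$. Theorem~\ref{thm:Ehrart_schubert} then gives
\[
\mathcal{E}(\Delta_{k,n};t)=\frac{1}{(n+1)!}\sum_{x\in M[U]}\sum_{\sigma\in S_{n+1}}t^{TrBl_{P_x}(\sigma)}.
\]
It remains only to reindex the outer sum by fence posets.

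First, check that $M[U]$ really is the uniform matroid. The lower path $L$ is $n-k$ East steps followed by $k$ North steps, and the upper path $U$ is the reverse. The region between them is the full $(n-k)\times k$ rectangle, so every lattice path from $(0,0)$ to $(n-k,k)$ is admissible. The bases, namely the positions of the North steps, are therefore all $k$-subsets of $[n]$, which is $U_{k,n}$.

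Second, build the bijection $x\mapsto P_x$ from lattice paths in the rectangle to fence posets on $n+1$ elements with exactly $k$ descents. A path $x$ has $n$ steps, hence $n+1$ lattice points, which we take as the ground set of $P_x$ ordered along the path. Each East step becomes an ascent covering relation and each North step a descent covering relation, following the orientation convention already implicit in the definition of $P_x$ and of $\mathsf{sh}(x)$. A fence on $n+1$ elements is determined by its word of $n$ ascents and descents. So fences with exactly $k$ descents correspond bijectively to words with $k$ letters N and $n-k$ letters E, that is, to paths $x\in M[U]$. Substituting this bijection into the displayed sum gives the claimed formula.

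The main obstacle is bookkeeping rather than mathematics. One must make sure that the convention turning the path $x$ into $P_x$ sends North steps to descents, and not to ascents. If it is the other way round, the count of descents would be $n-k$ instead of $k$. In that case I would use the symmetry $\Delta_{k,n}\cong\Delta_{n-k,n}$ via $x\mapsto \mathbf{1}-x$, which preserves the Ehrhart polynomial, so the statement holds under either convention.
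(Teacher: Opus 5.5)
Your proposal is correct and follows the same route as the paper: the proposition is obtained by specializing Theorem~\ref{thm:Ehrart_schubert} to the path $U$ consisting of $k$ North steps followed by $n-k$ East steps, and then identifying the paths $x\in U_{k,n}$ with the fence posets on $n+1$ elements having exactly $k$ descents. Your use of $\Delta_{k,n}\cong\Delta_{n-k,n}$ to cover either step-to-cover orientation is a worthwhile addition, because the paper never fixes the orientation of $P_x$, and the West/North cell order it uses for skew shapes would in fact turn North steps into ascents.
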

\section{Proof of Theorem \ref{thm: stat_fence_equiv}}
\label{section:proof_fence}
We start with some useful lemmas. 
\begin{lemma}
\label{lem:c_1_to_desc}
    Let $P$ a fence poset of size $n$. Let $k \in \textnormal{asc}(P), \ell \in \textnormal{desc}(P)$ such that $0 \leq k < \ell < n$. Then 

    \[ (\ell-k)\cdot c_1(P_{[k, \ell-1]})=\sum_{\substack{ i \in \textnormal{desc}(P_{[k, \ell-1]})\\ k \leq i < \ell}}  c_{1}((P_{[k,\ell -1]})_{/i}) +  \mathbf{1}_{\ell = f(k)}.\]
\end{lemma}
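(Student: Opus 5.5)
The plan is to compute both sides directly from Lemma~\ref{lem:c1_fence}, which expresses $c_1$ of a fence purely in terms of its number of ascent and descent elements. Throughout, I read that lemma as $c_1(Q)=\frac{(r-1)!\,s!}{(r+s)!}$ with $r=|\textnormal{asc}(Q)|$ and $s=|\textnormal{desc}(Q)|$; the second ``asc'' in its statement is evidently a typo for ``desc''. I will set $Q:=P_{[k,\ell-1]}$, a fence of size $m=\ell-k$. Its leftmost element (position $k$) is an ascent by the convention defining $\textnormal{asc}$, so $r\ge 1$ and $r+s=m$. The ascent/descent status of every other element of $Q$ is the same as in $P$, since it depends only on the edge to its left, which lies inside $Q$.

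\emph{Case $\ell=f(k)$.} Here no element of $Q$ other than possibly the root is a descent. Since $k\in\textnormal{asc}(P)$, we get $s=0$ and $r=m$, so $Q$ is an ascending chain. The sum over $\textnormal{desc}(Q)$ is then empty, and Lemma~\ref{lem:c1_fence} gives $c_1(Q)=\frac{(m-1)!}{m!}=\frac1m$. Hence the left-hand side equals $1=\mathbf{1}_{\ell=f(k)}$, as required.

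\emph{Case $\ell\neq f(k)$.} Then $f(k)<\ell$, so $s\ge 1$ and the indicator vanishes. The key step is to identify the ascent/descent counts of $Q_{/i}$ for $i\in\textnormal{desc}(Q)$. Removing $i$ together with the edge on its left, and gluing the right neighbour $j$ of $i$ to the left neighbour of $i$ via the edge that previously joined $i$ and $j$, leaves the type of $j$ unchanged. The type of every other surviving element is also unchanged, and $i\neq k$ because $k$ is an ascent, so the root is not removed. I need to check this gluing convention against the definition of $P_{/i}$ (as in Figure~\ref{fig:example_P/i}); this is the only point requiring care. Granting it, $Q_{/i}$ is a fence with $r$ ascents and $s-1$ descents, so
\[
c_1(Q_{/i})=\frac{(r-1)!\,(s-1)!}{(r+s-1)!}
\]
independently of $i$. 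Summing over the $s$ descent elements gives
\[
s\cdot\frac{(r-1)!\,(s-1)!}{(r+s-1)!}=\frac{(r-1)!\,s!}{(r+s-1)!}=(r+s)\,c_1(Q)=(\ell-k)\,c_1(Q),
\]
which is the claimed identity.

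The main obstacle is thus not computational. It consists of verifying that deleting a descent vertex preserves the type of its right neighbour and never touches the root, so that $Q_{/i}$ has exactly $(r,s-1)$ ascent/descent counts; and of confirming that the condition $\ell=f(k)$ is equivalent to $\textnormal{desc}(Q)=\emptyset$.
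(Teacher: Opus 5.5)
Your proof is correct and follows the same route as the paper. Both split on whether $\textnormal{desc}(P_{[k,\ell-1]})$ is empty, which is equivalent to $\ell=f(k)$. In the nonempty case, both use Lemma~\ref{lem:c1_fence} to see that every $c_1((P_{[k,\ell-1]})_{/i})$ equals $\frac{(r-1)!\,(s-1)!}{(r+s-1)!}$, so summing over the $s$ descents gives $(\ell-k)\,c_1(P_{[k,\ell-1]})$. Your explicit check that deleting a descent keeps the type of its right neighbour (the same gluing the paper uses in Lemma~\ref{lem:rec_a} and in the circular case) justifies the $(r,s-1)$ count, and your factorial computation is cleaner than the paper's, whose printed version has index slips.
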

\begin{proof}
    Recall that from Lemma \ref{lem:c1_fence}, given a poset $P$, with $r$ ascent element and $s$ descent elements respects
    \[c_1(P) = \frac{(r-1)! \; (s)!}{|P|!}.\]
    If $|\textnormal{desc}(P_{[k,\ell-1]})| \geq 1$
    \begin{align*}
         \sum_{\substack{ i \in \textnormal{desc}(P)\\ k \leq i < \ell}} c_{1}((P_{[k,\ell -1]})_{/i}) &= \sum_{i \in \textnormal{desc}(P_{[k,\ell-1]})}\frac{(\textnormal{desc}(P_{[k, \ell-1]})-1)! \cdot \textnormal{asc}(P_{[k, \ell-1]})!}{(|P_{[k,\ell-1]}| -1)!}\\
         &= |P_{[k, \ell-1]}|\frac{\textnormal{desc}(P_{[k, \ell-1]})! \cdot \textnormal{asc}(P_{[k, \ell-1]}) !}{|P_{[k, \ell-1]}| !} = (\ell-k)\cdot c_1(P_{[k, \ell]})
    \end{align*}
    If $|\textnormal{desc}(P_{[k,\ell-1]})| = 0$, the sum is empty. \\
    By noticing that $|\textnormal{desc}(P_{[k,\ell-1]})| = 0$ if and  only if $\ell = f(k)$, we have 
    \[
    \sum_{\substack{ i \in \textnormal{desc}(P)\\ k \leq i < \ell}} a_{1}((P_{[k,\ell -1]})_{/i}) = \begin{cases}
        0 \quad \quad\quad \quad \quad \quad \quad \quad \, \textnormal{ if } \ell=f(k) \\
        (\ell - k )\cdot a_1(P_{[k,\ell-1]}) \quad \textnormal{ otherwise}
    \end{cases}.
    \]
    If $\ell = f(k)$, $c_1(P_{[k, \ell -1]}) = \frac{(\text{asc}(P_{[k, \ell]})-1)!}{|P_{k, \ell}|} = \frac{1}{(\ell - k)}$, and that in this case  $(\ell-k)\cdot c_1(P_{[k, \ell-1]}) = 1$. Hence the lemma holds. 
\end{proof}

\begin{lemma}
\label{lem:magic}
    Let $P$ be a fence poset of size $n$. 
    Let $0 \leq k < \ell \leq n$ $k \in \textnormal{asc}(P)$ and $\ell \in \textnormal{desc}(P) \cup \{ n\}$ and such that $f(k)  < \ell$, then 
    \[\sum_{\substack{i \in \textnormal{desc}(P) \\ f(i)=\ell}}c_1(P_{[k,i-1]}) + \sum_{\substack{i \in \textnormal{asc}(P), k\leq i \\ f(i) = f(k)}} c_1(P_{[i, \ell-1]}) = c_1(P_{[f(k), \ell-1]}) + \sum_{\substack{i \in \textnormal{asc}(P) \\ f(i) = \ell}} c_1(P_{[k, i-1]}). \]
\end{lemma}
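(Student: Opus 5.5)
The plan is to make every $c_1$ in the identity explicit with Lemma~\ref{lem:c1_fence}, and then to check the resulting identity by a telescoping argument in an integral representation. Lemma~\ref{lem:c1_fence} gives $c_1(Q)=\frac{(r-1)!\,s!}{(r+s)!}$ for a fence $Q$ with $r$ ascent elements and $s$ descent elements, where the leftmost element of $Q$ always counts as an ascent. This is a Beta integral:
\[
c_1(Q)=\int_0^1 x^{r-1}(1-x)^{s}\,dx .
\]
Every interval $P_{[a,b-1]}$ appearing in the statement is a sub-interval of $P_{[k,\ell-1]}$. So I will write $r,s$ for the ascent and descent counts of $P_{[k,\ell-1]}$ and express each term as $\int_0^1 x^{r-1-\alpha}(1-x)^{s-\beta}\,dx$ for explicit shifts $\alpha,\beta$. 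The lemma then becomes a polynomial identity between the integrands.

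\textbf{Step 1 (the index sets).} I will make the index sets explicit. Since $k\in\textnormal{asc}(P)$, the indices $i\ge k$ with $f(i)=f(k)$ are exactly the positions $k,k+1,\dots,f(k)-1$, all in $\textnormal{asc}(P)$. Put $a=f(k)-k$. Let $d$ be the last descent strictly before $\ell$; the hypothesis $f(k)<\ell$ gives $d\ge f(k)$. Then the indices with $f(i)=\ell$ are $d,d+1,\dots,\ell-1$. Among them, $d$ is the unique descent and the other $b-1$ are ascents, where $b=\ell-d$. Hence the first sum on the left is the single term $c_1(P_{[k,d-1]})$, and the sum on the right runs over $i=d+1,\dots,\ell-1$.

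\textbf{Step 2 (left $\leftrightarrow$ first right term).} Take $i=k+j$ with $0\le j<a$. Then $P_{[i,\ell-1]}$ has $r-j$ ascents and $s$ descents. The second left sum therefore equals
\[
\int_0^1 (1-x)^s\sum_{j=0}^{a-1}x^{r-1-j}\,dx=\int_0^1 x^{r-a}(1-x)^{s-1}(1-x^{a})\,dx .
\]
The interval $P_{[f(k),\ell-1]}$ starts at a descent that is reclassified as an ascent, so it has $r-a+1$ ascents and $s-1$ descents. Its $c_1$ is therefore $\int x^{r-a}(1-x)^{s-1}\,dx$. Consequently the second left sum equals $c_1(P_{[f(k),\ell-1]})-\int_0^1 x^{r}(1-x)^{s-1}\,dx$, and the lemma reduces to a statement about the right end only:
\[
c_1(P_{[k,d-1]})-\int_0^1 x^{r}(1-x)^{s-1}\,dx=\sum_{i=d+1}^{\ell-1}c_1(P_{[k,i-1]}).
\]

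\textbf{Step 3 (right end).} This is again a geometric-sum telescoping, this time over the final ascending run $d,\dots,\ell-1$. The interval $P_{[k,d-1]}$ has $r-b+1$ ascents and $s-1$ descents. The interval $P_{[k,d+m-1]}$ has $r-b+m$ ascents and $s$ descents. Summing $x^{r-b+m-1}(1-x)^s$ over $m$ and using $(1-x^{b})/(1-x)=\sum_{m<b}x^m$ should close the argument.

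\textbf{Main obstacle.} The delicate point is the bookkeeping at the boundaries. A naive count in Step~3 is off by exactly one term, $x^{r-1}(1-x)^{s}$, which is $c_1(P_{[k,\ell-1]})$. To resolve this, I will first pin down the precise conventions:
\begin{itemize}
\item whether $f(i)$ means the first descent strictly after $i$;
\item how the last interval behaves when $\ell=n$, using the convention $f(i)=|P|$;
\item which end of each interval is reclassified as an ascent;
\item whether $\ell$ itself, or $i=k$, is counted in the sums.
\end{itemize}
I will check these against a small fence, for instance the size-$4$ example of Figure~\ref{fig:all_labelings}. Once the conventions are fixed, the identity is a pure telescoping of geometric sums against $(1-x)^{s}$, and no further ideas are needed.
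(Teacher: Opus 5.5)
Your Steps 1--3 are correct, and they are the paper's argument in integral form. The paper also substitutes Lemma~\ref{lem:c1_fence} and telescopes with $\frac{(m-1)!\,s!}{(m+s)!}=\frac{(m-1)!\,(s-1)!}{(m+s-1)!}-\frac{m!\,(s-1)!}{(m+s)!}$, which is your identity $x^{m-1}(1-x)^{s}=x^{m-1}(1-x)^{s-1}-x^{m}(1-x)^{s-1}$ integrated over $[0,1]$.

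The gap is your last paragraph. The leftover $c_1(P_{[k,\ell-1]})$ is not a bookkeeping artifact that some choice of conventions will absorb. Under the conventions the paper fixes ($f(i)$ is the first descent strictly after $i$, $f(i)=n$ if there is none, and the leftmost element of an interval is an ascent), the identity as printed is false. Take the fence $z_0\prec z_1\succ z_2\prec z_3$ with $k=0$, $\ell=4$, so that $f(0)=2<\ell$. The left side is $c_1(P_{[0,1]})+c_1(P_{[0,3]})+c_1(P_{[1,3]})=\frac12+\frac1{12}+\frac16=\frac34$. The right side is $c_1(P_{[2,3]})+c_1(P_{[0,2]})=\frac12+\frac16=\frac23$. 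The difference is exactly $c_1(P_{[0,3]})$. The alternative reading $f(i)=i$ for descents fails on the same example. So ``once the conventions are fixed, no further ideas are needed'' does not finish the proof: you must decide which statement is true and which one is actually used.

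The correct statement has the strict condition $k<i$ in the second sum on the left; equivalently, add $c_1(P_{[k,\ell-1]})$ to the right-hand side. This is the form in which the lemma is used in the proof of Theorem~\ref{thm:fence_stat}. There the corresponding term comes from $\sum_{i'\in\mathrm{asc}(P),\,i'<k',\,f(i')\ge k'}$, and after the renaming $k:=i'$, $i:=k'$ this is the range $k<i<f(k)$.

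With $1\le j<a$ in your Step~2, the second sum equals $c_1(P_{[f(k),\ell-1]})-\int_0^1x^{r-1}(1-x)^{s-1}\,dx$. Your Step~3 gives $\sum_{i=d+1}^{\ell-1}c_1(P_{[k,i-1]})=c_1(P_{[k,d-1]})-\int_0^1x^{r-1}(1-x)^{s-1}\,dx$. So both sides equal $c_1(P_{[k,d-1]})+c_1(P_{[f(k),\ell-1]})-\int_0^1x^{r-1}(1-x)^{s-1}\,dx$, and the argument closes.

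For the record, the paper's own proof contains a compensating slip. Its right-hand sum $\sum_{i=0}^{y}$ has $y+1$ terms, although $\{i\in\mathrm{asc}(P): f(i)=\ell\}$ has only $y$ elements. The extra term $i=y$ is precisely $c_1(P_{[k,\ell-1]})$, which cancels the $i=k$ term on the left. So the paper's computation actually proves the corrected identity, not the printed one.
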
 

\begin{proof}
    Since $f(k)<\ell$, we can define $x$ as the number of ascent elements in the first ascending branch and $y$ as the length of the last ascending branch of $P_{[k, \ell-1]}$. $x$ and $y$ can be equal to zero. Also let $r$ be the number of ascent elements of $P_{[k, \ell-1]}$ that are neither in the first nor the last ascending branch. Let $s$ be the number of descent elements  in $P_{[k,\ell -1]}$. For example, in Figure~\ref{fig:lemme_magique_1}, $x= 4, y=4, r = 3, s = 6$. Remark that $x+y+ r+s = \ell -k$. \\
    Now, we can express the sums directly. Recall that a poset $P$, with $r$ ascent elements and $s$ descent elements, respects 
    \[c_1(P) = \frac{(r-1)! \; (s)!}{|P|!}.\] 
    Hence the left-hand side is 
    \[LHS = \frac{(x+r-1)! \ (s-1)!}{(x+r+s-1)!} + \sum_{i=1}^{x} \dfrac{(i+r+y-1)!(s)!}{(i+r+y+s)!}\]

    And the right-hand side is 
    \[RHS = \frac{(r+y)! (s-1)!}{(r+y+s)!} + \sum_{i=0}^{y} \frac{(x+r+i-1)! (s)!}{(x+r+i+s)!}\]

    Now, using that $\frac{(i+r+y-1)! (s)!}{(i+r+y+s)!} = \frac{(i+r+y-1)!(s-1)!}{(i+r+y+s-1)!} - \frac{(i+r+y)!(s-1)!}{(i+r+y+s)!}$, we have: 
    \begin{align*}
        LHS &=  \frac{(x+r-1)! \ (s-1)!}{(x+r+s-1)!} + \sum_{i=1}^{x} \Big( \frac{(i+r+y-1)!(s-1)!}{(i+r+y+s-1)!} - \frac{(i+r+y)!(s-1)!}{(i+r+y+s)!}\Big) \\
        &= \frac{(x+r-1)! \ (s-1)!}{(x+r+s-1)!} + \frac{(r+y)!(s-1)!}{(r+y+s)!} - \frac{(x+r+y)!(s-1)!}{(x+r+y+s)!} .
    \end{align*}
    And 
    \begin{align*}
        RHS &= \frac{(r+y)! (s-1)!}{(r+y+s)!} + \sum_{i=0}^{y} \Big( \frac{(x+r+i-1)!(s-1)!}{(x+r+i+s-1)!} - \frac{(x+r+i)!(s-1)!}{(x+r+i+s)!}\Big) \\
        &=  \frac{(r+y)! (s-1)!}{(r+y+s)!} + \frac{(x+r-1)!(s-1)!}{(x+r+s-1)!} - \frac{(x+r+y)!(s-1)!}{(x+r+y+s)!}.
    \end{align*}

    \begin{figure}[ht]
        \centering
        \includegraphics[width=0.3\linewidth]{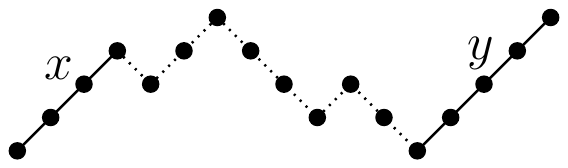}
        \caption{An exemple with $x= 4, y=4, r = 3, s = 6$.}
        \label{fig:lemme_magique_1}
    \end{figure}
\end{proof}

\begin{theorem}
    for all $P$ fence posets, $\Omega(P) = A(P)$
\end{theorem}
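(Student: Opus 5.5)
We argue by strong induction on $n=|P|$, taking both $\Omega$ and $A$ of the empty poset to be $1$. For $n=1$ both sides equal $t$. The two polynomials have zero constant term for $n\ge 1$ (every labeling has at least one block, and $\Omega(P;0)=0$). So it suffices to show that $\frac{d}{dt}A(P;t)$ satisfies the same recurrence as $\frac{d}{dt}\Omega(P;t)$ in Theorem \ref{thm:rec_omega}, with every proper subposet already known to satisfy $A=\Omega$. Equivalently, we show that $nA(P;t)$ and $n\Omega(P;t)$ satisfy the same linear recurrence. Our plan is to work with Lemma \ref{lem:rec_a} and to produce the matching recurrence for $\Omega$:
\[
n\,\Omega(P;t)=\sum_{i\in\mathrm{desc}(P)}\Omega(P_{/i};t)+\sum_{i\in\mathrm{asc}(P)}\Omega(P_{[0,i-1]};t)\,t\,\Omega(P_{[f(i),n-1]};t).
\]
Once this holds, the induction hypothesis identifies the right-hand side with that of Lemma \ref{lem:rec_a}, and the theorem follows.

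To prove this identity for $\Omega$, we compare derivatives, since both sides vanish at $t=0$. We differentiate the left side using Theorem \ref{thm:rec_omega}, which gives
\[
n\sum_{k,\ell}\Omega(P_{[0,k-1]})\,c_1(P_{[k,\ell-1]})\,\Omega(P_{[\ell,n-1]}).
\]
On the right side, we apply Theorem \ref{thm:rec_omega} to each $\Omega(P_{/i})$, and the product rule together with Theorem \ref{thm:rec_omega} to each factor of the ascent terms. Every resulting term has the shape $\Omega(\text{left piece})\cdot c_1(\text{middle interval})\cdot\Omega(\text{right piece})$, possibly multiplied by $t$. To compare the two sides, we expand the factor $n$ on the left as $k+(\ell-k)+(n-\ell)$. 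The $(\ell-k)$ part is handled by Lemma \ref{lem:c_1_to_desc}: it converts $(\ell-k)c_1(P_{[k,\ell-1]})$ into a sum over deletions of descents inside the middle block, plus the indicator $\mathbf 1_{\ell=f(k)}$. The indicator term $\Omega(P_{[0,k-1]})\Omega(P_{[f(k),n-1]})$ should match the derivative of the explicit factor $t$ in the ascent terms. The outer parts $k$ and $n-\ell$ will be handled by applying the induction hypothesis, in the form of the recurrence itself, to the smaller fences $P_{[0,k-1]}$ and $P_{[\ell,n-1]}$. This rewrites $k\,\Omega(P_{[0,k-1]})$ and $(n-\ell)\Omega(P_{[\ell,n-1]})$ as deletion and ascent terms, and these should match the terms of $\frac{d}{dt}\Omega(P_{/i})$ and the product-rule terms in which the deleted descent or the new ascent block lies outside the middle interval.

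The main obstacle is the boundary bookkeeping. When a deleted descent $i$ is adjacent to the middle interval, or when the ascent block $[i,f(i))$ straddles the left endpoint $k$ or the right endpoint $\ell$, the intervals $[k,\ell)$ of $P_{/i}$ and those of $P$ do not correspond cleanly, and $f$ changes under deletion. Lemma \ref{lem:magic} is designed for exactly this case. For fixed outer pieces $\Omega(P_{[0,k-1]})$ and $\Omega(P_{[\ell,n-1]})$, it equates the $c_1$-weights arising from descents $i$ with $f(i)=\ell$ and ascents $i$ with $f(i)=f(k)$ on one side with the weights $c_1(P_{[f(k),\ell-1]})$ and ascents $i$ with $f(i)=\ell$ on the other. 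We therefore plan to group all terms by the pair of outer pieces and check, case by case according to whether $\ell=f(k)$ or $f(k)<\ell$, that the leftover coefficients cancel via Lemma \ref{lem:magic}. We expect this regrouping to need the most care.
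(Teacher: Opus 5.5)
Your plan is correct and is essentially the paper's own proof. Like the paper, you argue by induction on $n$ and differentiate the recurrence of Lemma \ref{lem:rec_a} with $A=\Omega$ on smaller fences, expanding each derivative by Theorem \ref{thm:rec_omega}. You split $n=k+(\ell-k)+(n-\ell)$, using Lemma \ref{lem:c_1_to_desc} on the middle block and the recurrence on the outer pieces, match the indicator term with the derivative of the explicit $t$, and cancel the remaining $t$-terms, grouped by outer pieces, via Lemma \ref{lem:magic}. Your decision to treat $\ell=f(k)$ separately is sound: Lemma \ref{lem:magic} assumes $f(k)<\ell$, and in that case the leftover identity reduces instead to the symmetry $\sum_{k<i<\ell}\frac{1}{i-k}=\sum_{k<i<\ell}\frac{1}{\ell-i}$ along a single ascending run, which the paper leaves implicit.
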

\begin{proof}
    Assume that the equality holds for all fence posets of size $ \leq n-1$.

    \begin{equation}
    \label{eqn:main}
    \begin{aligned}
             \dfrac{d \;nA(P_{[0,n-1]})}{dt} &=& \sum_{i \in \ \text{asc}(P)} \dfrac{dA(P_{[0,i-1]})}{dt}\cdot t \cdot A(P_{[f(i),n-1]}) + \sum_{i \in \text{asc(P)}} A(P_{[0,i-1]})A(P_{[f(i),n-1]}) \\
             && + \sum_{i \in \text{asc}(P)}A(P_{[0,i-1]})\cdot t \cdot \dfrac{dA(P_{[f(i),n-1]})}{dt}+ \dfrac{d}{dt}\sum_{i \in \text{desc}(P)}A(P_{[0,n-1]/i}).  \\
    \end{aligned}
\end{equation}
\begin{small}
First, we have
    \begin{align*}
        &\sum_{i \in \ \text{asc}(P)} \dfrac{dA(P_{[0,i-1]})}{dt}\cdot t \cdot A(P_{[f(i),n-1]}) \\      
        =&\sum_{i \in \ \text{asc}(P)} \dfrac{d\Omega(P_{[0,i-1]})}{dt}\cdot t \cdot \Omega(P_{[f(i),n-1]}) \quad \quad ( \textnormal{by the induction hypothesis)} \\
        =&  \sum_{i \in \ \text{asc}(P)} \sum_{\substack{0 \leq k < \ell \leq i \\ k \in \text{asc}(P), \;\ell \in \text{desc}(P) \cup \{i\}}} \Omega(P_{[0,k-1]})c_1(P_{[k,\ell-1]})\Omega(P_{[\ell,i-1]})\cdot t \cdot \Omega(P_{[f(i),n-1]})  \quad \quad (\textnormal{using Theorem \ref{thm:rec_omega})} \\
        =& \sum_{\substack{0 \leq k <i \\ k \in \textnormal{asc}(P), i \in \text{asc}(P)}} \Omega(P_{[0,k-1]})c_1(P_{[k,i-1]})\cdot 1\cdot t \cdot \Omega(P_{[f(i),n-1]}) \\
        +& \sum_{\substack{0 \leq k < \ell \leq n \\ k \in \text{asc}(P), \;\ell \in \text{desc}(P)}} \Omega(P_{[0,k-1]})\cdot c_1(P_{[k,l-1]}) \cdot \sum_{\substack{i \in \text{asc(P)}\\ \ell\ \leq i}}\Omega(P_{[\ell,i-1]})\cdot t\cdot \Omega(P_{[f(i),n-1]}) \quad \quad (\textnormal{decomposing regarding whether } x < k \textnormal{ or not}) \\
        =& \sum_{\substack{0 \leq k <i \\ k \in \textnormal{asc}(P), i \in \text{asc}(P)}} \Omega(P_{[0,k-1]})c_1(P_{[k,i-1]})\cdot 1\cdot t \cdot \Omega(P_{[f(i),n-1]}) \\ 
        +&\sum_{\substack{0 \leq k < \ell \leq n \\k \in \text{asc}, \;\ell \in \text{desc}}} \Omega(P_{[0,k-1]})c_1(P_{[k,\ell-1]}) \Big((n-\ell)\Omega(P_{[\ell,n-1]}) - t\cdot\Omega(P_{[f(\ell), n-1]})-\sum_{i \in \text{desc}(P_{[\ell ,n-1]})} \Omega(P_{[\ell,n-1]/i})\Big) \quad (\textnormal{using Theorem \ref{lem:rec_a})} \\
        =& \sum_{\substack{0 \leq k <i \\ k \in \textnormal{asc}(P), i \in \text{asc}(P)}} \Omega(P_{[0,k-1]})c_1(P_{[k,i-1]})\cdot t \cdot \Omega(P_{[f(i),n-1]}) + \frac{d \; n\Omega(P_{[0, n-1]})}{dt} \\
        +& \sum_{\substack{0 \leq k < \ell \leq n \\k \in \text{asc}, \;\ell \in \text{desc}}} \Omega(P_{[0,k-1]})\cdot c_1(P_{[k,\ell-1]}) \cdot \Big((-\ell)\Omega(P_{[\ell,n-1]}) -t\cdot\Omega(P_{[f(\ell), n-1]})-\sum_{i \in \text{desc}(P_{[\ell ,n-1]})} \Omega(P_{[\ell,n-1]/i})\Big) \quad (\textnormal{using Theorem \ref{thm:rec_omega}}).
    \end{align*}

In second, we have 
    \begin{eqnarray*}
        &&\sum_{i \in \text{asc}(P)}A(P_{[0,i-1]})\cdot t \cdot \dfrac{dA(P_{[f(i),n-1]})}{dt} \\
        &=& \sum_{i \in \text{asc}(P)}\Omega(P_{[0,i-1]})\cdot t \cdot \dfrac{d\Omega(P_{[f(i),n-1]})}{dt} \quad \quad (\textnormal{using the induction hypothesis)}\\
        &=& \sum_{i \in \text{asc}(P)}\Omega(P_{[0,i-1]})\cdot t \cdot \sum_{\substack{f(i) \leq k < \ell \leq n \\ k \in \text{asc}(P)\cup\{f(i+1\}, \ell \in \text{desc}(P) \cup \{n+1\}}} \Omega(P_{[f(i), k-1]})\cdot c_1(P_{[k,\ell-1]}) \cdot  \Omega(P_{[\ell, n-1]}) \quad \quad (\textnormal{using Theorem \ref{thm:rec_omega}} )\\
        &=&\sum_{\substack{0 \leq k < \ell \leq n\\ k \in \text{asc}(P), \;\ell \in \text{desc}(P) \cup \{ n+1\}}} \Omega(P_{[\ell,n-1]})\cdot c_1(P_{[k,\ell-1]}) \cdot \sum_{\substack{i \in asc(P) \\ f(i) < k}} \Omega(P_{[0,i-1]})\cdot t \cdot \Omega(P_{[f(i), k-1]}) \\
        &+& \sum_{i\in \textnormal{asc}(P)} \Omega(P_{[0,i-1]})\cdot t \cdot \sum_{\substack{f(i) < \ell \leq n \\ \ell \in \textnormal{desc}(P) \cup \{n+1\}}} 1 \cdot c_1(P_{[f(i), \ell -1]}) \cdot \Omega(P_{[\ell, n-1]}) \quad \quad (\textnormal{decomposing whether } f(i) = k  \textnormal{ or not})\\
        &=& \sum_{\substack{0 \leq k < \ell \leq n\\ k \in \text{asc}(P),\; \ell \in \text{desc}(P) \cup \{n\} }} \Omega(P_{[\ell,n-1]})\cdot c_1(P_{[k,\ell-1]}) \cdot \Big( k\Omega(P_{[0,k-1]} - \sum_{\substack{i \in \text{desc}(P) \\ i \leq k}} \Omega_{[0,k-1]/i}-\sum_{\substack{i \in \textnormal{asc}{P}\\ i<k, \;f(i)\geq k}}t \cdot \Omega(P_{[0,i-1]})\Big) \\
        &+& \sum_{i\in \textnormal{asc}(P)} \Omega(P_{[0,i-1]})\cdot t \cdot \sum_{\substack{f(i) < \ell \leq n \\ \ell \in \textnormal{desc}(P) \cup \{n\}}} 1 \cdot c_1(P_{[f(i), \ell -1]}) \cdot \Omega(P_{[\ell, n-1]}) \quad \quad (\textnormal{using Theorem \ref{lem:rec_a} and the induction hypothesis}).
    \end{eqnarray*}

In third, we have 
    \begin{align*}
        &\dfrac{d}{dt}\sum_{i \in \text{desc}(P)}A(P_{[0,n-1]/i}) \\
        =& \dfrac{d}{dt}\sum_{i \in \text{desc}(P)}\Omega(P_{[0,n-1]/i})  \quad \quad (\textnormal{by the induction hypothesis)}\\
        =&\sum_{i \in \text{desc}(P)} \dfrac{d}{dt}\Omega(P_{[0,n-1]/i}) 
    \\
    =& \sum_{i \in \text{desc}(P)} \sum_{\substack{0 \leq k < \ell \leq n-1 \\ k \in \text{asc}(P_{/i}), \;\ell \in \text{desc}(P_{/i}) \cup \{n-1\}}} \Omega((P_{/i})_{[0,k-1]})\cdot c_1((P_{/i})_{[k, \ell-1]})\cdot \Omega((P_{/i})_{[\ell,n-2]}) \quad \quad (\textnormal{using Theorem \ref{thm:rec_omega}})\\
    =& \sum_{\substack{0 \leq k < \ell \leq n \\ k \in \text{asc}(P), \;\ell \in \text{desc}(P) \cup \{n\}}} \sum_{\substack{i  \in \text{desc}(P) \\ i < k} }\Omega((P_{[0,k-1]})_{/i})\cdot c_1(P_{[k, l-1]})\cdot \Omega(P_{[\ell,n-1]}) \\
    +& \sum_{\substack{0 \leq k < \ell \leq n-1 \\ k \in \text{asc}(P), \;\ell \in \text{desc}(P)\cup \{n\}}} \sum_{\substack{i  \in \text{desc}(P) \\  \ell < i} }\Omega(P_{[0,k-1]})\cdot c_1(P_{[k, l-1]})\cdot \Omega((P_{[\ell,n-1]})_{/i}) \\
    +& \sum_{\substack{0 \leq k < \ell \leq n-1 \\ k \in \text{asc}(P), \;\ell \in \text{desc}(P)\cup \{n\}}} \sum_{\substack{ i \in \textnormal{desc}(P_{[k, \ell-1]}) \\ k \leq i < \ell} }\Omega(P_{[0,k-1]})\cdot c_1((P_{[k, \ell-1]})_{/i})\cdot \Omega(P_{[\ell,n-1]}) \quad \quad (\textnormal{decomposing according the value of } i).
    \end{align*}

Moreover,
\begin{align*}
    &\sum_{\substack{0 \leq k < n \leq n \\ k \in \textnormal{asc}(P), \ell \in \textnormal{desc(P)} \cup \{ n\}}} (k-l) \cdot \Omega(P_{[0, k-1]})\cdot c_1(P_{[k, \ell-1]}) \cdot \Omega(P_{[\ell, n-1]}) \\
    =& \sum_{\substack{0 \leq k < n \leq n \\ k \in \textnormal{asc}(P), \ell \in \textnormal{desc(P)} \cup \{ n\}}} \Omega(P_{[0, k-1]}) \cdot \Big( \sum_{\substack{ i \in \textnormal{desc}(P_{[k, \ell-1]})\\ k \leq i < \ell}} a_{1}((P_{[k,\ell -1]})_{/i}) +  \mathbf{1}_{\ell = f(k)}\Big) \cdot \Omega(P_{[\ell, n-1]}) \quad \quad (\textnormal{using Lemma \ref{lem:c_1_to_desc}})\\
    =& \sum_{\substack{0 \leq k < n \leq n \\ k \in \textnormal{asc}(P), \ell \in \textnormal{desc(P)} \cup \{ n\}}} \sum_{\substack{ i \in \textnormal{desc}(P_{[k, \ell-1]})\\ k \leq i < \ell}} \Omega(P_{[0, k-1]})\cdot a_{1}((P_{[k,\ell -1]})_{/i})\cdot \Omega(P_{[\ell, n-1]})
    + \sum_{k \in \textnormal{asc}(P)}\Omega(P_{[0, k-1]})\cdot 1 \cdot \Omega(P_{[f(k), n-1]})
\end{align*}
\end{small}
Adding the terms of \eqref{eqn:main}, the majority of the terms cancel, and at the end we obtain: 

\begin{align*}
    &\dfrac{d \; nA(P_{[0,n-1]})}{dt} \\
    =& \sum_{\substack{0 \leq k <i \\ k \in \textnormal{asc}(P), i \in \text{asc}(P)}} \Omega(P_{[0,k-1]})c_1(P_{[k,i-1]})\cdot t \cdot \Omega(P_{[f(i),n-1]}) +  \frac{d \; n\Omega(P_{[0, n-1]})}{dt} \\
    -& \sum_{\substack{0 \leq k < \ell \leq n \\k \in \text{asc}(P), \;\ell \in \text{desc}(P)}} \Omega(P_{[0,k-1]})\cdot c_1(P_{[k,\ell-1]}) \cdot t\cdot\Omega(P_{[f(\ell), n-1]})\\
    -&\sum_{\substack{0 \leq k < \ell \leq n\\ k \in \text{asc}(P),\; \ell \in \text{desc}(P) \cup \{n\} }} \Omega(P_{[\ell,n-1]})\cdot c_1(P_{[k,\ell-1]}) \cdot \sum_{\substack{i \in \textnormal{asc}{P}\\ i<k, \;f(i)\geq k}}t \cdot \Omega(P_{[0,i-1]}) \\
    +& \sum_{i\in \textnormal{asc}(P)} \Omega(P_{[0,i-1]})\cdot t \cdot \sum_{\substack{f(i) < \ell \leq n \\ \ell \in \textnormal{desc}(P) \cup \{n\}}} c_1(P_{[f(i), \ell -1]}) \cdot \Omega(P_{[\ell, n-1]})
\end{align*}

To end the proof, we just have to prove that the sum of the latter $4$ terms is $0$. In order to do so, we will have to make a subtle change of variable. 
\begin{small}
\begin{itemize}
    \item By setting $\ell =: f(i)$
    \begin{eqnarray*}
        \hspace{-2cm} \sum_{\substack{0 \leq k <i \\ k \in \textnormal{asc}(P), i \in \text{asc}(P)}} \Omega(P_{[0,k-1]})c_1(P_{[k,i-1]})\cdot t \cdot \Omega(P_{[f(i),n-1]}) = \sum_{\substack{k \in \textnormal{asc}(P),\ell \in \textnormal{desc}(P) \cup \{ n\}  \\ 0 \leq k < \ell \leq n}} t \cdot \Omega(P_{[0, k-1]})\cdot \Omega(P_{[\ell, n-1]}) \sum_{i\in \textnormal{asc}(P), f(i)=\ell} c_1(P_{[k, i-1]})
    \end{eqnarray*}

    \item By setting $\ell := f(\ell')$
    \begin{equation*}
       \hspace{-2cm} \sum_{\substack{0 \leq k < \ell' \leq n \\k \in \text{asc}, \;\ell' \in \text{desc}}} \Omega(P_{[0,k-1]})\cdot c_1(P_{[k,\ell'-1]}) \cdot t\cdot\Omega(P_{[f(\ell'), n-1]}) = \sum_{\substack{k \in \textnormal{asc}(P),\ell \in \textnormal{desc}(P) \cup \{ n\}  \\ 0 \leq k < \ell \leq n}} t \cdot \Omega(P_{[0, k-1]})\cdot \Omega(P_{[\ell, n-1]}) \sum_{i\in \textnormal{desc}(P), f(i)=\ell} c_1(P_{[k, i-1]})
    \end{equation*}

    \item By renaming $k := i'$ and $i:=k'$
    \[
       \hspace{-2cm} \sum_{\substack{0 \leq k' < \ell \leq n\\ k' \in \text{asc}(P),\; \ell \in \text{desc}(P) \cup \{n\} }} \Omega(P_{[\ell,n-1]})\cdot c_1(P_{[k',\ell-1]}) \cdot \sum_{\substack{i' \in \textnormal{asc}{P}\\ i'<k, \;f(i')\geq k'}}t \cdot \Omega(P_{[0,i'-1]}) = \sum_{\substack{k \in \textnormal{asc}(P),\ell \in \textnormal{desc}(P) \cup \{ n\}  \\ 0 \leq k < \ell \leq n}} t \cdot \Omega(P_{[0, k-1]})\cdot \Omega(P_{[\ell, n-1]}) \sum_{\substack{i \in \textnormal{asc}(P)\\ f(i)=f(k)}} c_1(i,\ell-1)
    \]

    \item By renaming $k := i$ and notincing that 
    \begin{equation*}
       \hspace{-2cm}  \sum_{i\in \textnormal{asc}(P)} \Omega(P_{[0,i-1]})\cdot t \cdot \sum_{\substack{f(i) < \ell \leq n \\ \ell \in \textnormal{desc}(P) \cup \{n\}}}  c_1(P_{[f(i), \ell -1]}) \cdot \Omega(P_{[\ell, n-1]}) =  \sum_{\substack{k \in \textnormal{asc}(P),\ell \in \textnormal{desc}(P) \cup \{ n\}  \\ 0 \leq k < \ell \leq n}} t \cdot \Omega(P_{[0, k-1]})\cdot \Omega(P_{[\ell, n-1]}) c_1(P_{[f(k), \ell-1]})
    \end{equation*}
\end{itemize}
We conclude using Lemma \ref{lem:magic}
\end{small}

\end{proof}

\section*{Acknowledgments} The author thanks Alejandro Morales for pointing out the conjecture in \cite{ferroni2025skew} and for helpful comments and suggestions. The author also thanks Marni Mishna and Félix Reutenauer for their helpful comments. The author was partially supported by the NSERC Discovery grant RGPIN-2024-06246.

\printbibliography

\end{document}